\documentclass[hidelinks,onefignum,onetabnum]{siamart251216}

\usepackage{babel}
\usepackage[T1]{fontenc}
\usepackage[utf8]{inputenc}
\usepackage{siunitx}
\usepackage[dvipsnames,table]{xcolor}
\usepackage{amsmath}
\usepackage{amsfonts}
\usepackage{amssymb}
\usepackage{mathtools}
\usepackage{booktabs}
\usepackage{multirow}
\usepackage{enumitem}
\usepackage{mathdots}
\usepackage{nicematrix}
\usepackage{tikz}
\setlist[enumerate,1]{label=(\roman*)}

\newcommand{\MyCite}[1]{\cite{#1}}
\newcommand{\MyCitet}[2]{{#1}~\cite{#2}}

\newcommand{\MyQED}{}
\newcommand{\AppendixHeader}{}

\usepackage{orcidlink}

\newcommand{\MacroColor}[1]{#1}
\newcommand{\SATMacroColor}[1]{#1}
\newcommand{\QSATMacroColor}[1]{#1}

\newcommand{\EtAl}{\textit{et al.}}

\newcommand{\NonNegativeIntegers}{\mathbb{Z}_{\ge 0}}
\newcommand{\PositiveIntegers}{\mathbb{Z}_{> 0}}

\newcommand{\ZeroVector}{\mathbf{0}}
\newcommand{\OneVector}{\mathbf{1}}

\DeclareMathOperator*{\Argmin}{argmin}

\newcommand{\OptValFunc}{\MacroColor{\mathcal{V}}}
\newcommand{\OptSolSet}{\MacroColor{\mathcal{S}}}
\newcommand{\FeasSolSet}{\MacroColor{\mathcal{F}}}

\newcommand{\ComplexityClassWrapper}[1]{\MacroColor{\ensuremath{#1}}}
\newcommand{\ComplexityClassText}[1]{\textup{#1}}

\newcommand{\ClassP}{\ComplexityClassWrapper{\ComplexityClassText{P}}}
\newcommand{\ClassNP}{\ComplexityClassWrapper{\ComplexityClassText{NP}}}
\newcommand{\ClassCoNP}{\ComplexityClassWrapper{\ComplexityClassText{coNP}}}
\newcommand{\ClassDP}{\ComplexityClassWrapper{\ComplexityClassText{DP}}}
\newcommand{\ClassPiP}[1]{\ComplexityClassWrapper{\Pi^{p}_{#1}}}
\newcommand{\ClassSigmaP}[1]{\ComplexityClassWrapper{\Sigma^{p}_{#1}}}
\newcommand{\ClassDeltaP}[1]{\ComplexityClassWrapper{\Delta^{p}_{#1}}}

\newcommand{\ClassFDeltaP}[1]{\ComplexityClassWrapper{\ComplexityClassText{F}\Delta^{p}_{#1}}}
\newcommand{\ClassFThetaP}[1]{\ComplexityClassWrapper{\ComplexityClassText{F}\Theta^{p}_{#1}}}

\newcommand{\ComplexityProblemWrapper}[1]{\MacroColor{\ensuremath{#1}}}
\newcommand{\ComplexityProblemText}[1]{\textup{#1}}

\newcommand{\DecisionProblemThreeSAT}{\ComplexityProblemWrapper{\ComplexityProblemText{SAT}}}
\newcommand{\SearchProblemThreeSAT}{\ComplexityProblemWrapper{\ComplexityProblemText{LEXSAT}}}
\newcommand{\DecisionProblemLastBitSAT}{\ComplexityProblemWrapper{\ComplexityProblemText{BITSAT}}}

\newcommand{\DecisionProblemQThreeSAT}[1]{\ComplexityProblemWrapper{#1\ComplexityProblemText{-SAT}}}
\newcommand{\SearchProblemQThreeSAT}[1]{\ComplexityProblemWrapper{#1\ComplexityProblemText{-LEXSAT}}}
\newcommand{\DecisionProblemLastBitQSAT}[1]{\ComplexityProblemWrapper{#1\ComplexityProblemText{-BITSAT}}}

\newcommand{\EqtagDecisionProblemOnKLP}[2]{\ComplexityProblemWrapper{#1\ComplexityProblemText{-#2}}}

\newcommand{\DecisionProblemModifierCompNoLink}[1]{\ensuremath{\overline{#1}_{\neg\text{link}}}}
\newcommand{\DecisionProblemModifierComp}[1]{\ensuremath{\overline{#1}}}

\newcommand{\DecisionProblemModifierCompBoxedPoly}[1]{\ensuremath{\ensuremath{{}^{\text{unary}}}\overline{#1}^{\le 1}}}

\newcommand{\DecisionProblemModifierNoLinkBoxed}[1]{#1\ensuremath{_{\neg\text{link}}^{\le 1}}}
\newcommand{\DecisionProblemModifierBoxed}[1]{#1\ensuremath{^{\le 1}}}
\newcommand{\DecisionProblemModifierNoLink}[1]{#1\ensuremath{_{\neg\text{link}}}}

\newcommand{\DecisionProblemModifierNoLinkBoxedPoly}[1]{\ensuremath{{}^{\text{unary}}}#1\ensuremath{_{\neg\text{link}}^{\le 1}}}
\newcommand{\DecisionProblemModifierBoxedPoly}[1]{\ensuremath{{}^{\text{unary}}}#1\ensuremath{^{\le 1}}}
\newcommand{\DecisionProblemModifierNoLinkPoly}[1]{\ensuremath{{}^{\text{unary}}}#1\ensuremath{_{\neg\text{link}}}}
\newcommand{\DecisionProblemModifierPoly}[1]{\ensuremath{{}^{\text{unary}}}#1}

\newcommand{\DecisionProblemFeas}[1]{\hyperref[LabelDecisionProblemFeas]{\EqtagDecisionProblemOnKLP{#1}{FEAS}}}
\newcommand{\DecisionProblemVal}[1]{\hyperref[LabelDecisionProblemVal]{\EqtagDecisionProblemOnKLP{#1}{VAL}}}
\newcommand{\DecisionProblemUnb}[1]{\hyperref[LabelDecisionProblemUnb]{\EqtagDecisionProblemOnKLP{#1}{UNB}}}
\newcommand{\DecisionProblemVerify}[1]{\hyperref[LabelDecisionProblemVerify]{\EqtagDecisionProblemOnKLP{#1}{VERIFY}}}
\newcommand{\DecisionProblemAttain}[1]{\hyperref[LabelDecisionProblemAttain]{\EqtagDecisionProblemOnKLP{#1}{ATTAIN}}}
\newcommand{\SearchProblemObj}[1]{\hyperref[LabelSearchProblemObj]{\EqtagDecisionProblemOnKLP{#1}{SEARCH}}}

\newcommand{\KLPInstances}[1]{\ensuremath{#1\textup{-LP}}}

\newcommand{\NVariables}{\MacroColor{n}}
\newcommand{\NConstraints}{\MacroColor{m}}

\newcommand{\SequenceIndex}{\MacroColor{\beta}}

\newcommand{\PXSATNVariables}{\SATMacroColor{n}}
\newcommand{\PXSATBooleanFormula}{\SATMacroColor{F}}
\newcommand{\PXSATBFSet}{\SATMacroColor{\mathcal{B}}}
\newcommand{\PXSATLexMaxSolution}{\SATMacroColor{M}}
\newcommand{\PXSATTruthAssignment}{\SATMacroColor{s}}
\newcommand{\PXSATObjVariable}{\SATMacroColor{t}}
\newcommand{\PXSATAuxVarOne}{\SATMacroColor{u}}
\newcommand{\PXSATAuxVarTwo}{\SATMacroColor{v}}
\newcommand{\PXSATOneIndicator}{\SATMacroColor{g}}
\newcommand{\PXSATOneIndicatorExpanded}{\SATMacroColor{h}}
\newcommand{\PXSATCostCoefOne}{\SATMacroColor{c}}
\newcommand{\PXSATCostCoefTwo}{\SATMacroColor{d}}
\newcommand{\PXSATQBFConstraintCoef}{\SATMacroColor{E}}
\newcommand{\PXSATQBFConstraintRHS}{\SATMacroColor{e}}
\newcommand{\PXSATObjective}{\SATMacroColor{1 - \PXSATObjVariable + (\PXSATCostCoefOne^{\PXSATBooleanFormula})^{\top} \PXSATAuxVarTwo}}
\newcommand{\PXSATYesObjVal}{\SATMacroColor{0}}
\newcommand{\PXSATNoObjVal}{\SATMacroColor{1}}

\newcommand{\PXQSATNVariables}{\QSATMacroColor{n}}
\newcommand{\PXQSATBooleanFormula}{\QSATMacroColor{F}}
\newcommand{\PXQSATBoolVariable}{\QSATMacroColor{\mu}}
\newcommand{\PXQSATQBF}{\QSATMacroColor{H}}
\newcommand{\PXQSATQBFSet}[1]{\QSATMacroColor{\mathcal{B}_{#1}}}
\newcommand{\PXQSATQBFLexMaxSolution}{\QSATMacroColor{M}}
\newcommand{\PXQSATTruthAssignment}{\QSATMacroColor{s}}
\newcommand{\PXQSATObjVariable}{\QSATMacroColor{t}}
\newcommand{\PXQSATAuxVarOne}{\QSATMacroColor{p}}
\newcommand{\PXQSATAuxVarPen}{\QSATMacroColor{q}}
\newcommand{\PXQSATAuxVarTwo}{\QSATMacroColor{r}}
\newcommand{\PXQSATCostCoefOne}{\QSATMacroColor{c}}
\newcommand{\PXQSATCostCoefTwo}{\QSATMacroColor{d}}
\newcommand{\PXQSATQBFConstraintCoef}{\QSATMacroColor{E}}
\newcommand{\PXQSATQBFConstraintRHS}{\QSATMacroColor{e}}
\newcommand{\PXQSATX}{\QSATMacroColor{x}}
\newcommand{\PXQSATY}{\QSATMacroColor{y}}
\newcommand{\PXQSATIsTrue}{\QSATMacroColor{is true}}
\newcommand{\PXQSATIsNotTrue}{\QSATMacroColor{is not true}}
\newcommand{\PXQSATYesObjVal}{\QSATMacroColor{0}}
\newcommand{\PXQSATNoObjVal}{\QSATMacroColor{1}}

\newcommand{\EqtagGeneralKLP}[3]{\MacroColor{\ensuremath{\mathrm{LP}_{\ensuremath{#1}}^{\ensuremath{#2}}\ensuremath{#3}}}}
\newcommand{\EqrefGeneralKLP}[3]{\textup{(}\text{\hyperref[LabelGeneralKLP]{$\EqtagGeneralKLP{#1}{#2}{#3}$}\textup{)}}}

\newcommand{\EqtagNewQLPOriginal}[3]{\MacroColor{\ensuremath{\mathrm{ELP}_{#1}^{#2}#3}}}
\newcommand{\EqrefNewQLPOriginal}[3]{\textup{(}\text{\hyperref[eq:QLPOriginal]{$\EqtagNewQLPOriginal{#1}{#2}{#3}$}\textup{)}}}

\newcommand{\EqtagNewQLPSearch}[3]{\MacroColor{\ensuremath{\mathrm{FLP}_{#1}^{#2}#3}}}
\newcommand{\EqrefNewQLPSearch}[3]{\textup{(}\text{\hyperref[eq:QLPSearch]{$\EqtagNewQLPSearch{#1}{#2}{#3}$}\textup{)}}}

\newcommand{\EqtagHardnessOfAttain}[3]{\MacroColor{\ensuremath{\mathrm{GLP}_{\ensuremath{#1}}^{\ensuremath{#2}}\ensuremath{#3}}}}
\newcommand{\EqrefHardnessOfAttain}[3]{\textup{(}\text{\hyperref[eq:LabelHardnessOfAttain]{$\EqtagHardnessOfAttain{#1}{#2}{#3}$}\textup{)}}}

\newcommand{\EqtagHardnessOfAttainDecoupled}[3]{\MacroColor{\ensuremath{\widehat{\mathrm{GLP}}_{\ensuremath{#1}}^{\ensuremath{#2}}\ensuremath{#3}}}}
\newcommand{\EqrefHardnessOfAttainDecoupled}[3]{\textup{(}\text{\hyperref[eq:LabelHardnessOfAttainDecoupled]{$\EqtagHardnessOfAttainDecoupled{#1}{#2}{#3}$}\textup{)}}}

\newcommand{\EqtagHardnessOfAttainabilityLemmaInstance}[3]{\MacroColor{\ensuremath{\mathrm{GLP}_{\ensuremath{#1}}^{\ensuremath{#2}}\ensuremath{#3}}}}
\newcommand{\EqrefHardnessOfAttainabilityLemmaInstance}[3]{\textup{(}\text{\hyperref[eq:HardnessOfAttainabilityLemmaInstance]{$\EqtagHardnessOfAttainabilityLemmaInstance{#1}{#2}{#3}$}\textup{)}}}

\newcommand{\EqtagHardFeas}[3]{\MacroColor{\ensuremath{\mathrm{HLP}_{#1}^{#2}#3}}}
\newcommand{\EqrefHardFeas}[3]{\textup{(}\text{\hyperref[eq:HardFeas]{$\EqtagHardFeas{#1}{#2}{#3}$}\textup{)}}}

\newcommand{\EqtagHardFeasDecoupled}[3]{\MacroColor{\ensuremath{\widehat{\mathrm{HLP}}_{#1}^{#2}#3}}}
\newcommand{\EqrefHardFeasDecoupled}[3]{\textup{(}\text{\hyperref[eq:HardFeasDecoupled]{$\EqtagHardFeasDecoupled{#1}{#2}{#3}$}\textup{)}}}

\newcommand{\EqtagHardFeasDecoupledTwo}[3]{\MacroColor{\ensuremath{\overline{\mathrm{HLP}}_{#1}^{#2}#3}}}
\newcommand{\EqrefHardFeasDecoupledTwo}[3]{\textup{(}\text{\hyperref[eq:HardFeasDecoupledTwo]{$\EqtagHardFeasDecoupledTwo{#1}{#2}{#3}$}\textup{)}}}

\newcommand{\EqtagJeroslowPenaltyBefore}[3]{\MacroColor{\ensuremath{\mathrm{J}_{\ensuremath{#1}}^{\ensuremath{#2}}(\ensuremath{#3})}}}
\newcommand{\EqrefJeroslowPenaltyBefore}[3]{\textup{(}\text{\hyperref[LabelJeroslowPenaltyBefore]{$\EqtagJeroslowPenaltyBefore{#1}{#2}{#3}$}\textup{)}}}

\newcommand{\EqtagJeroslowPenaltyAfter}[3]{\MacroColor{\ensuremath{\overline{\mathrm{J}}_{\ensuremath{#1}}^{\ensuremath{#2}}(\ensuremath{#3})}}}
\newcommand{\EqrefJeroslowPenaltyAfter}[3]{\textup{(}\text{\hyperref[LabelJeroslowPenaltyAfter]{$\EqtagJeroslowPenaltyAfter{#1}{#2}{#3}$}\textup{)}}}

\newcommand{\EqtagQThreeSATReducedToKLPLeadersProblem}[3]{\MacroColor{\ensuremath{\textup{QLP}_{\ensuremath{#1}}^{\ensuremath{#2}}\ensuremath{#3}}}}
\newcommand{\EqtagQThreeSATReducedToKLPFollowersProblem}[3]{\MacroColor{\ensuremath{\textup{QLP}_{\ensuremath{#1}}^{\ensuremath{#2}}\ensuremath{#3}}}}

\newcommand{\EqrefQThreeSATReducedToKLPLeadersProblem}[3]{\textup{(}\text{\hyperref[eq:LabelQThreeSATReducedToKLPLeadersProblem]{$\EqtagQThreeSATReducedToKLPLeadersProblem{#1}{#2}{#3}$}\textup{)}}}
\newcommand{\EqrefQThreeSATReducedToKLPFollowersProblem}[3]{\textup{(}\text{\hyperref[eq:LabelQThreeSATReducedToKLPLeadersProblem]{$\EqtagQThreeSATReducedToKLPFollowersProblem{#1}{#2}{#3}$}\textup{)}}}

\newcommand{\EqtagExampleOne}[3]{\MacroColor{\ensuremath{\mathrm{ALP}_{\ensuremath{#1}}^{\ensuremath{#2}}\ensuremath{#3}}}}
\newcommand{\EqrefExampleOne}[3]{\textup{(}\text{\hyperref[eq:LabelExampleOne]{$\EqtagExampleOne{#1}{#2}{#3}$}\textup{)}}}

\newcommand{\EqtagExampleTwo}[3]{\MacroColor{\ensuremath{\mathrm{BLP}_{\ensuremath{#1}}^{\ensuremath{#2}}\ensuremath{#3}}}}
\newcommand{\EqrefExampleTwo}[3]{\textup{(}\text{\hyperref[LabelExampleTwo]{$\EqtagExampleTwo{#1}{#2}{#3}$}\textup{)}}}

\newcommand{\EqtagSensitivity}[1]{\MacroColor{\ensuremath{\mathrm{W}\ensuremath{#1}}}}
\newcommand{\EqrefSensitivity}[1]{\textup{(}\text{\hyperref[eq:LabelSensitivity]{$\EqtagExampleOne{#1}$}\textup{)}}}

\newcommand{\EqtagSATBLP}[3]{\MacroColor{\ensuremath{\mathrm{CLP}_{#1}^{#2}#3}}}
\newcommand{\EqrefSATBLP}[3]{\textup{(}\text{\hyperref[eq:SATBLP]{$\EqtagSATBLP{#1}{#2}{#3}$}\textup{)}}}

\newcommand{\EqtagLEXSATBLP}[3]{\MacroColor{\ensuremath{\mathrm{DLP}_{#1}^{#2}#3}}}
\newcommand{\EqrefLEXSATBLP}[3]{\textup{(}\text{\hyperref[eq:LEXSATBLP]{$\EqtagLEXSATBLP{#1}{#2}{#3}$}\textup{)}}}

\newcommand{\EqtagCompactLEXSATBLP}[3]{\MacroColor{\ensuremath{\widehat{\mathrm{DLP}}_{#1}^{#2}#3}}}
\newcommand{\EqrefCompactLEXSATBLP}[3]{\textup{(}\text{\hyperref[eq:CompactLEXSATBLP]{$\EqtagCompactLEXSATBLP{#1}{#2}{#3}$}\textup{)}}}

\newcommand{\SensitivityOptValFunc}[1]{V#1}
\newcommand{\SensitivityFeasSolSet}[1]{F#1}
\newcommand{\SensitivityOptSolSet}[1]{S#1}

\newcommand{\SensitivitySetValuedMapExampleMap}{F}
\newcommand{\SensitivitySetValuedMapExampleFunc}{f}
\newcommand{\SensitivityDomain}{\mathrm{dom}}
\newcommand{\SensitivityGraph}{\mathrm{graph}}

\newcommand{\GraphOfBLPLowerLevelPolyhedra}{\MacroColor{U}}

\newcommand{\BLPLowerLevelPolyhedraIndexSet}{\MacroColor{\Omega}}

\newcommand{\BLPPointSizeEstimate}{\MacroColor{\psi_2}}

\newcommand{\TLPPointSizeEstimate}{\MacroColor{\psi_3}}

\newcommand{\KLPValueSizeEstimate}{\MacroColor{\phi_k}}

\newcommand{\SATUNSAT}{\MacroColor{\textsc{SAT-UNSAT}}}

\newcommand{\Eqtag}[4]{\ensuremath{\text{#1}_{#2}^{#3}#4}}
\newcommand{\Eqref}[5]{\hyperref[#1]{\Eqtag{#2}{#3}{#4}{#5}}}

\newcommand{\ConstrainedMLPwoLAccent}[1]{\hat{#1}}

\title{Price of Coupling \\ in Multilevel Linear Programming}

\author{Nagisa Sugishita\thanks{HEC Montréal
  (\email{nagisa.sugishita@hec.ca}).}
\and Margarida Carvalho\thanks{Université de Montréal 
  (\email{carvalho@iro.umontreal.ca}).}}

\date{\today}

\begin{document}

\maketitle

\begin{abstract}
Multilevel programming is the standard mathematical framework for modeling hierarchical decision-making, yet a comprehensive understanding of its computational complexity remains fragmented.
In this paper, we characterize the computational complexity of deciding the existence of feasible and optimal solutions, as well as computing the optimal objective value in multilevel linear programming (LP).
Our analysis considers various combinations of modeling assumptions, including the presence or absence of linking (coupling) constraints and whether all variables are bounded. 

In particular, we show the feasibility problem of $k$-level LP for $k \ge 2$ is $\Sigma^p_{k-1}$-complete in general. When linking constraints and unbounded variables are absent, feasibility is polynomial-time solvable for $k \le 4$ but becomes $\Sigma^p_{k-1}$-complete for $k \ge 5$, indicating a sharp jump in computational complexity between four and five levels assuming the polynomial hierarchy does not collapse. 
Combined with other results, one major implication is that no polynomial-time Turing machine can transform a bilevel LP instance with linking constraints into one without linking constraints while preserving feasibility unless $\ClassP{} = \ClassNP{}$. In contrast, such machines exist for all $k \ge 5$.

We observe similar phenomena with the computational complexity of deciding the existence of an optimal solution in multilevel LP. 
In the bilevel case, feasibility and boundedness fully characterize the existence of an optimal solution, implying that the problem is DP-complete. However, these conditions are insufficient for $k \ge 3$. We show that the problem for $k \ge 3$ is $\ClassDeltaP{k}$-complete. 
Similar to the feasibility problem, the decision of the existence of an optimal solution becomes polynomially solvable for $k=2,3$ without linking constraints and unbounded variables.
However, the problem is $\ClassDeltaP{k}$-complete for $k\ge4$, even with these simplifying assumptions.
Interestingly, the computation of the optimal objective value is $\ClassFDeltaP{k}$-complete for any $k \ge 2$, even without linking constraints and unbounded variables.
We also discuss the extension of our results to the mixed-binary cases.
\end{abstract}

\section{Introduction}

A multilevel programming problem is one in which some decision variables
are constrained to belong to the optimal solution set of another problem.
Following the work of~\MyCitet{Bracken and McGill}{bracken1973mathematical} and~\MyCitet{Candler and Norton}{candler1977multi}, research activity in this area has expanded considerably.
In particular, methodological progress in bilevel programming has produced a substantial literature addressing both theoretical questions and computational techniques; see \MyCitet{Beck \EtAl{}}{beck2026bilevel}, \MyCitet{Carvalho \EtAl{}}{carvalho2025integer}, and \MyCitet{Dempe and Zemkoho}{dempe2020bilevel} for recent overviews.
Interest in trilevel programming has also grown, motivated by its expressive modeling power. Notable applications include the critical node problem~\MyCite{nabli2022complexity}, resource allocation models~\MyCite{cassidy1971efficient}, fortification–interdiction settings~\MyCite{tomasaz2024completeness}, and cyber-security planning~\MyCite{liu2015trilevel}. Further examples of multilevel applications are surveyed in~\MyCite{migdalas2013multilevel,vicente1994bilevel}.

In this work, we focus on multilevel linear programming (LP).
Moreover, we concentrate on the optimistic variant of multilevel models, where, if a lower-level problem possesses multiple optimal solutions, the one that is most advantageous for the (immediately preceding) upper-level decision maker is assumed to be chosen.

In multilevel LP, a constraint is referred to as a linking (coupling) constraint if it involves variables from the followers.
The presence of linking constraints makes multilevel LP more complicated at first glance.
For example, the feasible set of a bilevel LP is connected when there are no linking constraints, but this property does not hold when linking constraints appear in the leader’s problem~\MyCite{Audet_2006,Colson_2007}.
Indeed, one can model any mixed-binary LP as a bilevel LP with linking constraints~\MyCite{audet1997links}.

However, from the perspective of computational complexity, many results hold regardless of the presence of linking constraints. For example, \MyCitet{Jeroslow}{Jeroslow1985} showed that the decision version of $k$-level LP is $\ClassSigmaP{k-1}$-hard, and this remains true even when the instance is restricted to have no linking constraints. Recently, \MyCitet{Sugishita and Carvalho}{Sugishita2026} established its membership in \ClassSigmaP{k-1}, implying that the decision version of $k$-level LP is \ClassSigmaP{k-1}-complete with or without linking constraints. \MyCitet{Rodrigues \EtAl{}}{RodriguesEtAl2024} proved \ClassSigmaP{k-1}-hardness for deciding unboundedness in $k$-level LP with linking constraints, and later \MyCitet{Sugishita and Carvalho}{Sugishita2026} showed that this problem is \ClassSigmaP{k-1}-complete regardless of the presence of linking constraints. Similarly, it follows from \MyCitet{Vicente \EtAl{}}{VicenteEtAl1994} that, for a given bilevel LP and a candidate solution, verifying whether it is a local optimum is \ClassCoNP{}-hard, with or without linking constraints.
We note that all of the above hardness results are based on transformations producing instances with coefficients of polynomial magnitude.
Consequently, under standard complexity assumptions, there cannot exist a pseudo-polynomial-time (oracle) Turing machine for the corresponding decision problems.

There is also growing interest in developing approaches that transform a bilevel LP instance with linking constraints into one without them. One notable example is the work of \MyCitet{Henke \EtAl{}}{henke2025coupling}, which discusses an augmented-Lagrangian-based approach for removing linking constraints. \MyCitet{Sugishita and Carvalho}{sugishita2025complexitybilevellinearprogramming} presented a polynomial-time procedure that transforms a bilevel LP instance with linking constraints into an equivalent one without them, while preserving the optimal solutions. The correctness of these methods relies on certain assumptions, such as the existence of an optimal solution.

In this paper, we analyze the computational complexity of deciding the existence of feasible and optimal solutions in multilevel LP, as well as the computational complexity of computing the optimal objective value, as summarized in Table~\ref{tab:summary}.
These problems are fundamental and of interest per se.
For example, the equivalence of the decision version of LP and its feasibility problem plays a key role in the analysis of the polynomial solvability of LP.
Our results show that such an equivalence fails for bilevel LP without linking constraints.
Furthermore, our results indicate the impacts of linking and unbounded variables on the computational complexity of $k$-level LP, hence the title of this paper.
For instance, deciding the feasibility of a trilevel LP with linking constraints is \ClassSigmaP{2}-complete.
Without linking constraints, the problem becomes \ClassCoNP{}-complete.
Moreover, if there are neither linking constraints nor unbounded variables, the problem is solvable in polynomial time.
We observe that the computational complexity appears to increase with the number of levels~$k$ in multilevel LP. A somewhat surprising result is that, without linking constraints and unbounded variables, feasibility can be decided in polynomial time for $k \le 4$, but becomes \ClassSigmaP{k-1}-complete for $k \ge 5$.
Thus, there is a sharp increase in computational complexity when moving from 4-level to 5-level LPs, assuming the polynomial hierarchy does not collapse.
With linking constraints, the problem is \ClassSigmaP{k-1}-complete for any $k \ge 2$.

An interesting implication of the complexity results for feasibility decisions is that unless $\ClassP{} = \ClassNP{}$ (respectively, $\ClassCoNP{} = \ClassSigmaP{2}$, which in turn implies $\ClassNP{} = \ClassCoNP{}$), there is no polynomial-time Turing machine that transforms a bilevel LP (respectively, trilevel LP) instance with linking constraints into one without linking constraints while preserving feasibility.
In this regard, the assumption of the existence of optimal solutions used in the aforementioned transformation of bilevel LP to remove linking constraints in \cite{sugishita2025complexitybilevellinearprogramming} seems unavoidable: we need an assumption that is not verifiable in polynomial time, or we need to give up on the efficiency of the transformation.
In contrast, for $k \ge 5$, such a polynomial-time Turing machine does exist, although such a transformation may not preserve the optimal objective value, as it is designed only to preserve feasibility.

We also analyze the computational complexity of deciding the existence of an optimal solution in $k$-level LP.
In the bilevel case, an optimal solution exists if and only if the instance is both feasible and not unbounded.
Intuitively, this means that deciding the existence of an optimal solution corresponds to the intersection of the feasibility problem and the complement of the unboundedness problem.
The complexity class \ClassDP{} introduced by \MyCitet{Papadimitriou and Yannakakis}{papadimitriou1982complexity} captures decision problems of this type.
For trilevel LP, the situation appears more complex: feasibility and boundedness are necessary but not sufficient for the existence of an optimal solution. In fact, we show that deciding the existence of an optimal solution in a $k$-level LP with $k \ge 3$ is $\ClassDeltaP{k}$-complete.
We note that \ClassDP{} equals $\ClassP{}_{\parallel}^{\ClassNP{}[2]}$, the second level of the query hierarchy over \ClassNP{} containing decision problems solvable with two parallel queries to the \ClassNP{} oracle, while $\ClassDeltaP{k}$ equals $\ClassP{}^{\ClassSigmaP{k-1}}$, a class of decision problems that can be solved by a polynomial-time Turing machine with oracle access to $\ClassSigmaP{k-1}$ (which can make polynomially many adaptive queries) \cite{wagner1990bounded}.
We also examine the special case where the input instance has neither linking constraints nor unbounded variables. As observed for the feasibility problem, the computational complexity of the problem appears to increase sharply from $k=3$ (polynomial-time solvable) to $k=4$ ($\ClassDeltaP{k}$-complete), assuming the polynomial hierarchy does not collapse.

In this paper, we also show that computing the optimal objective value of a $k$-level LP instance is \ClassFDeltaP{k}-complete for $k \ge 2$, under metric reductions computed by logarithmic-space transducers \cite{krentel1986complexity,papadimitriou1982complexity}.
To the best of our knowledge, the computational complexity of any search problem associated with $k$-level LP has not been investigated in the literature, except for \cite{deng1998complexity}.
In that reference, the author cites an unpublished manuscript and claims that the $k$-level LP problem is \ClassFDeltaP{k}-complete.
However, the search problem discussed there is not clearly defined, and neither a proof nor a detailed discussion is provided.
Consequently, we are unable to compare our results with those in \cite{deng1998complexity}.

\begin{table}
\centering
\caption{Computational complexity of the decision version of $k$-level LP (\DecisionProblemVal{k}), the decision of unboundedness (\DecisionProblemUnb{k}), the decision of the existence of a feasible solution (\DecisionProblemFeas{k}), the decision of the existence of an optimal solution (\DecisionProblemAttain{k}) and the search of the optimal objective value (\SearchProblemObj{k}) under various assumptions on linking constraints and variable bounds. ``Bounded $x$'' refers to instances where all variable bounds are finite, while ``unbounded $x$'' denotes general instances. The results without references are established in this paper and ``-'' indicates trivial cases. All results remain valid even when the input instances are restricted to have coefficients of polynomial magnitude.}
\label{tab:summary}
\footnotesize
\setlength\extrarowheight{0.2em}
\setlength{\tabcolsep}{4pt}
\begin{NiceTabular}{ccccccc}
\CodeBefore
\Body
\toprule
& & & \multicolumn{2}{c}{With Linking Constraints} & \multicolumn{2}{c}{Without Linking Constraints} \\
\cmidrule(lr){4-5}
\cmidrule(lr){6-7}
& \multicolumn{1}{c}{$k$} & & \multicolumn{1}{c}{Unbounded $x$} & \multicolumn{1}{c}{Bounded $x$} & \multicolumn{1}{c}{Unbounded $x$} & \multicolumn{1}{c}{Bounded $x$} \\
\midrule
\DecisionProblemVal{k}
& $\ge 2$ & \cite{Jeroslow1985,Sugishita2026} & \ClassSigmaP{k - 1}-complete & \ClassSigmaP{k - 1}-complete & \ClassSigmaP{k - 1}-complete & \ClassSigmaP{k - 1}-complete \\
\midrule
\DecisionProblemUnb{k}
& $\ge 2$ & \cite{Sugishita2026,RodriguesEtAl2024} & \ClassSigmaP{k - 1}-complete & - & \ClassSigmaP{k - 1}-complete & - \\
\midrule
\multirow[m]{4}{*}{\DecisionProblemFeas{k}}
& 2 && \ClassNP{}-complete & \ClassNP{}-complete & \ClassP{} & \ClassP{} \\
& 3 && \ClassSigmaP{2}-complete & \ClassSigmaP{2}-complete & \ClassCoNP{}-complete & \ClassP{} \\
& 4 && \ClassSigmaP{3}-complete & \ClassSigmaP{3}-complete & \ClassPiP{2}-hard & \ClassP{} \\
& $\ge 5$ && \ClassSigmaP{k - 1}-complete & \ClassSigmaP{k - 1}-complete & \ClassSigmaP{k - 1}-complete & \ClassSigmaP{k - 1}-complete \\
\midrule
\multirow[m]{3}{*}{\DecisionProblemAttain{k}}
& 2 & & DP-complete & \ClassNP{}-complete & \ClassCoNP{}-complete & \ClassP{} \\
& 3 & & $\ClassDeltaP{3}$-complete & $\ClassDeltaP{3}$-complete & \ClassPiP{2}-complete & \ClassP{} \\
& $\ge4$ & & $\ClassDeltaP{k}$-complete & $\ClassDeltaP{k}$-complete & $\ClassDeltaP{k}$-complete & $\ClassDeltaP{k}$-complete \\
\midrule
\SearchProblemObj{k}
& $\ge 2$ & & \ClassFDeltaP{k}-complete & \ClassFDeltaP{k}-complete & \ClassFDeltaP{k}-complete & \ClassFDeltaP{k}-complete \\
\bottomrule
\CodeAfter
\end{NiceTabular}
\end{table}

\textbf{Paper Structure.}
This paper is organized as follows.
Section~\ref{sec:ProblemFormulation} presents the problem definitions.
Section~\ref{sec:sensitivity-of-bilevel-linear-program-without-linking-constraints} provides auxiliary results for sensitivity analysis of bilevel LP without linking constraints. 
These are used in Section~\ref{sec:membership_proofs}, where we prove the membership results of the decision of the existence of feasible and optimal solutions and the computation of the optimal objective value in various complexity classes. 
Section~\ref{sec:hardness_results} establishes the hardness results.
Section~\ref{sec:extensions_and_limitations} provides a brief discussion on the extensions and limitations of our argument, including the computational complexity of $k$-level mixed-binary LP.
Finally, Section~\ref{sec:Conclusions} concludes the paper.

\textbf{Notation.}
We say that a vector, matrix, or optimization instance is \emph{rational} if and only if all its data are rational numbers.
Given an optimization instance $P$, we use $\OptValFunc(P)$, $\FeasSolSet(P)$, and $\OptSolSet(P)$ to denote the optimal objective value, the set of feasible solutions, and the set of optimal solutions, respectively.
We write $\NonNegativeIntegers{} = \{0, 1, \ldots, \}$ and $\PositiveIntegers{} = \{1, 2, \ldots, \}$.
We use $\ZeroVector$ (respectively, $\OneVector$) to denote the all-zeros (respectively, all-ones) vector, with the dimension clear from the context.
We use subscripts to denote components of a vector: for a vector $v$, the $i$-th component is denoted by $v_i$.
To reduce clutter, we sometimes refer to column vectors inline; given $u \in \mathbb{R}^{d_1}$ and $v \in \mathbb{R}^{d_2}$, we use
the notation $(u, v)$ to denote the column vector
$
\begin{pmatrix} u \\ v \end{pmatrix} \in \mathbb{R}^{d_1 + d_2}.
$
We follow the convention on binary encoding used in \cite{Schrijver1998}.

\section{Problem Formulation}
\label{sec:ProblemFormulation}

Let $k \in \PositiveIntegers{}$.
In this paper, we use $k$ to denote the number of levels in multilevel programming.
For each $l = 1, \ldots, k$, let $n_l \in \PositiveIntegers{}$ and $m_l \in \NonNegativeIntegers{}$ denote the number of variables and linear constraints in the $l$-th player's problem, respectively.
For each $l = 1, \ldots, k$ and $i = 1, \ldots, k$, let $A_{l i} \in \mathbb{R}^{\NConstraints{}_l \times \NVariables{}_i}$, $b_l \in \mathbb{R}^{\NConstraints{}_l}$ and $c_{l i} \in \mathbb{R}^{\NVariables{}_i}$.
We collectively write $A = \{ A_{li} : l = 1, \ldots, k, i = 1, \ldots, k \}$, $b = \{ b_{l} : l = 1, \ldots, k \}$, and $c = \{ c_{l i} : l = 1, \ldots, k, i = l, \ldots, k \}$.
A $k$-level LP instance $(A, b, c)$ is defined as follows.
Let the first player choose a decision variable $x_1 \in \mathbb{R}^{n_1}$, followed by the second player choosing $x_2 \in \mathbb{R}^{n_2}$, and so on, until the $k$-th player selects $x_k \in \mathbb{R}^{n_k}$. The objective of the $l$-th player is minimizing $\sum_{i = l}^k c_{l i}^{\top} x_i$ while satisfying $\sum_{i=1}^k A_{l i} x_i \ge b_l$.
Formally, the first player's problem~\eqref{LabelGeneralKLP} is defined as
\begin{gather}
\inf_{x_1, \ldots, x_k}
\left\{
\sum_{i = 1}^k c_{1 i}^{\top} x_i
:
\displaystyle
\sum_{i = 1}^k A_{1 i} x_i \ge b_1,
\begin{pmatrix}
x_2 \\ \vdots \\ x_k
\end{pmatrix}
\in
\mathcal{S}\eqref{LabelGeneralKLPk_1Level}
\right\},
\label{LabelGeneralKLP}
\tag{$\EqtagGeneralKLP{k}{1}{(A, b, c)}$}
\end{gather}
where the second player's problem~\eqref{LabelGeneralKLPk_1Level} is
\begin{gather}
\inf_{x_2, \ldots, x_k}
\left\{ \sum_{i = 2}^{k} c_{2 i}^{\top} x_i :
\displaystyle
\sum_{i = 1}^{k} A_{2 i} x_i \ge b_2,
\begin{pmatrix}
x_3 \\ \vdots \\ x_k
\end{pmatrix}
\in
\mathcal{S}(\EqtagGeneralKLP{k}{3}{(x_1, x_2, A, b, c)})
\right\},
\label{LabelGeneralKLPk_1Level}
\tag{$\EqtagGeneralKLP{k}{2}{(x_1, A, b, c)}$}
\end{gather}
with the $l$-th player's problem $(\EqtagGeneralKLP{k}{l}{(x_1, \ldots, x_{l - 1}, A, b, c)})$, $3 \le l \le k - 1$, defined analogously, down to the $k$-th player's problem~\eqref{eq:1Level}:
\begin{gather}
\inf_{x_k}
\left\{ 
c_{k k}^{\top} x_k 
:
\sum_{i = 1}^k A_{k i} x_i \ge b_k
\right\}.
\label{eq:1Level}
\tag{$\EqtagGeneralKLP{k}{k}{(x_1, \ldots, x_{k - 1}, A, b, c)}$}
\end{gather}
We say it is a rational instance if all entries in $(A, b, c)$ are rational.
We identify the $k$-level LP instance $(A, b, c)$ with the first player's problem~\eqref{LabelGeneralKLP}. Accordingly, the optimal objective value of the $k$-level LP instance $(A, b, c)$ is given by $\OptValFunc\eqref{LabelGeneralKLP}$, and a solution $(x_1, \ldots, x_k)$ is feasible for this instance if and only if $(x_1, \ldots, x_k) \in \FeasSolSet\eqref{LabelGeneralKLP}$.

To capture the computational complexity of $k$-level LP instances precisely, we consider the following assumptions.
\begin{enumerate}[label=(C\arabic*)]
\item
\label{Condition1}
Except for the last player's problem~\eqref{eq:1Level}, there are no linear constraints;
\item
\label{Condition2}
The linear constraints in the last player's problem~\eqref{eq:1Level} include variable bounds $\ZeroVector \le x_i \le \OneVector$ for all $i = 1, \ldots, k$;
\item
\label{Condition3}
All entries in $A$, $b$ and $c$ are integers between $-n$ and $n$, where $n = n_1 + \cdots + n_k$.
\end{enumerate}

In bilevel programming, a constraint in the leader's problem is called a linking (coupling) constraint if it involves the followers' variables.
In this paper, given a $k$-level LP instance and $l = 2, \ldots, k$, we say that a constraint in the $l$-th player's problem is a linking (respectively, non-linking) constraint if and only if it involves (respectively, does not involve) the followers' variables.
For example, if $A_{l j} = 0$ for all $j = l + 1, \ldots, k$, the $l$-th player's problem does not have linking constraints at all.
The following proposition will be useful throughout the paper.
It allows one to ``move'' all non-linking constraints to the last player's problem.
Therefore, given a $k$-level LP instance, if it does not have a linking constraint, one can construct an equivalent $k$-level LP instance satisfying condition~\ref{Condition1}.
The proof is found in \MyCite{Sugishita2026}.

\begin{proposition}
\label{prop:constraint-forwarding}
Let $(A, b, c)$ be a $k$-level LP instance.
For each $l = 1, \ldots, k - 1$, suppose $A$ and $b$ are written as
$$
A_{l i} = \begin{pmatrix}
A_{l i}'
\\
A_{l i}''
\end{pmatrix},
\ \
b_{l} = \begin{pmatrix}
b_{l}'
\\
b_{l}''
\end{pmatrix},
\quad
\forall i = 1, \ldots, k,
$$
such that $A_{l i}' = 0$ for all $i = l + 1, \ldots, k$ (i.e., $\sum_{i=1}^{l} A_{l i}' x_i \ge b_l'$ are non-linking constraints in the $l$-th player's problem).
Define a $k$-level LP instance $(\ConstrainedMLPwoLAccent{A}, \ConstrainedMLPwoLAccent{b}, \ConstrainedMLPwoLAccent{c})$ by $\ConstrainedMLPwoLAccent{A}_{l i} = A_{l i}''$, $\ConstrainedMLPwoLAccent{b}_{l} = b_{l}''$ for all $l = 1, \ldots, k - 1$, $i = 1, \ldots, k$, $\ConstrainedMLPwoLAccent{c} = c$, and
\begin{equation*}
(
\ConstrainedMLPwoLAccent{A}_{k 1}
\
\ConstrainedMLPwoLAccent{A}_{k 2}
\cdots
\ConstrainedMLPwoLAccent{A}_{k k-1}
\
\ConstrainedMLPwoLAccent{A}_{k k}
)
=
\begin{pmatrix}
A_{1 1}' & & & & \\
A_{2 1}' & A_{2 2}' & & & \\
\vdots & & \ddots & & \\
A_{k-1 \, 1}' & A_{k-1 \, 2}' & \cdots & A_{k-1 \, k-1}'\\
A_{k 1} & A_{k 2} & \cdots & A_{k k-1} & A_{k k}
\end{pmatrix},
\ConstrainedMLPwoLAccent{b} = \begin{pmatrix}
b_{1}' \\
b_{2}' \\
\vdots \\
b_{k-1}' \\
b_{k} \\
\end{pmatrix}.
\end{equation*}
Then,
$\FeasSolSet\EqrefGeneralKLP{k}{1}{(\ConstrainedMLPwoLAccent{A}, \ConstrainedMLPwoLAccent{b}, \ConstrainedMLPwoLAccent{c})} = \FeasSolSet\EqrefGeneralKLP{k}{1}{(A, b, c)}$ and $\OptValFunc\EqrefGeneralKLP{k}{1}{(\ConstrainedMLPwoLAccent{A}, \ConstrainedMLPwoLAccent{b}, \ConstrainedMLPwoLAccent{c})} = \OptValFunc\EqrefGeneralKLP{k}{1}{(A, b, c)}$.
\end{proposition}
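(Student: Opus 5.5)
We argue by showing that, for every level $l$ and every fixed upper-level decision $(x_1,\ldots,x_{l-1})$, the $l$-th player's problem has the same feasible set in both instances whenever the upper-level decisions satisfy the non-linking constraints of levels $1,\ldots,l-1$. We then read off the equality of the feasible sets of \eqref{LabelGeneralKLP} and, since $\ConstrainedMLPwoLAccent{c}=c$, the equality of optimal values. The natural tool is backward induction on $l$, from $k$ down to $1$.

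For $l=1,\ldots,k$ let $N_l$ be the set of $(x_1,\ldots,x_l)$ satisfying $\sum_{i=1}^{j} A_{ji}'x_i\ge b_j'$ for all $j\le \min(l,k-1)$. These are exactly the moved constraints that involve only $x_1,\ldots,x_l$. The claim to prove by induction on $l=k,k-1,\ldots,2$ has two parts. First, if $(x_1,\ldots,x_{l-1})\in N_{l-1}$, then the feasible sets of the $l$-th player's problem coincide in the two instances, and hence so do the solution sets $\mathcal S$. Second, if $(x_1,\ldots,x_{l-1})\notin N_{l-1}$, then the $l$-th player's problem in the new instance is infeasible, so its solution set is empty.

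The base case $l=k$ is direct from the construction. The new last-level constraint system is the original $k$-th constraint system together with the moved rows $A'_{ji}$, $j\le k-1$. These rows depend only on $x_1,\ldots,x_{k-1}$, so they are satisfied or violated independently of $x_k$.

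For the inductive step at level $l<k$, the new feasible set consists of the $(x_l,\ldots,x_k)$ that satisfy the $A''_l$ rows and whose tail $(x_{l+1},\ldots,x_k)$ lies in $\mathcal S$ of the new $(l+1)$-level problem. Suppose $(x_1,\ldots,x_{l-1})\in N_{l-1}$. If $(x_1,\ldots,x_l)\in N_l$, the induction hypothesis makes the lower solution sets agree. Since the original problem imposes exactly $A_l'$ and $A_l''$ and nothing else at this level, the feasible sets agree. If instead $(x_1,\ldots,x_l)\notin N_l$, then $x_l$ violates $A_l'$, so $x_l$ is infeasible in the original instance. By the second part of the hypothesis, the new lower solution set is empty, so $x_l$ is infeasible in the new instance as well. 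Thus in both instances the infeasible choices of $x_l$ are excluded, and the feasible sets coincide.

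Finally, at the top level the original problem imposes $A_1'$ and $A_1''$, while the new one imposes $A_1''$ together with nonemptiness of the lower solution set. The same dichotomy shows these constraints cut out identical sets, so $\FeasSolSet$ agrees. Equal objectives then give equal infima, including the infeasible ($+\infty$) and unbounded ($-\infty$) cases.

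The delicate point is the infeasibility propagation in the second part of the induction. A violated moved constraint must make the new lower problem have an empty solution set, not merely a nonempty set of suboptimal points. For this we argue that an empty feasible set at level $l+1$ forces an empty optimal set. This in turn forces every choice at level $l$ to be infeasible, so its optimal set is empty too, and so on up the chain. Because the moved rows involve only the variables of earlier levels, this emptiness persists under any later choices, which is what makes the induction go through. We also need to ensure that the hypothesis is applied only with the correct prefix in $N_{l-1}$.
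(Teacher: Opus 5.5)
Your backward induction is correct. The two-part invariant does all the work: the $l$-th player's feasible sets coincide whenever $(x_1,\ldots,x_{l-1})\in N_{l-1}$, and the new $l$-th player's problem is infeasible otherwise. Together these give equal top-level feasible sets and, because the objectives are unchanged, equal optimal values. The paper itself gives no proof of this proposition and defers to \cite{Sugishita2026}, so there is no in-paper argument to compare against; yours is a complete, self-contained proof of the statement.
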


We use \KLPInstances{k} to designate $k$-level LP.
Furthermore, we use \DecisionProblemModifierNoLink{\KLPInstances{k}}, \DecisionProblemModifierBoxed{\KLPInstances{k}}, and \DecisionProblemModifierPoly{\KLPInstances{k}} to denote $k$-level LP satisfying condition~\ref{Condition1}, \ref{Condition2}, and \ref{Condition3}, respectively.
We may use multiple superscripts and/or subscripts to refer to
 $k$-level LP satisfying multiple conditions.
For example, \DecisionProblemModifierNoLinkBoxedPoly{\KLPInstances{k}} denotes $k$-level LP satisfying conditions~\ref{Condition1}--\ref{Condition3}.

For each $k \ge 2$, we define the decision problems discussed in the introduction as follows.
\begin{align}
&
\parbox{0.85\textwidth}{Given a rational $k$-level LP instance $(A, b, c)$ and a rational number $t$, is there a feasible solution whose objective value is less than or equal to $t$?}
\tag{\EqtagDecisionProblemOnKLP{k}{VAL}}
\label{LabelDecisionProblemVal}
\\[0.8em]
&
\parbox{0.85\textwidth}{Given a rational $k$-level LP instance $(A, b, c)$, is it unbounded?}
\tag{\EqtagDecisionProblemOnKLP{k}{UNB}}
\label{LabelDecisionProblemUnb}
\\
&
\parbox{0.85\textwidth}{Given a rational $k$-level LP instance $(A, b, c)$, does it have a feasible solution?}
\tag{\EqtagDecisionProblemOnKLP{k}{FEAS}}
\label{LabelDecisionProblemFeas}
\\
&
\parbox{0.82\textwidth}{Given a rational $k$-level LP instance $(A, b, c)$, does it have an optimal solution?}
\tag{\EqtagDecisionProblemOnKLP{k}{ATTAIN}}
\label{LabelDecisionProblemAttain}
\end{align}
In this paper, we also consider the search problem to compute the optimal objective value:
\begin{align}
&
\parbox{0.85\textwidth}{Given a rational $k$-level LP instance $(A, b, c)$, output the optimal objective value, or return ``no'' if it is not finite.}
\tag{\EqtagDecisionProblemOnKLP{k}{SEARCH}}
\label{LabelSearchProblemObj}
\end{align}
We denote by
\DecisionProblemModifierNoLink{L},
\DecisionProblemModifierBoxed{L}, and
\DecisionProblemModifierPoly{L} the variants of problem L whose inputs are restricted to satisfy conditions~\ref{Condition1}, \ref{Condition2}, and \ref{Condition3}, respectively.
Variants satisfying multiple conditions are defined analogously using the appropriate superscripts and subscripts.
For example, we write
\DecisionProblemModifierNoLinkBoxedPoly{\DecisionProblemVal{k}} to denote the variant of the decision problem
\DecisionProblemVal{k} associated with instances in
\DecisionProblemModifierNoLinkBoxedPoly{\KLPInstances{k}}.

The goal of this paper is to examine the computational complexity of \DecisionProblemFeas{k}, \DecisionProblemAttain{k}, and \SearchProblemObj{k}, establishing the new results summarized in Tables~\ref{tab:summary}.
We conclude this section by restating the prior results of~\cite{Jeroslow1985} and~\cite{Sugishita2026}, which correspond to the first two rows of the table:
\begin{theorem}[\cite{Jeroslow1985, Sugishita2026}]
\mbox{}
\begin{enumerate}
\item
The decision problem \DecisionProblemVal{k} is \ClassSigmaP{k - 1}-complete.
Moreover, this remains valid even when the input is restricted to satisfy conditions~\ref{Condition1}--\ref{Condition3}.
\item
The decision problem \DecisionProblemUnb{k} is \ClassSigmaP{k - 1}-complete.
Moreover, this remains valid even when the input is restricted to satisfy conditions~\ref{Condition1} and~\ref{Condition3}.
\end{enumerate}
\end{theorem}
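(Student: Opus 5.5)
This theorem is a restatement of prior work, so the plan is to assemble its two parts from the cited results rather than build a new argument. Membership comes from \cite{Sugishita2026}, and hardness from the reductions of \cite{Jeroslow1985} and \cite{RodriguesEtAl2024,Sugishita2026}. Because restricting the input only makes a problem easier, membership need only be shown for general rational instances, and hardness only for the most restricted class stated.

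\emph{Membership.} For \DecisionProblemVal{k}, I would argue that a $k$-level LP with rational data has, whenever it is feasible, a feasible solution of polynomial encoding size. I would also show that the optimal value, whenever finite, is attained or approached through vertices of polyhedra of polynomial size. The $\ClassSigmaP{k-1}$ algorithm then guesses $x_1$ of polynomial size. It verifies lower-level optimality of $(x_2,\ldots,x_k)$ by a $\ClassPiP{k-2}$ query asserting that no follower response does strictly better at level~$2$. That query recursively has the same structure one level down, and at the bottom level optimality is checked by LP duality in polynomial time.

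For \DecisionProblemUnb{k}, I would certify unboundedness by a polynomial-size feasible point together with a recession direction along which lower-level optimality is preserved. This certificate is checked with the same alternation depth.

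\emph{Hardness.} For part (i), I would reduce from the $\ClassSigmaP{k-1}$-complete quantified 3-SAT problem with $k-1$ alternating blocks. Player $l$ controls block $l$ of the Boolean variables, relaxed to $[0,1]$, and is given an objective that penalizes fractional values (Jeroslow's penalty construction). The last player evaluates the clauses through an LP whose value is $0$ or $1$ according to satisfaction. The threshold is $t=0$.

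This construction places all constraints in the last level, including the bounds $\ZeroVector\le x_i\le\OneVector$, so it satisfies \ref{Condition1} and \ref{Condition2}. Its coefficients are small integers, which gives \ref{Condition3} after padding variables so that the coefficients lie in $[-n,n]$. Any non-linking constraints that appear can be moved to the last level by Proposition~\ref{prop:constraint-forwarding}.

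For part (ii), I would reduce \DecisionProblemVal{k} to \DecisionProblemUnb{k}. I would add an unbounded leader variable $z\ge 0$ with objective coefficient $-1$, tied through the last-level LP so that $z$ can grow only if the original value is at most $t$. For instance, a follower-controlled variable could pin $z$ to zero unless the penalty term vanishes. This keeps \ref{Condition1} and \ref{Condition3}, but necessarily not \ref{Condition2}.

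The main obstacle is the correctness of the penalty gadget in the hardness proof. I must show that at every intermediate level an optimal (optimistic) response can be taken binary, so the LP relaxation faithfully simulates the quantifier alternation. This is where Jeroslow's argument and its refinement in \cite{Sugishita2026} do the real work, and I would follow them directly.
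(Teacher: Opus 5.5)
Your membership sketch and your hardness argument for part~(i) follow the cited sources and are fine. For membership, you guess a polynomial-size point and verify follower optimality with alternating quantifiers. The universal quantifiers are sound because a strictly better response, if one exists, can be taken of polynomial size: apply Theorem~\ref{theorem:klp-feasibility-membership}(ii) to the second player's subinstance with an objective cut. For part~(i), Jeroslow's penalty instance with values $0/1$ and threshold $0$ does the job. For membership of \DecisionProblemUnb{k} you can drop the recession-direction certificates altogether. By Theorem~\ref{theorem:klp-feasibility-membership}(iii), a finite optimal value is at least $-2^{\phi_k(\sigma)}$, so unboundedness is just the \DecisionProblemVal{k} question with $t=-2^{\phi_k(\sigma)}-1$.

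The genuine gap is the hardness of \DecisionProblemUnb{k} under \ref{Condition1}. Your gadget, a leader variable $z\ge 0$ with cost $-1$ that a follower ``pins to zero unless the penalty vanishes'', is not a construction, and as described it does not work. Under \ref{Condition1}, every constraint containing $z$ sits in the last player's LP. Moreover, an intermediate follower may only choose responses for which the levels below it remain solvable. So a constraint such as $z\le w$ with $w$ follower-controlled does not pin $z$: the follower simply picks $w\ge z$. If instead $w$ is bounded, then $z$ is bounded in every configuration, good or bad. To forbid $z>0$ in a bad configuration, you would need the second player's optimal set to become empty exactly when $z>0$ and the configuration is bad. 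To allow arbitrarily large $z$ in a good configuration, you would need a bound of the form (unbounded quantity)$\times$(a $[0,1]$-valued indicator such as the penalty variable), which is bilinear. Your proposal supplies neither.

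The missing idea is homogeneity (Lemma~\ref{lemma:homogeneous-klp-without-linking-constraints}): without linking constraints, $\mathcal{F}(A,\lambda b,c)=\lambda\,\mathcal{F}(A,b,c)$ for every $\lambda>0$. Start from an instance $(A,b,c)$ satisfying \ref{Condition1} and \ref{Condition3} whose optimal value is $-1$ if $H$ is true and $0$ otherwise; this is a shifted Jeroslow instance, as in the proof of Theorem~\ref{theorem:klp-hardness}(ii). Give the leader one extra variable $s$, and replace the last-level system by $Ax-sb\ge\ZeroVector$, $s\ge 1$. All constraints stay in the last level with integer data of the same magnitude, so \ref{Condition1} and \ref{Condition3} are preserved. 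For each fixed $s\ge 1$, the feasible points are exactly $s$ times the original ones, with $s$ times the objective. Hence the optimal value is $\inf_{s\ge 1} s\cdot\OptValFunc(A,b,c)$, which is $-\infty$ if and only if $H$ is true. This is the same scaling device the paper uses in Theorems~\ref{theorem:hardness:attainability:bilevel} and~\ref{theorem:klp-hardness}(ii).

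Also, reduce directly from the quantified SAT problem through this $\{-1,0\}$ gap, rather than ``from \DecisionProblemVal{k}'' with a general threshold $t$. Scaling only detects ``optimal value $<t$''. For $k\ge 4$ without linking constraints, the value can equal $t$ without being attained (Section~\ref{subsec:motivational_example}). So no choice of shifted threshold makes that test coincide in general with ``some feasible point has objective $\le t$''.
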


\section{Sensitivity of Bilevel Linear Program Without Linking Constraints}
\label{sec:sensitivity-of-bilevel-linear-program-without-linking-constraints}

This section contains the sensitivity analysis of the bilevel LP without linking constraints.
The results of this section will be used in later sections to establish results regarding the computational complexity of \KLPInstances{k}.

\MyCitet{Basu \EtAl{}}{BasuEtAl2021} show that a union of polyhedra (e.g., a bounded mixed-integer set) can be modeled as the feasible set of a bilevel LP instance with linking constraints.
As a consequence, the value function of such a bilevel LP instance can be discontinuous (for instance, consider the value function of a mixed-integer program).
In contrast, as we demonstrate below, the value function is continuous in the case without linking constraints.

We begin with a lemma concerning the sensitivity of LPs.

\begin{lemma}
\label{lemma:lp-sensitivity}
Let $A \in \mathbb{R}^{\NConstraints \times \NVariables}$, $c \in \mathbb{R}^{\NVariables}$ and $v_p(b) = \min_{x \in \mathbb{R}^{\NVariables}} \{ c^{\top} x : A x \ge b\}$.
Let $\Delta$ be such that for each nonsingular submatrix $B$ of $A$ all entries of $B^{-1}$ are at most $\Delta$ in absolute value.
Given $b', b'' \in \mathbb{R}^{\NConstraints}$ such that both $v_p(b')$ and $v_p(b'')$ are finite, the following holds:
\begin{enumerate}
\item
$
| v_p(b') - v_p(b'') | \le n \Delta \| c \|_1 \, \| b' - b'' \|_{\infty}.
$
\item
For each optimal solution $x'$ of $v_p(b')$, there exists an optimal solution $x''$ of $v_p(b'')$ such that
$
\| x' - x'' \|_{\infty} \le n \Delta \| b' - b'' \|_{\infty}.
$
\end{enumerate}
\end{lemma}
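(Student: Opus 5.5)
This is the classical Cook--Gerards--Schrijver--Tardos proximity argument, with the sensitivity bound specialized to the right-hand side. We prove part (ii) first and derive part (i) from it.

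\textbf{Step 1: a decomposition of $x' - x''$.} Fix an optimal solution $x'$ of $v_p(b')$. Since $v_p(b'')$ is finite, the LP $\min\{c^\top x : Ax \ge b''\}$ attains its optimum; pick any optimal $z$. Split the rows of $A$ into
\[
I_1 = \{ i : a_i^\top x' < a_i^\top z \}, \qquad I_2 = \{ i : a_i^\top x' \ge a_i^\top z \},
\]
where $a_i^\top$ denotes row $i$ of $A$. Consider the polyhedral cone
\[
C = \{ y : a_i^\top y \le 0 \ (i \in I_1),\ a_i^\top y \ge 0 \ (i \in I_2) \},
\]
which contains $x' - z$. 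Every cone defined by rows of $A$ is generated by finitely many vectors $g_1,\dots,g_p$. Each generator solves a system $A_J g = e$ for a nonsingular square subsystem $A_J$ (after dropping the lineality directions, or handling them by a standard reduction). Here $e$ is a unit vector, and, scaled appropriately, $\|g_j\|_\infty \le \Delta$ by the hypothesis on entries of inverses. By Carathéodory's theorem, $x' - z = \sum_j \lambda_j g_j$ with $\lambda_j \ge 0$ and at most $n$ nonzero $\lambda_j$.

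\textbf{Step 2: exchange.} Split the generators by cost. For a generator $g_j$ with $c^\top g_j > 0$, the point $x' - \lambda_j g_j$ stays feasible for $Ax \ge b'$, because moving within the cone toward $z$ only relaxes constraints in the right direction. Optimality of $x'$ then contradicts $c^\top g_j > 0$ unless the relevant constraint is violated. The usual resolution is to define
\[
x'' = z + \sum_j \mu_j g_j, \qquad \mu_j = \min\{\lambda_j, \text{a truncation}\},
\]
and check two things: that $x''$ satisfies $Ax'' \ge b''$ by the sign pattern of $C$, and that $c^\top x'' \le c^\top z$ by the optimality of $x'$ and $z$ applied to the complementary sum. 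This gives optimality of $x''$.

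\textbf{Step 3: the key estimate, and the main obstacle.} We need $\mu_j \le \|b' - b''\|_\infty$. Here is the idea. Each generator $g_j$ has some tight row $i$ with $|a_i^\top g_j| = 1$ after scaling. Along the segment from $z$ to $x'$, that row moves by at most the gap between the two right-hand sides on the rows where $x'$ and $z$ disagree in the relevant direction, and this gap is bounded by $\|b' - b''\|_\infty$. Choosing the truncation at exactly this level and verifying feasibility is the delicate part. Concretely, one must handle rows where $x'$ is tight for $b'$ but $z$ is not, and vice versa. I would follow the Cook et al.\ template: write $x'' = z + \sum_{j : \lambda_j \le \delta} \lambda_j g_j + \dots$, or directly bound the $\lambda_j$ of the useful part, with $\delta = \|b' - b''\|_\infty$. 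Once this is in place,
\[
\|x' - x''\|_\infty \le \sum_j \mu_j \|g_j\|_\infty \le n \Delta \delta,
\]
since at most $n$ of the $\mu_j$ are nonzero.

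\textbf{Step 4: part (i).} By part (ii), there is an optimal $x''$ of $v_p(b'')$ with $\|x' - x''\|_\infty \le n\Delta\delta$. Then
\[
v_p(b'') - v_p(b') = c^\top (x'' - x') \le \|c\|_1 \, \|x'' - x'\|_\infty \le n \Delta \|c\|_1 \delta.
\]
Swapping the roles of $b'$ and $b''$ gives the same bound in the other direction, which proves part (i).

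Alternatively, part (i) can be obtained via duality. Both duals $\max\{ b^\top y : A^\top y = c,\ y \ge 0 \}$ have the same feasible set and attain at basic solutions $y$, so $v_p(b') - v_p(b'') \le (b' - b'')^\top y$. Bounding $\|y\|_1$ by $n\Delta\|c\|_1$ uses the support size of at most $n$ and the entrywise bound $\Delta$ on $B^{-1}$.
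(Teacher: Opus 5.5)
Part (i) is fine. Your Step~4 works, and your duality alternative is a complete, self-contained proof of (i): a basic optimal solution $y'$ of $\max\{b'^\top y : A^\top y=c,\ y\ge 0\}$ is supported on linearly independent rows $S$. Hence $y'_S=B^{-\top}c_T$ for a nonsingular $B=A_{S,T}$, which gives $\|y'\|_1\le n\Delta\|c\|_1$ and $v_p(b')-v_p(b'')\le (b'-b'')^\top y'$.

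\textbf{The gap is part (ii).} It is not proved, and the route you sketch does not work as stated. Steps~2--3 never specify the truncation. The Cook--Gerards--Schrijver--Tardos template you borrow relies on both endpoints satisfying the \emph{same} system, so that every $z+\sum_j\mu_jg_j$ with $0\le\mu_j\le\lambda_j$ is feasible. Here the endpoints satisfy different systems. For $i\in I_2$ you only get $a_i^\top(x'-\mu g_j)\ge a_i^\top z\ge b''_i$, not $\ge b'_i$. Symmetrically, points $z+\sum_j\mu_jg_j$ satisfy the rows $i\in I_1$ only with $b'_i$ in place of $b''_i$.

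Concretely, take $n=m=1$, $A=1$, $c=1$, $b'=1$, $b''=0$. Then $x'=1$, $z=0$, $g=1$, $\lambda=1$ and $c^\top g>0$, yet $x'-\lambda g=0$ violates $x\ge 1$. So the feasibility claim in Step~2 is false.

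Step~3 also aims at the wrong quantity:
\begin{itemize}
\item The $\lambda_j$ are unrelated to $\delta=\|b'-b''\|_\infty$. Take $b'=b''=0$, $A=1$, $c=0$, $x'=5$, $z=0$: the lemma forces $x''=x'$, i.e.\ $\mu=\lambda=5$, although $\delta=0$.
\item For $x''=z+\sum_j\mu_jg_j$ the distance is $\|\sum_j(\lambda_j-\mu_j)g_j\|_\infty$, not $\sum_j\mu_j\|g_j\|_\infty$.
\item What you would actually need is $\lambda_j-\mu_j\lesssim\delta$ together with $Ax''\ge b''$ on $I_1$. There the deficits $b''_i-a_i^\top x'\le\delta$ sit on arbitrary rows, while your normalization $|a_i^\top g_j|=1$ controls only one row per generator. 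Nothing in the proposal bridges this.
\end{itemize}

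\textbf{How the paper's source proves (ii).} The paper defers to Schrijver, Thm.~10.5 (with minor changes for real data), whose proof avoids decompositions entirely.

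First the case $c=0$. Suppose no $x''$ with $Ax''\ge b''$ lies within $\varepsilon:=n\Delta\delta$ of $x'$ in $\|\cdot\|_\infty$. Farkas' lemma then gives $y,u,v\ge 0$ with $A^\top y=v-u$ and $y^\top b''-y^\top Ax'-\varepsilon\|u+v\|_1>0$. Feasibility of $Ax\ge b''$ forces $u+v\ne 0$. Replace $y$ by a basic optimal solution of $\max\{b''^\top y : A^\top y=v-u,\ y\ge 0\}$, which is bounded by weak duality. This gives $\|y\|_1\le n\Delta\|u+v\|_1$, whence $0<y^\top(b''-b')-\varepsilon\|u+v\|_1\le 0$, a contradiction.

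For general $c$, let $A_1x\ge b'_1$ be the rows tight at $x'$, so $c=A_1^\top w$ for some $w\ge 0$. Fix any optimal $z$ for $b''$ and apply the $c=0$ case to the system $Ax\ge b''$, $-A_1x\ge -A_1z$:
\begin{itemize}
\item it is feasible, witnessed by $z$;
\item $x'$ satisfies it with right-hand side $(b',\,-A_1z-\delta\mathbf{1})$, because $A_1x'=b'_1\le b''_1+\delta\mathbf{1}\le A_1z+\delta\mathbf{1}$;
\item adjoining negated rows of $A$ does not change $\Delta$.
\end{itemize}
The resulting $x''$ obeys $c^\top x''=w^\top A_1x''\le w^\top A_1z=c^\top z$, hence is optimal. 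The $c=0$ case is also used on its own in Lemma~\ref{lemma:lipschitz-continuity-of-blp-feasible-set-function}, so it is worth isolating.
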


This is Theorem 10.5 in \MyCite{Schrijver1998}, extended to real-valued $A$, $b$ and $c$.
The proof in \MyCite{Schrijver1998} carries over to Lemma~\ref{lemma:lp-sensitivity} with minor modifications to accommodate real-valued data.

Throughout the remainder of this section, we focus on a bilevel LP instance without linking constraints.
We assume that $A$ and $c$ are fixed collections of \emph{real-valued} matrices and vectors, respectively, while $b$ is a \emph{real-valued} vector subject to perturbation.
Define a bilevel LP parametrized by $b$:
\begin{align}
\min_{x_1, x_2} \left\{ c_{1 1}^{\top} x_{1} + c_{1 2}^{\top} x_{2} :
x_2 \in \Argmin_{x_2'} \left\{
c_{2 2}^{\top} x_{2}'
:
A_{2 1} x_{1} + A_{2 2} x_{2}' \ge b
\right\}
\right\}.
\tag{\EqtagSensitivity{(b)}}
\label{eq:LabelSensitivity}
\end{align}
We write $\SensitivityOptValFunc{(b)} = \OptValFunc\eqref{eq:LabelSensitivity}$, $\SensitivityFeasSolSet{(b)} = \FeasSolSet\eqref{eq:LabelSensitivity}$ and $\SensitivityOptSolSet{(b)} = \OptSolSet\eqref{eq:LabelSensitivity}$.

We begin by showing that $\SensitivityFeasSolSet{}$, the set-valued function mapping $b$ to the feasible set, is continuous in $b$.
Informally, given two sufficiently close vectors $b^{(1)}$ and $b^{(2)}$, if $(x_1, x_2)$ is feasible for~\eqref{eq:LabelSensitivity} with $b = b^{(1)}$, then there exists a feasible point for~\eqref{eq:LabelSensitivity} with $b = b^{(2)}$ that is close to $(x_1, x_2)$, provided~\eqref{eq:LabelSensitivity} with $b = b^{(2)}$ is feasible.

To formalize this argument, we adopt the notation from \MyCite{AubinAndFrankowska2009}.
Given a set-valued map $\SensitivitySetValuedMapExampleMap$ from $\mathbb{R}^{\NConstraints}$ to $\mathbb{R}^{\NVariables}$, the domain and the graph of $\SensitivitySetValuedMapExampleMap$ are given by
$
\SensitivityDomain{}(\SensitivitySetValuedMapExampleMap)
=
\{
b
\in \mathbb{R}^{\NConstraints}
:
\SensitivitySetValuedMapExampleMap(b) \not= \emptyset
\}
$
and
$
\SensitivityGraph{}(\SensitivitySetValuedMapExampleMap)
=
\{
(b, x)
\in \mathbb{R}^{\NConstraints \times \NVariables}
:
x \in \SensitivitySetValuedMapExampleMap(b)
\},
$
respectively.
We say $F$ is Lipschitz continuous if and only if there exists $M$ such that for any $b, b' \in \SensitivityDomain{}(\SensitivitySetValuedMapExampleMap)$,
$
\SensitivitySetValuedMapExampleMap(b)
\subset
\SensitivitySetValuedMapExampleMap(b')
+
M
\| b - b' \|_2 B,
$
where $B \subset \mathbb{R}^{\NVariables}$ is the unit ball.

\begin{lemma}
\label{lemma:lipschitz-continuity-of-blp-feasible-set-function}
\mbox{}
\begin{enumerate}
\item
The domain $\SensitivityDomain{}(\SensitivityFeasSolSet{})$ is closed.
\item
The set-valued function $\SensitivityFeasSolSet{}$ is Lipschitz continuous.
\end{enumerate}
\end{lemma}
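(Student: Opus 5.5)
Write $P(b) = \{ (x_1,x_2) : A_{21}x_1 + A_{22}x_2 \ge b \}$ and $\phi(x_1,b) = \min_{x_2'} \{ c_{22}^\top x_2' : A_{22}x_2' \ge b - A_{21}x_1 \}$. Then $\SensitivityFeasSolSet{(b)}$ is the set of $(x_1,x_2) \in P(b)$ with $c_{22}^\top x_2 = \phi(x_1,b)$. Because there are no linking constraints, the leader places no restriction on $x_1$.

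\textbf{Step 1: recession argument.} Whether the follower's LP is bounded depends only on $A_{22}$ and $c_{22}$, not on the right-hand side. By LP duality, the follower's problem is bounded whenever it is feasible if and only if $c_{22} \in \{A_{22}^\top y : y \ge \ZeroVector\}$. Hence there are two cases.
\begin{enumerate}
\item If this cone condition fails, every feasible follower problem is unbounded. Then $\SensitivityFeasSolSet{(b)} = \emptyset$ for all $b$, and both claims are trivial.
\item If it holds, then $\SensitivityFeasSolSet{(b)} \neq \emptyset$ if and only if $P(b) \neq \emptyset$.
\end{enumerate}
In the second case, $\SensitivityDomain{}(\SensitivityFeasSolSet{}) = \{ b : \exists (x_1,x_2),\ A_{21}x_1 + A_{22}x_2 \ge b \}$. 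This is a projection of a polyhedron, hence a polyhedral cone, hence closed. This proves the first claim.

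\textbf{Step 2: Lipschitz estimate.} Fix $b, b' \in \SensitivityDomain{}(\SensitivityFeasSolSet{})$ and $(x_1,x_2) \in \SensitivityFeasSolSet{(b)}$. The plan is to find a nearby feasible point for $b'$ in two moves.
\begin{enumerate}
\item First move only $x_1$. Apply Lemma~\ref{lemma:lp-sensitivity}(ii) to the feasibility LP (zero objective) in the variables $(x_1,x_2)$ with matrix $(A_{21}\ A_{22})$. Since $(x_1,x_2) \in P(b)$ and $P(b') \neq \emptyset$, this gives a point $(\tilde x_1, \tilde x_2) \in P(b')$ with $\|x_1 - \tilde x_1\|_\infty \le n\Delta_1 \|b-b'\|_\infty$.
\item Then reoptimize the follower. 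With $x_1$ replaced by $\tilde x_1$, the follower's right-hand side changes from $b - A_{21}x_1$ to $b' - A_{21}\tilde x_1$. The change is at most $(1 + \|A_{21}\|_\infty n\Delta_1)\|b-b'\|_\infty$. Both follower LPs are feasible, and they are bounded by the cone condition, so their optima are finite.
\item Apply Lemma~\ref{lemma:lp-sensitivity}(ii) to the follower LP with matrix $A_{22}$. The vector $x_2$ is optimal for the first right-hand side, so there is an optimal $x_2''$ for the second right-hand side with $\|x_2 - x_2''\|_\infty \le n\Delta_2 (1 + \|A_{21}\|_\infty n\Delta_1)\|b-b'\|_\infty$.
\end{enumerate}
The point $(\tilde x_1, x_2'') \in \SensitivityFeasSolSet{(b')}$ lies within $M\|b-b'\|_\infty$ of $(x_1,x_2)$. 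The constant $M$ depends only on $A$ and $n$, and converting between norms costs only a dimension factor.

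\textbf{Main obstacle.} The step to check most carefully is the first move. Lemma~\ref{lemma:lp-sensitivity}(ii) requires finite optimal values, and the zero objective gives exactly that whenever the feasibility LP is feasible. The key point is that $\Delta$ depends only on the matrix, so the bound is uniform in $b$. The other delicate point is making sure the follower's optimum exists at both right-hand sides. This is exactly what the right-hand-side-independent boundedness criterion of Step 1 provides, and it is where the absence of linking constraints is essential.
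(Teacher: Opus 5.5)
Your proof is correct. For the Lipschitz claim (ii) you follow essentially the paper's argument:
\begin{enumerate}
\item Apply Lemma~\ref{lemma:lp-sensitivity} to the joint system $A_{21}x_1 + A_{22}x_2 \ge b$ to move to a nearby point that is feasible for $b'$.
\item Re-optimize the follower with Lemma~\ref{lemma:lp-sensitivity}(ii) applied to $A_{22}$.
\end{enumerate}
The paper justifies finiteness of the second follower LP by noting that its unboundedness would force unboundedness of the first. You instead invoke the equivalent right-hand-side-independent criterion $c_{22} \in \{A_{22}^\top y : y \ge \ZeroVector\}$; these are the same fact.

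For the closedness claim (i) you take a different route. The paper cites the representation of Basu et al.\ of $\mathrm{graph}(F)$ as a finite union $\bigcup_{\omega \in \Omega(A,c)} U(\omega, A, b, c)$ over complementary-slackness patterns, and concludes that $\mathrm{dom}(F)$ is a finite union of projections of polyhedra. You use the dual-feasibility dichotomy to show that $\mathrm{dom}(F)$ is either empty or the projection of the single polyhedron $\{(b,x_1,x_2) : A_{21}x_1 + A_{22}x_2 \ge b\}$.

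\textbf{What each route buys.} Your version is self-contained and reuses the same dichotomy you need for (ii). It also gives a sharper conclusion: $\mathrm{dom}(F)$ is a polyhedral cone, in particular convex. The paper's representation describes the whole graph, not just the domain, as a finite union of polyhedra. That would give closedness of $\mathrm{graph}(F)$ directly, whereas the paper later obtains it in Theorem~\ref{theorem:closedness-of-blp-solution-set}(i) via an upper-semicontinuity argument. It would also keep working if linking constraints were added to the leader's problem, where the domain need no longer be the projection of a single polyhedron.
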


\begin{proof}
\begin{enumerate}
\item
Following~\MyCitet{Basu \EtAl{}}{BasuEtAl2021}, we have
$$
\SensitivityGraph{}(\SensitivityFeasSolSet{})
\allowbreak
=
\bigcup_{\omega \in \BLPLowerLevelPolyhedraIndexSet(A, c)}\GraphOfBLPLowerLevelPolyhedra(\omega, A, b, c),
$$
where
$$
\BLPLowerLevelPolyhedraIndexSet(A, c) = \left\{
\omega \in \{0, 1\}^{\NConstraints} :
\exists
\lambda \ge \ZeroVector
\text{ s.t.\ }
A_{2 2}^{\top} \lambda = c_{2 2},
(\OneVector - \omega)^{\top} \lambda = 0
\right\}
$$
and
$$
\GraphOfBLPLowerLevelPolyhedra(\omega, A, b, c)
= \left\{ \begin{pmatrix}b \\ x_1 \\ x_2\end{pmatrix} :
\begin{array}{l}
\displaystyle
A_{2 1} x_1 + A_{2 2} x_2 \ge b, \\
\displaystyle
\omega^{\top} (b - A_{2 1} x_1 - A_{2 2} x_2) \ge 0
\end{array}
\right\}.
$$
Thus,
\begin{align*}
\SensitivityDomain{}(\SensitivityFeasSolSet{})
=
\mathrm{proj}_{b} \left(
\bigcup_{\omega \in \BLPLowerLevelPolyhedraIndexSet(A, c)}
\GraphOfBLPLowerLevelPolyhedra(\omega, A, b, c)
\right)
=
\bigcup_{\omega \in \BLPLowerLevelPolyhedraIndexSet(A, c)} \mathrm{proj}_{b} \left(
\GraphOfBLPLowerLevelPolyhedra(\omega, A, b, c)
\right),
\end{align*}
which is closed.
\item
Let $b^{(1)}, b^{(2)} \in \SensitivityDomain{}(\SensitivityFeasSolSet{})$.
We show
$
\SensitivityFeasSolSet{(b^{(1)})}
\subset
\SensitivityFeasSolSet{(b^{(2)})}
+
M
\| b^{(1)} - b^{(2)} \|_2 B
$
for some constant $M$.
Pick any real point $(x_1^{(1)}, x_2^{(1)}) \in \SensitivityFeasSolSet{(b^{(1)})}$.
In particular, $A_{2 1} x_1^{(1)} + A_{2 2} x_2^{(1)} \ge b^{(1)}$.
Since $A_{2 1} x_1 + A_{2 2} x_2 \ge b^{(2)}$ is feasible, by Lemma~\ref{lemma:lp-sensitivity}, there exists $(\bar{x}_1^{(2)}, \bar{x}_2^{(2)})$ such that $A_{2 1} \bar{x}_1^{(2)} + A_{2 2} \bar{x}_2^{(2)} \ge b^{(2)}$ and $\| (x_1^{(1)}, x_2^{(1)}) - (\bar{x}_1^{(2)}, \bar{x}_2^{(2)}) \|_{\infty} \le M' \| b^{(1)} - b^{(2)} \|_{\infty}$, where where $M'$ is the constant described in Lemma~\ref{lemma:lp-sensitivity}.

Now, observe that $(x_1^{(1)}, x_2^{(1)}) \in \SensitivityFeasSolSet{(b^{(1)})}$ implies that $x_2^{(1)}$ is optimal for the lower-level problem with $b = b^{(1)}$ and $x_1 = x_1^{(1)}$:
\begin{equation}
\label{eq:lemma:lipschitz-continuity-of-blp-feasible-set-function-first-lp}
\min_{x_2} \{
c_{2 2}^{\top} x_2
:
A_{2 1} x_1^{(1)} + A_{2 2} x_2 \ge b^{(1)} \}.
\end{equation}
Next, consider the lower-level problem with $b = b^{(2)}$ and $x_1 = \bar{x}_1^{(2)}$:
\begin{equation}
\label{eq:lemma:lipschitz-continuity-of-blp-feasible-set-function-second-lp}
\min_{x_2} \{
c_{2 2}^{\top} x_2
:
A_{2 1} \bar{x}_1^{(2)} + A_{2 2} x_2 \ge b^{(2)} \}.
\end{equation}
Note that $\bar{x}_2^{(2)}$ is feasible for~\eqref{eq:lemma:lipschitz-continuity-of-blp-feasible-set-function-second-lp}.
Furthermore, \eqref{eq:lemma:lipschitz-continuity-of-blp-feasible-set-function-second-lp} cannot be unbounded, as otherwise it would imply the unboundedness of~\eqref{eq:lemma:lipschitz-continuity-of-blp-feasible-set-function-first-lp}.
Instances~\eqref{eq:lemma:lipschitz-continuity-of-blp-feasible-set-function-first-lp} and~\eqref{eq:lemma:lipschitz-continuity-of-blp-feasible-set-function-second-lp} are LPs that only differ in their RHS: the RHS of \eqref{eq:lemma:lipschitz-continuity-of-blp-feasible-set-function-first-lp} is $b^{(1)} - A_{2 1} x_1^{(1)}$ while that of \eqref{eq:lemma:lipschitz-continuity-of-blp-feasible-set-function-second-lp} is $b^{(2)} - A_{2 1} \bar{x}_1^{(2)}$.
Thus, by Lemma~\ref{lemma:lp-sensitivity} there is an optimal solution $\hat{x}_2^{(2)}$ for~\eqref{eq:lemma:lipschitz-continuity-of-blp-feasible-set-function-second-lp} such that
\begin{align*}
\| \hat{x}_2^{(2)} - x_2^{(1)} \|_\infty
& \le M'' \| (b^{(1)} - A_{2 1} x_1^{(1)}) - (b^{(2)} - A_{2 1} \bar{x}_1^{(2)}) \|_\infty \\
& \le M'' ( \| b^{(1)} - b^{(2)} \|_\infty + \| A_{2 1} \|_\infty \cdot \| x_1^{(1)} - \bar{x}_1^{(2)} \|_\infty ) \\
& \le M'' ( 1 + \| A_{2 1} \|_\infty M' ) \| b^{(1)} - b^{(2)} \|_\infty,
\end{align*}
where $M''$ is the constant given in Lemma~\ref{lemma:lp-sensitivity}.
Now, observe that $(\bar{x}_1^{(2)}, \hat{x}_2^{(2)}) \in \SensitivityFeasSolSet{(b^{(2)})}$.
Using the equivalence of norms $\| \cdot \|_2$ and $\| \cdot \|_\infty$, we have
\begin{align*}
\left\|
\begin{pmatrix}
x_1^{(1)} \\ x_2^{(1)}
\end{pmatrix}
-
\begin{pmatrix}
\bar{x}_1^{(2)} \\ \hat{x}_2^{(2)}
\end{pmatrix}
\right\|_2
\le
M
\| b^{(1)} - b^{(2)} \|_2,
\end{align*}
where $M$ is a constant depending on $M'$ and $M''$.
Since $(x_1^{(1)}, x_2^{(1)})$ was an arbitrary point in $\SensitivityFeasSolSet{(b^{(1)})}$, it follows that
$
\SensitivityFeasSolSet{(b^{(1)})}
\subset
\SensitivityFeasSolSet{(b^{(2)})}
+
M
\| b^{(1)} - b^{(2)} \|_2 B.
$
\MyQED
\end{enumerate}
\end{proof}

Next, we prove the continuity of the value function $\SensitivityOptValFunc{}$.
Given a function $\SensitivitySetValuedMapExampleFunc$ from $\mathbb{R}^{\NConstraints}$ to $\mathbb{R} \cup \{ +\infty, -\infty\}$, we define the effective domain of $\SensitivitySetValuedMapExampleFunc$ as
$
\SensitivityDomain{}(\SensitivitySetValuedMapExampleFunc) = \{ b \in \mathbb{R}^{\NConstraints} : \SensitivitySetValuedMapExampleFunc(b) < \infty \}.
$
Furthermore, we say that a function $\SensitivitySetValuedMapExampleFunc$ is Lipschitz continuous if and only if $f(b) \not= -\infty$ for all $b$ and there exists $N$ such that
$
| \SensitivitySetValuedMapExampleFunc(b)
-
\SensitivitySetValuedMapExampleFunc(b')
|
\le
N \| b - b' \|_2
$
for any $b, b' \in \SensitivityDomain{}(\SensitivitySetValuedMapExampleFunc)$.
The following theorem establishes the continuity of $\SensitivityOptValFunc{}$.
\begin{theorem}
\label{theorem:lipschitz-continuity-of-blp-value-function}
\mbox{}
\begin{enumerate}
\item
The effective domain $\SensitivityDomain{}(\SensitivityOptValFunc{})$ is closed.
\item
If there exists $b$ such that $\SensitivityOptValFunc{(b)} = -\infty$, then for all $b$, $\SensitivityOptValFunc{(b)} = -\infty$ or $\infty$.
\item
Suppose $\SensitivityOptValFunc{(b)} > -\infty$ for all $b$.
Then, $\SensitivityOptValFunc{}$ is Lipschitz continuous.
\end{enumerate}
\end{theorem}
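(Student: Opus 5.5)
For part (i), we note that $V(b) < \infty$ holds exactly when $F(b) \neq \emptyset$. Hence $\mathrm{dom}(V) = \mathrm{dom}(F)$, which is closed by Lemma~\ref{lemma:lipschitz-continuity-of-blp-feasible-set-function}(i).

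For part (ii), we would first handle the lower level. For every $(b, x_1)$ at which the follower problem is feasible and bounded, the set of follower optimal solutions is a face of the polyhedron $\{x_2 : A_{21}x_1 + A_{22}x_2 \ge b\}$. The recession cone of this face is $\{d : A_{22}d \ge 0,\ c_{22}^\top d = 0\}$, which does not depend on $(b, x_1)$ because boundedness of the follower LP forces $c_{22}^\top d \ge 0$ on $\{A_{22}d\ge0\}$.

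Now suppose $V(b^0) = -\infty$ for some $b^0$, and take any $b$ with $V(b) < \infty$. Our target is $V(b) = -\infty$. Since $W(b^0)$ is an LP over a finite union of polyhedra (the pieces $U(\omega,\cdot)$ from the proof of Lemma~\ref{lemma:lipschitz-continuity-of-blp-feasible-set-function}), unboundedness gives a piece $\omega$ and a direction $(d_1, d_2)$ with three properties: $A_{21}d_1 + A_{22}d_2 \ge 0$, $\omega^\top(-A_{21}d_1 - A_{22}d_2) \ge 0$, and $c_{11}^\top d_1 + c_{12}^\top d_2 < 0$. These recession conditions are homogeneous and independent of $b$. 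So whenever the polyhedron $\{(x_1, x_2) : (b, x_1, x_2) \in U(\omega, A, b, c)\}$ is nonempty, it is also unbounded in the objective.

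The remaining difficulty is that $F(b)$ may be nonempty only via some other piece $\omega'$. The plan for this is to rely on the structure of $\Omega(A,c)$. For a fixed $\omega$, $U(\omega,\cdot)$ contains every point whose active constraint set includes the support of some dual optimal $\lambda$. So we pick any $(x_1, x_2) \in F(b)$ and move along $(d_1, d_2)$. The direction keeps primal feasibility, since $A_{21}d_1 + A_{22}d_2 \ge 0$. It also keeps the constraints indexed by $\omega$ tight, since $\omega^\top(A_{21}d_1 + A_{22}d_2) = 0$. What has to be shown is that the dual multiplier certifying optimality of $x_2$ stays valid. We would attempt this by arguing that for large step $s$ we can combine the two pieces: use the $\omega$-multiplier and the point $x + s d$ plus a bounded correction obtained from Lemma~\ref{lemma:lp-sensitivity}, where the correction restores lower-level optimality. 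This step is the main obstacle. A cleaner alternative we would try first is to replace direct combination by a Lipschitz argument. By Lemma~\ref{lemma:lipschitz-continuity-of-blp-feasible-set-function}(ii), applied with both $b$ and $b^0$ in $\mathrm{dom}(F)$, every point of $F(b^0)$ lies within $M\|b - b^0\|_2$ of a point of $F(b)$. Taking feasible points of $W(b^0)$ with objective values tending to $-\infty$ then gives points of $F(b)$ whose objective values differ by at most $\|(c_{11}, c_{12})\|_2 M \|b - b^0\|_2$, hence also tending to $-\infty$. This alternative argument actually suffices on its own.

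For part (iii), we would reuse the same estimate symmetrically. Take $b, b' \in \mathrm{dom}(V)$ and $\epsilon > 0$, and pick $x \in F(b)$ with objective value at most $V(b) + \epsilon$. Lemma~\ref{lemma:lipschitz-continuity-of-blp-feasible-set-function}(ii) provides $x' \in F(b')$ with $\|x - x'\|_2 \le M\|b - b'\|_2$. Therefore $V(b') \le V(b) + \epsilon + \|(c_{11}, c_{12})\|_2 M \|b - b'\|_2$. We then let $\epsilon \to 0$ and swap the roles of $b$ and $b'$, which gives the constant $N = M\|(c_{11}, c_{12})\|_2$. Finiteness of $V$ on its domain comes from the hypothesis $V > -\infty$.
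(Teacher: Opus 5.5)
Your final argument is correct and is essentially the paper's proof. Part (i) goes through $\mathrm{dom}(V)=\mathrm{dom}(F)$ and Lemma~\ref{lemma:lipschitz-continuity-of-blp-feasible-set-function}(i), and parts (ii) and (iii) use the Lipschitz continuity of $F$ from Lemma~\ref{lemma:lipschitz-continuity-of-blp-feasible-set-function}(ii) to move feasible points between any two right-hand sides in $\mathrm{dom}(F)$, changing the objective by at most $\|(c_{11}, c_{12})\|_2 M \|b-b'\|_2$. The preliminary face/recession-cone discussion in part (ii) is unnecessary, and its ``combine the two pieces'' step is never proven, so drop it and present the Lipschitz argument directly.
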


\begin{proof}
Assertion~(i) follows from Lemma~\ref{lemma:lipschitz-continuity-of-blp-feasible-set-function} since $\SensitivityDomain{}(\SensitivityOptValFunc{}) = \SensitivityDomain{}(\SensitivityFeasSolSet{})$.
In the following, we prove assertions~(ii) and~(iii) at once.

We assume $\SensitivityDomain{}(\SensitivityFeasSolSet{}) \ne \emptyset$, for otherwise the statements hold trivially.
Pick any $b^{(1)}$, $b^{(2)} \in \SensitivityDomain{}(\SensitivityFeasSolSet{})$ and $(x^{(1)}_1, x^{(1)}_2) \in \SensitivityFeasSolSet{(b^{(1)})}$.
Then, by Lemma~\ref{lemma:lipschitz-continuity-of-blp-feasible-set-function}, there is  $(x^{(2)}_1, x^{(2)}_2) \in \SensitivityFeasSolSet{(b^{(2)})}$ such that
\begin{equation*}
\left\|
\begin{pmatrix}
x^{(1)}_1 \\ x^{(1)}_2
\end{pmatrix}
-
\begin{pmatrix}
x^{(2)}_1 \\ x^{(2)}_2
\end{pmatrix}
\right\|_2
\le
M \| b^{(1)} - b^{(2)} \|_2.
\end{equation*}
The difference of the objective values of the two points can be bounded as
\begin{align*}
\left|
\sum_{j = 1}^2 c_{1 j}^{\top} x^{(1)}_j -
\sum_{j = 1}^2 c_{1 j}^{\top} x^{(2)}_j
\right|
&
\le
\left\|
\begin{pmatrix}
c_{1 1} \\ c_{1 2}
\end{pmatrix}
\right\|_2
\,
\left\|
\begin{pmatrix}
x^{(1)}_1 \\ x^{(1)}_2
\end{pmatrix}
-
\begin{pmatrix}
x^{(2)}_1 \\ x^{(2)}_2
\end{pmatrix}
\right\|_2
\\
&
\le
\left\|
\begin{pmatrix}
c_{1 1} \\ c_{1 2}
\end{pmatrix}
\right\|_2
M \| b^{(1)} - b^{(2)} \|_2.
\end{align*}
This implies that we have $\SensitivityOptValFunc{(b^{(2)})} \le \SensitivityOptValFunc{(b^{(1)})} + N \| b^{(1)} - b^{(2)} \|_2$ for some constant $N$.
Thus, assertions~(ii) and (iii) follow.
\MyQED
\end{proof}

Finally, we prove that the feasible and optimal solution sets are closed.

\begin{theorem}
\label{theorem:closedness-of-blp-solution-set}
\mbox{}
\begin{enumerate}
\item
The graph of the feasible solution set $\SensitivityGraph{}(\SensitivityFeasSolSet{})$ is closed.
\item
The graph of the optimal solution set $\SensitivityGraph{}(\SensitivityOptSolSet{})$ is closed.
\end{enumerate}
\end{theorem}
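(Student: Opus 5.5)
For assertion~(i), I will reuse the union-of-polyhedra description from the proof of Lemma~\ref{lemma:lipschitz-continuity-of-blp-feasible-set-function}(i):
$$
\SensitivityGraph{}(\SensitivityFeasSolSet{})=\bigcup_{\omega\in\BLPLowerLevelPolyhedraIndexSet(A,c)}\GraphOfBLPLowerLevelPolyhedra(\omega,A,b,c).
$$
Each $\GraphOfBLPLowerLevelPolyhedra(\omega,A,b,c)$ is defined by finitely many non-strict linear inequalities in the joint variable $(b,x_1,x_2)$, so it is closed. The index set $\BLPLowerLevelPolyhedraIndexSet(A,c)\subset\{0,1\}^{\NConstraints}$ is finite. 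A finite union of closed sets is closed, which gives~(i). For completeness I will recall why the union identity holds. A point $x_2$ is lower-level optimal iff the LP duality/complementary slackness conditions are satisfiable. Those conditions are exactly $\lambda\ge\ZeroVector$, $A_{22}^\top\lambda=c_{22}$, and $\lambda$ supported on constraints that are active at $(x_1,x_2)$. Choosing $\omega$ as the support indicator, the constraint $\omega^\top(b-A_{21}x_1-A_{22}x_2)\ge0$, combined with primal feasibility, forces every constraint with $\omega_i=1$ to be tight. Conversely, if all constraints with $\omega_i=1$ are tight, then any dual $\lambda$ supported on $\omega$ certifies optimality.

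For assertion~(ii), I will take a sequence $(b^{(j)},x^{(j)})\in\SensitivityGraph{}(\SensitivityOptSolSet{})$ converging to $(\bar b,\bar x)$ and show $\bar x\in\SensitivityOptSolSet{(\bar b)}$.

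First, by~(i), $\bar x\in\SensitivityFeasSolSet{(\bar b)}$. In particular $\bar b$ lies in the domain, and every $b^{(j)}$ does as well.

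Second, I will use Theorem~\ref{theorem:lipschitz-continuity-of-blp-value-function}. Since $\SensitivityOptSolSet{(b^{(j)})}\neq\emptyset$, the value $\SensitivityOptValFunc{(b^{(j)})}$ is finite. Part~(ii) of that theorem then rules out $\SensitivityOptValFunc{}=-\infty$ anywhere, so part~(iii) applies and $\SensitivityOptValFunc{}$ is Lipschitz on its (closed) effective domain. Hence
$$
c_1^\top\bar x=\lim_j c_1^\top x^{(j)}=\lim_j\SensitivityOptValFunc{(b^{(j)})}=\SensitivityOptValFunc{(\bar b)}.
$$
Here $c_1$ denotes the stacked vector $(c_{11},c_{12})$. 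A feasible point attaining the optimal value is optimal, so $\bar x\in\SensitivityOptSolSet{(\bar b)}$.

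The main delicate point is the second step, namely justifying continuity of $\SensitivityOptValFunc{}$ along the sequence. This needs the observation that nonemptiness of the optimal sets excludes the $-\infty$ case of Theorem~\ref{theorem:lipschitz-continuity-of-blp-value-function}. It also needs $\bar b$ to belong to the effective domain, which follows from closedness of the domain or directly from $\bar x\in\SensitivityFeasSolSet{(\bar b)}$.

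The degenerate case of an empty sequence of optimal sets is trivial: if no $b$ has $\SensitivityOptSolSet{(b)}\neq\emptyset$, then the graph is empty and hence closed.
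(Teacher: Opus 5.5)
Your proof is correct. Part (ii) is essentially the paper's argument: the limit is feasible by (i), the $-\infty$ case is excluded via Theorem~\ref{theorem:lipschitz-continuity-of-blp-value-function}(ii), and continuity of $V$ on its closed effective domain finishes. The only cosmetic difference is that the paper disposes of the degenerate cases up front, whereas you deduce finiteness from the existence of the sequence.

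Part (i), however, takes a different route. The paper argues abstractly. Lemma~\ref{lemma:lipschitz-continuity-of-blp-feasible-set-function} supplies the Lipschitz inclusion and a closed domain, each $F(b)$ is closed, and Proposition~1.4.8 of Aubin--Frankowska then gives a closed graph. You instead observe directly that $\mathrm{graph}(F)=\bigcup_{\omega\in\Omega(A,c)}U(\omega,A,b,c)$ is a finite union of polyhedra in the joint $(b,x_1,x_2)$-space.

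Your route is more economical, because the paper's proof that the domain is closed already rests on this same decomposition. It also gives the stronger fact that the graph is a finite union of polyhedra, not merely closed. It sidesteps a subtlety in the paper's route as well. When $F(b)$ is unbounded, the inclusion $F(b)\subset F(b')+M\|b-b'\|_2B$ gives upper semicontinuity only in the Hausdorff sense, not in the neighbourhood sense required by the cited proposition. A short sequence argument using closed values is what actually closes that step.

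The paper's route, in turn, keeps both parts inside the set-valued continuity framework of the section. It would apply verbatim to any map with a Lipschitz inclusion, closed domain and closed values, without needing an explicit polyhedral description. Finally, your complementary-slackness justification of the union identity is correct and supplies what the paper only cites from Basu et al.
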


\begin{proof}
\begin{enumerate}
\item
By Lemma~\ref{lemma:lipschitz-continuity-of-blp-feasible-set-function}, $\SensitivityFeasSolSet{}$ is upper semicontinuous and $\SensitivityDomain{}(\SensitivityFeasSolSet{})$ is closed.
Furthermore, for any real $b$, $\SensitivityFeasSolSet{(b)}$ is closed.
Invoke Proposition 1.4.8 of~\MyCite{AubinAndFrankowska2009} to see that $\SensitivityGraph{}(\SensitivityFeasSolSet{})$ is closed.
\item
If $\SensitivityDomain{}(\SensitivityOptValFunc{}) = \emptyset$, or if $\SensitivityOptValFunc{(b)} = -\infty$ for some $b \in \SensitivityDomain{}(\SensitivityOptValFunc{})$, $\SensitivityGraph{}(\SensitivityOptSolSet{}) = \emptyset$ and hence closed.
Thus, we assume $\SensitivityDomain{}(\SensitivityOptValFunc{}) \not= \emptyset$ and $\SensitivityOptValFunc{(b)} \not= -\infty$ for all $b \in \SensitivityDomain{}(\SensitivityOptValFunc{})$.

Let $\{ (x_1^{(i)}, x_2^{(i)}, b^{(i)}) : i = 1, 2, \ldots \}$ be a sequence in $\SensitivityGraph{}(\SensitivityOptSolSet{})$ such that $(x_1^{(i)}, x_2^{(i)}, b^{(i)}) \rightarrow (\bar{x}_1, \bar{x}_2, \bar{b})$.
Below, we prove that $(\bar{x}_1, \bar{x}_2, \bar{b}) \in \SensitivityGraph{}(\SensitivityOptSolSet{})$, or equivalently, i) $(\bar{x}_1, \bar{x}_2) \in \SensitivityFeasSolSet{(\bar{b})}$ (it is feasible) and ii) $c_{1 1}^{\top} \bar{x}_1 + c_{1 2}^{\top} \bar{x}_2 = \SensitivityOptValFunc{(\bar{b})}$ (it attains the optimal objective value).

Since $\SensitivityGraph{}(\SensitivityOptSolSet{}) \subset \SensitivityGraph{}(\SensitivityFeasSolSet{})$ and since $\SensitivityGraph{}(\SensitivityFeasSolSet{})$ is closed, we have $(\bar{x}_1, \bar{x}_2, \bar{b}) \in \SensitivityGraph{}(\SensitivityFeasSolSet{})$, implying $(\bar{x}_1, \bar{x}_2) \in \SensitivityFeasSolSet{(\bar{b})}$.

The continuity of $\SensitivityOptValFunc{}$ and the closedness of $\SensitivityDomain{}(\SensitivityOptValFunc{})$ (Theorem~\ref{theorem:lipschitz-continuity-of-blp-value-function}) implies that $\SensitivityOptValFunc{(b^{(i)})} \rightarrow \SensitivityOptValFunc{(\bar{b})}$.
Together with $\SensitivityOptValFunc{(b^{(i)})} = c_{1 1}^{\top} x_1^{(i)} + c_{1 2}^{\top} x_2^{(i)} \rightarrow c_{1 1}^{\top} \bar{x}_1 + c_{1 2}^{\top} \bar{x}_2$, we have $c_{1 1}^{\top} \bar{x}_1 + c_{1 2}^{\top} \bar{x}_2 = \SensitivityOptValFunc{(\bar{b})}$.

Thus, it follows that $(\bar{x}_1, \bar{x}_2) \in \SensitivityOptSolSet{(\bar{b})}$, and therefore $(\bar{x}_1, \bar{x}_2, \bar{b}) \in \SensitivityGraph{}(\SensitivityOptSolSet{})$.
\MyQED
\end{enumerate}
\end{proof}

As a consequence of (ii) in Theorem~\ref{theorem:closedness-of-blp-solution-set}, we obtain an immediate corollary.

\begin{corollary}
\label{cor:ClosednessOfTrilevelLPFeasibleSet}
The feasible set of a \DecisionProblemModifierNoLink{\KLPInstances{3}} instance is closed.
\end{corollary}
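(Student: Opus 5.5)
The plan is to write the feasible set of a trilevel instance satisfying condition~\ref{Condition1} as the graph of the optimal-solution map of a parametrized bilevel LP without linking constraints, and then apply assertion~(ii) of Theorem~\ref{theorem:closedness-of-blp-solution-set}.

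Under~\ref{Condition1}, the first and second players have no linear constraints, and the third player's constraint is $A_{31}x_1 + A_{32}x_2 + A_{33}x_3 \ge b_3$. A point $(x_1,x_2,x_3)$ is feasible for the trilevel instance if and only if $(x_2,x_3)$ is optimal for the second player's problem $\EqtagGeneralKLP{3}{2}{(x_1,A,b,c)}$. That problem is
\[
\min_{x_2,x_3}\left\{ c_{22}^\top x_2 + c_{23}^\top x_3 : x_3 \in \Argmin_{x_3'}\{ c_{33}^\top x_3' : A_{32}x_2 + A_{33}x_3' \ge b_3 - A_{31}x_1 \}\right\}.
\]
This is exactly the parametrized bilevel problem~\eqref{eq:LabelSensitivity}, with $x_2$ as the upper variable, $x_3$ as the lower variable, $c_{11}\leftarrow c_{22}$, $c_{12}\leftarrow c_{23}$, $c_{22}\leftarrow c_{33}$, $A_{21}\leftarrow A_{32}$, $A_{22}\leftarrow A_{33}$, and right-hand side $\beta = b_3 - A_{31}x_1$. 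It has no linking constraints in its upper level, as that section requires.

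It follows that the trilevel feasible set equals
\[
\{(x_1,x_2,x_3) : (\beta, x_2, x_3) \in \SensitivityGraph(\SensitivityOptSolSet),\ \beta = b_3 - A_{31}x_1\}.
\]
This is the preimage of $\SensitivityGraph(\SensitivityOptSolSet)$ under the continuous affine map $(x_1,x_2,x_3)\mapsto (b_3 - A_{31}x_1, x_2, x_3)$. By Theorem~\ref{theorem:closedness-of-blp-solution-set}(ii) that graph is closed, so its preimage under a continuous map is closed, which proves the corollary.

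The main point needing care is matching conventions. The value $\SensitivityOptValFunc(\beta)$ may be $-\infty$ or $+\infty$ for some $\beta$, and in that case the solution set is empty, which is consistent with infeasibility of the trilevel instance at that $x_1$. Theorem~\ref{theorem:closedness-of-blp-solution-set}(ii) already handles these degenerate cases. The other thing to check is the ordering of coordinates in the graph, $(b,x_1,x_2)$ versus $(x_1,x_2,b)$, which is only a permutation and does not affect closedness.
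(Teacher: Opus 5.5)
Your proposal is correct and follows the paper's approach: the paper presents this as an immediate consequence of Theorem~\ref{theorem:closedness-of-blp-solution-set}(ii). Your argument supplies the details the paper leaves implicit. The second player's problem is the parametrized bilevel LP without linking constraints with right-hand side $b_3 - A_{31}x_1$, so the trilevel feasible set is the preimage of the closed graph of the optimal-solution map under a continuous affine map.
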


\section{Membership Proofs}
\label{sec:membership_proofs}

In this section, we prove the memberships of the decision and search problems as summarized in Table~\ref{tab:summary}.
To that end, we will make use of the following result:
\begin{theorem}[\MyCite{Sugishita2026}]
\label{theorem:klp-feasibility-membership}
For $k \ge 2$, the following statements hold.
\begin{enumerate}
\item
The decision problem \DecisionProblemFeas{k} is in \ClassSigmaP{k - 1}.
\item
Every rational \KLPInstances{k} instance has a rational feasible solution of polynomial encoding size, provided it is feasible.
\item
Every rational \KLPInstances{k} instance has a rational optimal objective value of polynomial encoding size, provided it is finite.
\end{enumerate}
\end{theorem}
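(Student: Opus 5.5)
The statement is a theorem cited from \cite{Sugishita2026}, so the plan is to reconstruct its proof. The first step, and the one the other two rest on, is assertion~(ii): a feasible instance has a rational feasible solution of polynomial encoding size.

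\textbf{Assertion (ii).} I would argue by induction on $k$ using the structure of optimal-solution sets of multilevel LPs.
\begin{itemize}
\item For the $k$-th player's problem, the graph of $\OptSolSet$ as a function of $(x_1,\ldots,x_{k-1})$ is a finite union of polyhedra. Each piece is described by complementary slackness for a support pattern $\omega$, exactly as in the sets $\GraphOfBLPLowerLevelPolyhedra(\omega,A,b,c)$ in the proof of Lemma~\ref{lemma:lipschitz-continuity-of-blp-feasible-set-function}. The defining inequalities of each piece are subsystems of the original data, so their facet complexity is polynomial.
\item Inductively, the feasible region of the $l$-th player's problem, as a function of $(x_1,\ldots,x_{l-1})$, is a finite union of polyhedra, each with facet complexity polynomially bounded in the input size.
\item To get this, I would show that optimality for player $l$ can be certified piecewise: on each polyhedral piece of the lower-level feasible set, player $l$ minimizes a linear function. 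The set of parameters and points where a given piece attains the global minimum over all pieces is then described by linear inequalities coming from LP duality on each piece.
\end{itemize}
This step is the main obstacle. Taking the minimum over exponentially many pieces and projecting out the lower-level variables could blow up the facet complexity. The argument needs a bound saying that every vertex or extreme direction of every piece has polynomially bounded encoding size, for instance because each is determined by a subsystem of a polynomially sized inequality system.

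With such a bound, a standard argument applies: the first player's feasible set is a nonempty finite union of polyhedra with polynomial facet complexity, so it contains a vertex of some piece, or a point built from a minimal face, of polynomial size (Schrijver, Thm.~10.2).

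\textbf{Assertion (i).} Membership in \ClassSigmaP{k-1} would follow by guessing such a polynomial-size candidate $(x_1,\ldots,x_k)$ and verifying it. Verification checks that $(x_2,\ldots,x_k)$ is optimal for the second player given $x_1$. This is a universal statement, ``no feasible lower-level point does strictly better,'' whose witnesses can again be taken of polynomial size by (ii) applied to the $(k-1)$-level instance with $x_1$ fixed.

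The catch is that the $(k-1)$-level subproblem may be unbounded or fail to attain its infimum. So the check is phrased as: for all polynomial-size feasible $y$ of the second-level problem, $y$ is not better. This is a $\ClassPiP{k-2}$ predicate over a $\ClassSigmaP{k-2}$-checkable feasibility condition, giving \ClassSigmaP{k-1} overall.

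\textbf{Assertion (iii).} The optimal value, when finite, is attained or approached within the polynomial-size pieces from (ii). Each piece contributes an LP value of polynomial size, since it is an LP over a polyhedron of polynomial facet complexity. The infimum over finitely many pieces equals one of these values, hence it is rational of polynomial encoding size.
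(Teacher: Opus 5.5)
\textbf{The inductive hypothesis fails for $k \ge 3$.} You assume the graph of each level's feasible or optimal-solution map is a finite union of \emph{closed} polyhedra. The step that breaks is your third bullet. The paper's own trilevel example $\mathrm{BLP}$ in Section~\ref{subsec:motivational_example} has first-level feasible set $\{(x_1,0,0) : 0 \le x_1 < 1\} \cup \{(1,1,0)\}$, which is not closed. There the second player's feasible graph is the union of two closed pieces, $Q_1=\{x_2=x_3=0,\ x_1\ge 0\}$ and $Q_2=\{x_2=1,\ x_3=0,\ x_1\ge 1\}$. A point of $Q_1$ is optimal only when $Q_2(x_1)=\emptyset$, i.e.\ when $x_1<1$. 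In general, $z\in Q_i(p)$ is optimal iff, for every competing piece $j$, either $Q_j(p)=\emptyset$ or $c^\top z\le v_j(p)$. Here $v_j$ is the convex, piecewise-linear, extended-valued value function of the LP over $Q_j(p)$. This condition is not ``described by linear inequalities from LP duality''. It is a Boolean combination: a disjunction over dual vertices of $Q_j$, together with the complement of the projection of $Q_j$, and the latter introduces strict inequalities. So your pieces must be allowed to be sets defined by strict and non-strict linear inequalities. Your appeal to ``a vertex of some piece'' (Schrijver, Thm.~10.2) then no longer works, because a piece need not contain any vertex of its closure. Instead, use that a nonempty such set contains the relative interior of its closure, which has a polynomial-size point. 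With that correction, your argument for (iii) goes through, since the infimum over a piece equals the LP value over its closure.

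\textbf{The complexity bound is left unproven.} The bound you label ``the main obstacle'' is exactly the content of (ii). What closes the induction is that the \emph{number} of pieces is irrelevant. Every inequality created at level $l$ is one of three kinds:
\begin{enumerate}
\item an original constraint;
\item an objective cut $c_l^\top z \le \lambda^\top(d_j - N_j p)$, where $\lambda$ is a vertex of the LP dual of the closure of a single piece $Q_j(p)=\{z : M_j z \ge d_j - N_j p\}$ (some rows strict); since $p$ enters only the right-hand side, this dual polyhedron does not depend on $p$;
\item a strict or non-strict inequality from an extreme ray of the Motzkin-transposition cone certifying $Q_j(p)=\emptyset$, so no projection is ever computed.
\end{enumerate}
Each has encoding size polynomial in the previous level's facet complexity, and for fixed $k$ the $k$-fold composition stays polynomial.

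\textbf{Restricting the universal quantifier in (i) needs more than (ii).} You need: if some feasible second-level point is strictly better than $(x_2,\ldots,x_k)$, then a polynomial-size strictly better one exists. Assertion (ii) only yields \emph{some} polynomial-size feasible point. You need (iii) for the $(k-1)$-level instance with $x_1$ fixed. It lets you add the linking cut $\sum_{i\ge 2} c_{2i}^\top y_i \le t$, with $t$ strictly between that instance's infimum and $\sum_{i \ge 2} c_{2i}^\top x_i$ (taking $t = \sum_{i \ge 2} c_{2i}^\top x_i - 1$ if the infimum is $-\infty$). Then apply (ii) to the augmented instance. So (ii) and (iii) must be carried jointly through the induction.
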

Below, we prove tighter upper bounds for \DecisionProblemFeas{k} under various assumptions, as well as other results related to $\DecisionProblemAttain{k}$ and $\SearchProblemObj{k}$.
In Section~\ref{subsec:FeasibilityOfMultilevelLP}, we study the conditions for the feasibility of $k$-level LP.
Section~\ref{subsec:MainAnalysis} provides the main analysis.

\subsection{Feasibility of Multilevel LP for $k \le 4$}
\label{subsec:FeasibilityOfMultilevelLP}

In this section, we study the conditions for the feasibility of $k$-level LP.
Let $(A, b, c)$ be a rational \KLPInstances{2} instance.
By Theorem~\ref{theorem:klp-feasibility-membership}, there is a polynomial $\BLPPointSizeEstimate$ such that $\FeasSolSet\EqrefGeneralKLP{2}{1}{(A, b, c)} \not= \emptyset$ if and only if there exists a rational point $(x_1, x_2) \in \FeasSolSet\EqrefGeneralKLP{2}{1}{(A, b, c)}$ of encoding size at most $\BLPPointSizeEstimate(\sigma)$, where $\sigma$ is the encoding size of the input $(A, b, c)$.
Similarly, let $\TLPPointSizeEstimate$ be a polynomial that bounds the encoding size of a rational feasible solution of a rational feasible $\KLPInstances{3}$ instance.
Using this notation, we show the following result establishing that, under some conditions, the feasibility of a \KLPInstances{k} instance reduces to the existence of an optimal solution of a \KLPInstances{(k - 1)}.

\begin{lemma}
\label{lemma:EquivalentConditionsForFeasibility}
\mbox{}
\begin{enumerate}
\item
Let $(A, b, c)$ be a rational \DecisionProblemModifierNoLink{\KLPInstances{2}} instance and $\sigma$ be its encoding size.
It is feasible if and only if $\OptSolSet\eqref{lemma:EquivalentConditionsForFeasibility:TTwo} \ne \emptyset$, where
\begin{align*}
\tag{\ensuremath{T_2(A, b, c)}}
\label{lemma:EquivalentConditionsForFeasibility:TTwo}
\min_{x_1, x_2}
\left\{
c_{2 2}^{\top} x_2
:
\begin{array}{l}
-2^{\BLPPointSizeEstimate(\sigma)} \cdot \OneVector \le x_1 \le 2^{\BLPPointSizeEstimate(\sigma)} \cdot \OneVector,
\\
A_{2 1} x_1 + A_{2 2} x_2 \ge b_2
\end{array}
\right\}.
\end{align*}
\item
Let $(A, b, c)$ be a rational \DecisionProblemModifierNoLink{\KLPInstances{3}} instance and $\sigma$ be its encoding size.
It is feasible if and only if $\OptSolSet\eqref{lemma:EquivalentConditionsForFeasibility:TThree} \not= \emptyset$, where
\begin{align}
\tag{\ensuremath{T_3(A, b, c)}}
\label{lemma:EquivalentConditionsForFeasibility:TThree}
\min_{x_1, x_2, x_3}
\left\{
c_{2 2}^{\top} x_2
+ c_{2 3}^{\top} x_3
:
\begin{array}{l}
-2^{\TLPPointSizeEstimate(\sigma)} \cdot \OneVector \le x_1 \le 2^{\TLPPointSizeEstimate(\sigma)} \cdot \OneVector,
\\
x_3 \in
\OptSolSet\EqrefGeneralKLP{3}{3}{(x_1, x_2, A, b, c)}
\end{array}
\right\}.
\end{align}
\item
Let $(A, b, c)$ be a rational \DecisionProblemModifierNoLinkBoxed{\KLPInstances{4}} instance.
It is feasible if and only if $\OptSolSet\eqref{lemma:EquivalentConditionsForFeasibility:TFour} \ne \emptyset$, where
\begin{align*}
\tag{\ensuremath{T_4(A, b, c)}}
\label{lemma:EquivalentConditionsForFeasibility:TFour}
\min_{x_1, x_2, x_3, x_4}
\left\{
\sum_{i = 2}^4 c_{2 i}^{\top} x_i
:
\begin{pmatrix}
x_3 \\ x_4
\end{pmatrix}
\in
\OptSolSet\EqrefGeneralKLP{4}{3}{(x_1, x_2, A, b, c)}
\right\}.
\end{align*}
\end{enumerate}
\end{lemma}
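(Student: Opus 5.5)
The three parts share one structure. The level-$k$ instance is feasible exactly when there is some $x_1$ for which the second player's problem has an optimal solution. Each $T_k$ is the second player's problem with $x_1$ turned into a free decision variable, in some cases restricted to a box. The easy direction is the same in every part: an optimal solution of $T_k$ yields a feasible point of the original instance. The harder direction is to show that $T_k$ actually attains its infimum whenever the instance is feasible.

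\textbf{Converse direction (all parts).} Let $(x_1^*,x_2^*,\ldots)$ be optimal for $T_k$. Suppose $(x_2^*,\ldots,x_k^*)$ were not optimal for the second player's problem at $x_1^*$. Then there is a strictly better point $(x_2',\ldots,x_k')$ for that problem. Under~\ref{Condition1} there are no first-level constraints, so $(x_1^*,x_2',\ldots,x_k')$ is feasible for $T_k$: the box on $x_1$ is untouched, and the lower-level optimality requirements depend only on $(x_1^*,x_2')$. This point would have a strictly smaller $T_k$-objective, a contradiction. Hence $(x_2^*,\ldots,x_k^*)$ is optimal for the second player at $x_1^*$, and $(x_1^*,\ldots,x_k^*)$ is feasible for the original instance. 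In part~(i), $x_2$ lies in the second player's argmin; if the instance has a first-level constraint set, it is forwarded via Proposition~\ref{prop:constraint-forwarding}.

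\textbf{Forward direction, part (i).} If the instance is feasible, Theorem~\ref{theorem:klp-feasibility-membership} gives a feasible point of encoding size at most $\BLPPointSizeEstimate(\sigma)$. Its $x_1$ component lies in the box, so $T_2$ is a feasible LP. At that $x_1$ the lower-level LP has finite value, so by LP duality there is $\lambda\ge\ZeroVector$ with $A_{22}^\top\lambda=c_{22}$. Then every feasible point of $T_2$ satisfies
\[
c_{22}^\top x_2\ge\lambda^\top(b_2-A_{21}x_1),
\]
and the right-hand side is bounded below on the box. So $T_2$ is a bounded, feasible LP and attains its optimum.

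\textbf{Forward direction, part (ii).} Here $T_3$ is itself a bilevel LP without linking constraints: the leader is $(x_1,x_2)$ and the follower is $x_3$. The box on $x_1$ is a non-linking leader constraint, which Proposition~\ref{prop:constraint-forwarding} moves down to the follower. Feasibility of the original instance gives a feasible point of size at most $\TLPPointSizeEstimate(\sigma)$, hence a feasible point of $T_3$. For attainment I would use Theorem~\ref{theorem:closedness-of-blp-solution-set}(i): the feasible set of $T_3$ is closed. More precisely, it is a finite union of polyhedra, namely the sets $\GraphOfBLPLowerLevelPolyhedra(\omega,\ldots)$ from the proof of Lemma~\ref{lemma:lipschitz-continuity-of-blp-feasible-set-function}. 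A linear objective on a finite union of polyhedra attains its infimum as soon as it is bounded below, so it remains to show boundedness.

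Lower-boundedness will come from the feasible point $\bar x_1$. There the second player's bilevel problem has an optimal solution, hence a finite value. View the second player's problem at $x_1$ as the parametric problem~\eqref{eq:LabelSensitivity} with right-hand side $b_3-A_{31}x_1$. By Theorem~\ref{theorem:lipschitz-continuity-of-blp-value-function}(ii), its value is never $-\infty$, and by part~(iii) of that theorem it is Lipschitz on a closed domain. The $T_3$ value is the infimum of this function over the compact box, so it is finite and attained. I expect this step, combining value-function continuity with attainment on a union of polyhedra, to be the main technical point.

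\textbf{Forward direction, part (iii).} The feasible set of $T_4$ is the feasible set of the no-link trilevel instance whose leader is $(x_1,x_2)$ and whose followers are $x_3$ and $x_4$. By Corollary~\ref{cor:ClosednessOfTrilevelLPFeasibleSet} this set is closed, and by~\ref{Condition2} it is bounded, so it is compact. A continuous objective therefore attains its minimum whenever the set is nonempty. If the $4$-level instance is feasible, any feasible point has $(x_3,x_4)$ optimal for the third player at $(x_1,x_2)$, so it is feasible for $T_4$. Hence $T_4$ is nonempty and $\OptSolSet(T_4)\ne\emptyset$.
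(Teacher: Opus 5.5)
Your proposal is correct and follows essentially the paper's proof. It shows the inclusion $\OptSolSet(T_k)$ into the original feasible set, which you justify explicitly where the paper only asserts it. It then places a polynomial-size feasible point in the box, and obtains attainment from closedness plus boundedness: via Theorem~\ref{theorem:lipschitz-continuity-of-blp-value-function} for (ii), and via Corollary~\ref{cor:ClosednessOfTrilevelLPFeasibleSet} with condition~\ref{Condition2} for (iii). Your LP-duality bound in (i) is the elementary special case of the same value-function argument, and your use of the finite-union-of-polyhedra structure makes the attainment step in (ii) slightly more precise than the paper's appeal to closedness alone.
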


\begin{proof}
We prove assertion~(ii).
The other assertions can be shown similarly.

Let $(A, b, c)$ be a rational \DecisionProblemModifierNoLink{\KLPInstances{3}} instance and $\sigma$ be its encoding size.
Since $\OptSolSet\eqref{lemma:EquivalentConditionsForFeasibility:TThree} \subset \FeasSolSet\EqrefGeneralKLP{3}{1}{(A, b, c)}$, $\OptSolSet\eqref{lemma:EquivalentConditionsForFeasibility:TThree} \not= \emptyset$ implies $\FeasSolSet\EqrefGeneralKLP{3}{1}{(A, b, c)} \not= \emptyset$.
Thus, we need only prove the converse.

Let $(x_1', x_2', x_3') \in \FeasSolSet\EqrefGeneralKLP{3}{1}{(A, b, c)}$ be a rational point of encoding size less than or equal to $\TLPPointSizeEstimate(\sigma)$.
In particular, we have $\| x_1' \|_{\infty} \le 2^{\TLPPointSizeEstimate(\sigma)}$.
Thus, $(x_1', x_2', x_3')$ is feasible for~\eqref{lemma:EquivalentConditionsForFeasibility:TThree}.
Therefore, by Corollary~\ref{cor:ClosednessOfTrilevelLPFeasibleSet}, if it is not unbounded, it has an optimal solution (this follows because \KLPInstances{2} is a special case of \KLPInstances{3}, and hence the closedness result established for \KLPInstances{3} applies directly to \KLPInstances{2}).
In the following, we show that it is indeed not unbounded.

Since $(x_1', x_2', x_3') \in \FeasSolSet\EqrefGeneralKLP{3}{1}{(A, b, c)}$, $\OptValFunc\EqrefGeneralKLP{3}{2}{(x_1', A, b, c)}$ is finite.
Thus, from Theorem~\ref{theorem:lipschitz-continuity-of-blp-value-function}, it follows that $\OptValFunc\EqrefGeneralKLP{3}{2}{(x_1, A, b, c)} > -\infty$ for any $x_1 \in \mathbb{R}^{\NVariables}$.
Now, let $V = \{ x_1 \in \mathbb{R}^{\NVariables} : -2^{\TLPPointSizeEstimate(\sigma)} \cdot \OneVector \le x_1 \le 2^{\TLPPointSizeEstimate(\sigma)} \cdot \OneVector\}$ and observe that
\begin{align*}
-\infty
&<
\min_{
x_1 \in V
}
\OptValFunc\EqrefGeneralKLP{3}{2}{(x_1, A, b, c)}
= \OptValFunc\eqref{lemma:EquivalentConditionsForFeasibility:TThree}.
\end{align*}
Thus,~\eqref{lemma:EquivalentConditionsForFeasibility:TThree} is not unbounded and has an optimal solution.
\MyQED
\end{proof}

Note that in Lemma~\ref{lemma:EquivalentConditionsForFeasibility}, we need to add condition~\ref{Condition2} in assertion~(iii) since we do not have a version of Theorem~\ref{theorem:lipschitz-continuity-of-blp-value-function} for $\DecisionProblemModifierNoLink{\KLPInstances{3}}$.

\subsection{Main Analysis}
\label{subsec:MainAnalysis}

We begin with $\DecisionProblemAttain{k}$ and $\SearchProblemObj{k}$.
\begin{theorem}
\mbox{}
\begin{enumerate}
\item
For $k \ge 2$, the search problem \SearchProblemObj{k} is in $\ClassFDeltaP{k}$.
\item
For $k \ge 2$, the decision problem \DecisionProblemAttain{k} is in $\ClassDeltaP{k}$.
\end{enumerate}
\end{theorem}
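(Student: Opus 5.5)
We prove (i) first and then use it for (ii). The only tools needed are the oracle problems \DecisionProblemFeas{k}, \DecisionProblemUnb{k} and \DecisionProblemVal{k}. All three lie in \ClassSigmaP{k-1}, by Theorem~\ref{theorem:klp-feasibility-membership} and the theorem of \cite{Jeroslow1985, Sugishita2026} restated above. Together with the encoding-size bound of Theorem~\ref{theorem:klp-feasibility-membership}(iii), this lets a polynomial-time machine with a \ClassSigmaP{k-1} oracle locate the optimal value exactly.

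For (i), given a rational instance $(A, b, c)$ of encoding size $\sigma$, the machine works as follows.
\begin{enumerate}
\item It queries \DecisionProblemFeas{k}. If the instance is infeasible it returns ``no''.
\item Otherwise it queries \DecisionProblemUnb{k}. If the instance is unbounded it returns ``no''.
\item Otherwise $\OptValFunc\EqrefGeneralKLP{k}{1}{(A, b, c)} =: v^*$ is finite. By Theorem~\ref{theorem:klp-feasibility-membership}(iii), $v^*$ is a rational $p/q$ with $|p|, q \le 2^{\phi(\sigma)}$ for some polynomial $\phi$. Set $D = 2^{\phi(\sigma)}$.
\end{enumerate}
Step~3 is then a binary search with the \DecisionProblemVal{k} oracle on thresholds $t$. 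The key observation is how the oracle behaves relative to $v^*$: it answers ``yes'' for every $t > v^*$ and ``no'' for every $t < v^*$. At $t = v^*$ the answer depends on whether the infimum is attained. We do not need to resolve that case, because of the invariant we maintain.

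\begin{itemize}
\item Keep rationals $\ell < h$ such that the query at $\ell$ answered ``no'' and the query at $h$ answered ``yes''. These imply $\ell \le v^*$ and $v^* \le h$ respectively, regardless of attainment.
\item Start with $\ell = -D-1$ and $h = D+1$. For these two endpoints the oracle answers are forced, since $|v^*| \le D$.
\item Bisect until $h - \ell < 1/(2D^2)$. This takes $O(\phi(\sigma))$ queries, and every threshold stays of polynomial encoding size.
\item The interval $[\ell, h]$ now contains at most one rational with denominator at most $D$, and that rational is $v^*$. We recover it in polynomial time by the continued-fraction (best-approximation) procedure, as in \cite{Schrijver1998}, Corollary~6.3a.
\end{itemize}
The whole procedure makes polynomially many adaptive \ClassSigmaP{k-1} queries, so \SearchProblemObj{k} is in $\ClassFDeltaP{k}$.

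For (ii), we first run the procedure of (i). If it returns ``no'', no optimal solution exists and we reject. Otherwise we hold the exact value $v^*$ and make one more query: \DecisionProblemVal{k} with threshold $t = v^*$. No feasible point has objective value below $v^*$, so a ``yes'' answer means exactly that some feasible solution attains $v^*$, that is, the instance has an optimal solution. This is a polynomial-time computation with a \ClassSigmaP{k-1} oracle, so \DecisionProblemAttain{k} is in $\ClassP{}^{\ClassSigmaP{k-1}} = \ClassDeltaP{k}$.

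The step that needs the most care is step~3 of (i). The binary search must not depend on the oracle's unknown answer at $t = v^*$, which the invariant above handles. We also need the explicit a priori bound $D$ from Theorem~\ref{theorem:klp-feasibility-membership}(iii), which gives both the search range and the denominator bound. This is what makes the final rounding to $v^*$ exact and keeps every query of polynomial size.
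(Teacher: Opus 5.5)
Your proposal is correct and follows the same route as the paper. You rule out infeasibility and unboundedness with \ClassSigmaP{k-1} queries, binary-search the value with the threshold oracle, and round by continued fractions; for (ii), your extra query at $t = v^*$ is exactly the paper's feasibility check with the added cut ``objective $\le v^*$''. Your denominator-based stopping tolerance $1/(2D^2)$ is the correct one, whereas the paper's claimed gap $1/2^{\phi_k(\sigma)+1}$ is too optimistic, since distinct rationals of size at most $\phi_k(\sigma)$ can be only about $2^{-2\phi_k(\sigma)}$ apart; your version quietly repairs that detail.
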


\begin{proof}
\begin{enumerate}
\item
Fix $k \ge 2$ and let $(A, b, c)$ be a rational $\KLPInstances{k}$ instance.
We construct a polynomial-time Turing machine with oracle access to $\ClassSigmaP{k-1}$ that computes $\OptValFunc\EqrefGeneralKLP{k}{1}{(A, b, c)}$. 
First, query the oracle to test if the instance is unbounded or infeasible.
If it is, output ``no''.
Otherwise, $\OptValFunc\EqrefGeneralKLP{k}{1}{(A, b, c)}$ is a rational number of encoding size bounded by $\KLPValueSizeEstimate(\sigma)$, where $\KLPValueSizeEstimate$ is a polynomial and $\sigma$ is the encoding size of the input (Theorem~\ref{theorem:klp-feasibility-membership}).
Note that we have $l \le \OptValFunc\EqrefGeneralKLP{k}{1}{(A, b, c)} \le u$, where $l = -2^{\KLPValueSizeEstimate(\sigma)}$ and $u = 2^{\KLPValueSizeEstimate(\sigma)}$.
We run a binary search to find $\OptValFunc\EqrefGeneralKLP{k}{1}{(A, b, c)}$.
First, query the oracle if $\OptValFunc\EqrefGeneralKLP{k}{1}{(A, b, c)} \le 0$.
If the oracle returns ``yes'', $-2^{\KLPValueSizeEstimate(\sigma)} \le \OptValFunc\EqrefGeneralKLP{k}{1}{(A, b, c)} \le 0$, so update $u$ to be $0$.
Otherwise, $0 \le \OptValFunc\EqrefGeneralKLP{k}{1}{(A, b, c)} \le 2^{\KLPValueSizeEstimate(\sigma)} $, so update $l$ to be $0$.
We repeatedly query the oracle if $\OptValFunc\EqrefGeneralKLP{k}{1}{(A, b, c)} \le (l + u) / 2$ and update $l$ and $u$ accordingly, until we have $u - l < 1 / 2^{\KLPValueSizeEstimate(\sigma) + 1}$.
There is only one rational number of encoding size at most $\KLPValueSizeEstimate(\sigma)$ in $[l, u]$, for any two distinct rational numbers $a, b$ of encoding size at most $\KLPValueSizeEstimate(\sigma)$ satisfy $|a - b| \ge 1/2^{\KLPValueSizeEstimate(\sigma) + 1}$.
Thus, if the encoding size of $l$ or $u$ is less than or equal to $\KLPValueSizeEstimate(\sigma)$, output that value.
Otherwise, use the continued fraction method~\MyCite{Schrijver1998} to find the unique rational number of encoding size less than or equal to $\KLPValueSizeEstimate(\sigma)$ in the interval $[l, u]$, and output it.
The number of iterations of the binary search is polynomial in $\sigma$, and the above computation runs in polynomial time.
Thus, $\SearchProblemObj{k}$ is in \ClassFDeltaP{k}.

\item
It follows from assertion~(i) and the fact that \DecisionProblemFeas{k} is in \ClassSigmaP{k - 1}.
\end{enumerate}
\end{proof}



Under appropriate assumptions, we obtain significantly tighter upper bounds of the computational complexity of \DecisionProblemAttain{k}.

\begin{theorem}
\label{theorem:membership}
\mbox{}
\begin{enumerate}
\item
The decision problem \DecisionProblemModifierNoLink{\DecisionProblemFeas{2}} is in \ClassP{}.
\item
The decision problem \DecisionProblemAttain{2} is in \ClassDP{}.
\item
The decision problem \DecisionProblemModifierBoxed{\DecisionProblemAttain{2}} is in \ClassNP{}.
\item
The decision problem \DecisionProblemModifierNoLink{\DecisionProblemAttain{2}} is in \ClassCoNP{}.
\item
The decision problem \DecisionProblemModifierNoLinkBoxed{\DecisionProblemAttain{2}} is in \ClassP{}.
\item
The decision problem \DecisionProblemModifierNoLink{\DecisionProblemFeas{3}} is in \ClassCoNP{}.
\item
The decision problem \DecisionProblemModifierNoLinkBoxed{\DecisionProblemFeas{3}} is in \ClassP{}.
\item
The decision problem \DecisionProblemModifierNoLink{\DecisionProblemAttain{3}} is in \ClassPiP{2}.
\item
The decision problem \DecisionProblemModifierNoLinkBoxed{\DecisionProblemAttain{3}} is in \ClassP{}.
\item
The decision problem \DecisionProblemModifierNoLinkBoxed{\DecisionProblemFeas{4}} is in \ClassP{}.
\end{enumerate}
\end{theorem}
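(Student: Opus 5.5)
The plan is to reduce every item to Lemma~\ref{lemma:EquivalentConditionsForFeasibility}. By Proposition~\ref{prop:constraint-forwarding}, each no-link instance can be assumed to satisfy condition~\ref{Condition1}. By that lemma, feasibility of a \KLPInstances{k} instance becomes existence of an optimal solution of a $(k-1)$-level problem $T_k$. For that $(k-1)$-level problem, existence of an optimal solution will be characterized by feasibility together with non-unboundedness (for $k-1\le 2$, via closedness) or simply by feasibility (in the boxed case).

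\textbf{Bilevel items.}
\begin{enumerate}
\item[(i)] Problem $T_2$ is a single LP with the box bound $2^{\BLPPointSizeEstimate(\sigma)}$, whose encoding size is polynomial. Its optimal-solution existence is decided in polynomial time by LP algorithms.
\item[(ii)] For general bilevel LP, the feasible set is a finite union of polyhedra (as in the proof of Lemma~\ref{lemma:lipschitz-continuity-of-blp-feasible-set-function}), so the problem is an optimization of a linear objective over a finite union of polyhedra. An optimal solution therefore exists iff the instance is feasible and not unbounded. Feasibility is in \ClassNP{} by Theorem~\ref{theorem:klp-feasibility-membership}, and unboundedness is in \ClassNP{} by the earlier theorem on \DecisionProblemUnb{2}. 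The conjunction gives \ClassDP{}.
\item[(iii)] With boxed variables, unboundedness is impossible, so attainment equals feasibility, which is in \ClassNP{}.
\item[(iv)] Without linking constraints, feasibility is in \ClassP{} by (i), so attainment equals feasibility plus the complement of \DecisionProblemUnb{2}, which lies in \ClassCoNP{}.
\item[(v)] With no linking constraints and boxed variables, attainment equals feasibility, which is in \ClassP{} by (i).
\end{enumerate}

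\textbf{Trilevel and four-level items.}
\begin{enumerate}
\item[(vi)] Feasibility is equivalent to $\OptSolSet(T_3)\neq\emptyset$, where $T_3$ is a bilevel LP without linking constraints: the box on $x_1$ is a non-linking constraint and can be forwarded. By (iv), this is in \ClassCoNP{}. The box bound has polynomial encoding size because $\TLPPointSizeEstimate$ is a polynomial.
\item[(vii)] With condition~\ref{Condition2}, $T_3$ is boxed and no-link, so (v) applies.
\item[(x)] Lemma~\ref{lemma:EquivalentConditionsForFeasibility}(iii) reduces feasibility to attainment of $T_4$. $T_4$ is a trilevel boxed no-link LP, with $(x_1,x_2)$ merged as the first player's variable, so (ix) applies.
\item[(viii)] Attainment for a no-link trilevel instance means feasible, not unbounded, and infimum attained. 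By Corollary~\ref{cor:ClosednessOfTrilevelLPFeasibleSet} the feasible set is closed, but closed and bounded below does not by itself give attainment. Instead I would phrase attainment as a $\ClassPiP{2}$ predicate. Let $v$ be the optimal value, of polynomial size by Theorem~\ref{theorem:klp-feasibility-membership}. Then attainment says that the instance is feasible (coNP, by (vi)) and that for every candidate value $v'$ of polynomial size, if the instance with value at most $v'$ is feasible ($\ClassSigmaP{2}$), then it is feasible with value at most $v'$ attained. Attainment within a sublevel is a feasibility question for the trilevel LP with the additional non-linking-for-the-leader constraint $c_1^\top x\le v'$. This constraint sits in the leader's problem, so feasibility of the resulting problem is in \ClassSigmaP{2}.

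So I would rather use the following: attainment holds iff feasible, not unbounded (coNP, complement of \DecisionProblemUnb{3}, which is $\ClassPiP{2}$), and for all $v'$ of bounded size, $\OptValFunc\le v'$ implies feasibility with $c^\top x\le v'$. That last implication is $\neg\Sigma_2\vee\Sigma_2$, and I would need to rework it as a universal statement. A better route is to write attainment of the trilevel problem as attainment of $T$-style lower bounded problems using continuity: since $V$ of the inner bilevel is Lipschitz (Theorem~\ref{theorem:lipschitz-continuity-of-blp-value-function}), the leader minimizes $c_{11}^\top x_1+{}$(a lower semicontinuous function over a closed domain) after substituting the follower's optimistic best response. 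Attainment can fail only through unboundedness along a recession direction. I would then show that attainment is equivalent to feasibility plus non-unboundedness plus attainment by a polynomial-size point, verifiable in $\ClassPiP{2}$. This step is the main obstacle.
\item[(ix)] In the boxed case, the feasible set is compact by Corollary~\ref{cor:ClosednessOfTrilevelLPFeasibleSet}, so attainment equals feasibility, which is in \ClassP{} by (vii).
\end{enumerate}
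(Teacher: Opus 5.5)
Items (i)--(vii), (ix) and (x) follow the paper's proof. You drop one level with Lemma~\ref{lemma:EquivalentConditionsForFeasibility}, then use ``attainment $=$ feasibility and not unbounded'', or ``attainment $=$ feasibility'' in the boxed case.

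\textbf{The gap is item (viii), which you leave unproved.} You are right that closedness plus boundedness below does not force attainment for an arbitrary closed set; minimizing $x$ over $\{(x,y): x,y \ge 0,\ xy \ge 1\}$ is a counterexample. The missing idea is that the feasible set $F$ of a trilevel LP is not an arbitrary closed set.

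\begin{enumerate}
\item $F$ is defined by a first-order formula whose atoms are linear inequalities, since each follower's optimality quantifies over that follower's alternatives. By Fourier--Motzkin quantifier elimination, $F = \bigcup_j C_j$ for finitely many convex sets $C_j$, each cut out by finitely many strict and non-strict linear inequalities.
\item When $F$ is closed, as Corollary~\ref{cor:ClosednessOfTrilevelLPFeasibleSet} gives in the no-link case, $F = \bigcup_j \mathrm{cl}(C_j)$. The closure of a nonempty $C_j$ is the closed polyhedron obtained by relaxing its strict inequalities: if $x_0 \in C_j$ and $x$ satisfies the relaxed system, then $x + t(x_0 - x) \in C_j$ for all $t \in (0,1]$.
\item So $F$ is a finite union of closed polyhedra. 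A linear objective that is bounded below attains its infimum on each nonempty piece by LP theory. Hence an optimal solution exists if and only if the instance is feasible and not unbounded.
\end{enumerate}

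Item (viii) then follows exactly as in the paper. We have \DecisionProblemModifierNoLink{\DecisionProblemAttain{3}} $=$ \DecisionProblemModifierNoLink{\DecisionProblemFeas{3}} $\cap$ \DecisionProblemModifierCompNoLink{\DecisionProblemUnb{3}}. The first set lies in $\mathrm{coNP} \subseteq \Pi^p_2$ by (vi). The second lies in $\Pi^p_2$ because \DecisionProblemUnb{3} is in $\Sigma^p_2$. Finally, $\Pi^p_2$ is closed under intersection. The paper's one-line appeal to the Corollary relies on this polyhedral structure without saying so, just as the bilevel case relies on the union-of-polyhedra description of Basu et al.

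The alternatives you sketch would not reach $\Pi^p_2$ anyway. The threshold version places a universal quantifier over $v'$ in front of an implication between $\Sigma^p_2$ predicates, which only yields $\Pi^p_3$. Certifying ``attainment at a polynomial-size point'' requires three steps: guess the point, check its trilevel feasibility (a coNP test), and check its optimality against all feasible points. That is a $\Sigma^p_3$-type condition. Once attainment is reduced to feasibility plus boundedness, no such extra condition is needed.
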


\begin{proof}
\begin{enumerate}
\item
Invoke Lemma~\ref{lemma:EquivalentConditionsForFeasibility}.
\item
We have \DecisionProblemAttain{2} $=$ \DecisionProblemFeas{2} $\cap$ \DecisionProblemModifierComp{\DecisionProblemUnb{2}}, which is in $\ClassDP{}$.
\item
We have \DecisionProblemModifierBoxed{\DecisionProblemAttain{2}} $=$ \DecisionProblemModifierBoxed{\DecisionProblemFeas{2}}, which is in $\ClassNP{}$.
\item
We have \DecisionProblemModifierNoLink{\DecisionProblemAttain{2}} $=$ \DecisionProblemModifierNoLink{\DecisionProblemFeas{2}} $\cap$ \DecisionProblemModifierCompNoLink{\DecisionProblemUnb{2}}, which is in $\ClassCoNP{}$, for $\ClassCoNP{}$ is closed under intersection.
\item
As in assertion~(ii), we have \DecisionProblemModifierNoLinkBoxed{\DecisionProblemAttain{2}} $=$ \DecisionProblemModifierNoLinkBoxed{\DecisionProblemFeas{2}}, which is in \ClassP{}.
\item
By Lemma~\ref{lemma:EquivalentConditionsForFeasibility}~(ii), \DecisionProblemModifierNoLink{\DecisionProblemFeas{3}} $\le_l$ \DecisionProblemModifierNoLink{\DecisionProblemAttain{2}}.
Now use assertion~(iv).
\item
Similarly, use the fact that \DecisionProblemModifierNoLinkBoxed{\DecisionProblemFeas{3}} $\le_l$ \DecisionProblemModifierNoLinkBoxed{\DecisionProblemAttain{2}}.
\item
By Corollary~\ref{cor:ClosednessOfTrilevelLPFeasibleSet}, \DecisionProblemModifierNoLink{\DecisionProblemAttain{3}} $=$ \DecisionProblemModifierNoLink{\DecisionProblemFeas{3}} $\cap$ \DecisionProblemModifierCompNoLink{\DecisionProblemUnb{3}}, which is in $\ClassPiP{2}$, for $\ClassPiP{2}$ is closed under intersection.
\item
By Corollary~\ref{cor:ClosednessOfTrilevelLPFeasibleSet},
\DecisionProblemModifierNoLinkBoxed{\DecisionProblemAttain{3}} = \DecisionProblemModifierNoLinkBoxed{\DecisionProblemFeas{3}}, which is in \ClassP{}.
\item
Invoke (iii) in Lemma~\ref{lemma:EquivalentConditionsForFeasibility} and use assertion~(ix).
\MyQED
\end{enumerate}
\end{proof}

\section{Hardness Proofs}
\label{sec:hardness_results}

In this section, we discuss the hardness results shown in Table~\ref{tab:summary}.
Section~\ref{subsec:motivational_example} provides two examples demonstrating the complication of $k$-level LP for $k\ge3$: instances with finite but not attainable optimal objective values.
Section~\ref{subsec:BooleanSatisfiabilityProblemAndItsVariants} summarizes the Boolean satisfiability problem and its variants.
These are used on the hardness results presented in Sections~\ref{sec:HardnessProofs:SearchProblems}, \ref{sec:HardnessProofs:AttainabilityProblems}, and \ref{sec:HardnessProofs:FeasibilityProblems}, where we study \SearchProblemObj{k}, \DecisionProblemAttain{k}, and \DecisionProblemFeas{k}, respectively.

\subsection{Illustrative Examples}\label{subsec:motivational_example}

Below, we present two examples that have finite optimal objective values but no optimal solutions.
Note that for \KLPInstances{2}, an optimal solution always exists if the optimal objective value is finite since the feasible set is closed~\MyCite{BasuEtAl2021}.
By Corollary~\ref{cor:ClosednessOfTrilevelLPFeasibleSet}, the same holds for $\DecisionProblemModifierNoLink{\KLPInstances{3}}$.
However, the following examples show that this is not the case for $\DecisionProblemModifierNoLink{\KLPInstances{4}}$ and \KLPInstances{3}.

First, consider the following rational \DecisionProblemModifierBoxed{\KLPInstances{4}} instance:
\begin{align}
&
\inf_{x_1, x_2, x_3, x_4}
\left\{ x_3 - x_1 : 0 \le x_1 \le 1,
(x_2, x_3, x_4)
\in
\OptSolSet\eqref{eq:LabelExampleOneSecondPlayersProblem}
\right\},
\label{eq:LabelExampleOne}
\tag{\EqtagExampleOne{4}{1}{}}
\\
&
\inf_{x_2, x_3, x_4}
\left\{ -x_3 :
x_2 = 0,
0 \le x_2 \le 1,
(x_3, x_4)
\in
\OptSolSet\eqref{eq:LabelExampleOneThirdPlayersProblem}
\right\},
\label{eq:LabelExampleOneSecondPlayersProblem}
\tag{\EqtagExampleOne{4}{2}{(x_1)}}
\\
&
\inf_{x_3, x_4}
\left\{ x_4 :
0 \le x_3 \le 1,
x_4
\in
\OptSolSet\eqref{eq:LabelExampleOneFourthPlayersProblem}
\right\},
\label{eq:LabelExampleOneThirdPlayersProblem}
\tag{\EqtagExampleOne{4}{3}{(x_1, x_2)}}
\\
&
\inf_{x_4}
\left\{ -x_4 :
x_4 \le x_3,
x_4 \le 2 - x_1 - x_3,
0 \le x_4 \le 1
\right\}.
\label{eq:LabelExampleOneFourthPlayersProblem}
\tag{\EqtagExampleOne{4}{4}{(x_1, x_2, x_3)}}
\end{align}
In light of Proposition~\ref{prop:constraint-forwarding}, one can rewrite the instance as a \DecisionProblemModifierNoLinkBoxed{\KLPInstances{4}} instance.
Observe that $\OptSolSet\eqref{eq:LabelExampleOneFourthPlayersProblem} = \left\{ x_4 : x_4 = \min\{ x_3, 2 - x_1 - x_3 \} \right\}$ for any $x_1$, $x_2$, $x_3 \in [0, 1]$.
For any $x_1$, $x_2 \in [0, 1]$, the third player minimizes $\min\{ x_3, 2 - x_1 - x_3 \}$, which is concave in $x_3$.
Thus, the minimum is attained at $x_3 = 0$ and/or $x_3 = 1$.
Direct computation shows that
\begin{align*}
\OptSolSet\eqref{eq:LabelExampleOneThirdPlayersProblem}
&=
\begin{cases}
\{ (x_3, x_4) : x_3 = x_4 = 0 \}, & \text{if } 0 \le x_1 < 1, \\
\{ (x_3, x_4) : x_3 \in \{0, 1\}, x_4 = 0 \}, & \text{if } x_1 = 1.
\end{cases}
\end{align*}
The second player maximizes $x_3$, thus
\begin{align*}
\OptSolSet\eqref{eq:LabelExampleOneSecondPlayersProblem}
&=
\begin{cases}
\{ (x_2, x_3, x_4) : x_2 = x_3 = x_4 = 0 \}, & \text{if } 0 \le x_1 < 1, \\
\{ (x_2, x_3, x_4) : x_2 = 0, x_3 = 1, x_4 = 0 \}, & \text{if } x_1 = 1.
\end{cases}
\end{align*}
Note that for any $x_1 \in [0, 1]$, $\OptSolSet\eqref{eq:LabelExampleOneSecondPlayersProblem}$ is a singleton and its objective value is
$$
\begin{cases}
-x_1, & 0 \le x_1 < 1, \\
1 - x_1, & x_1 = 1.
\end{cases}
$$
Thus, we have $\OptValFunc\eqref{eq:LabelExampleOne} = -1$, but this value is not attained. 
This shows that a rational \DecisionProblemModifierNoLinkBoxed{\KLPInstances{4}} instance may have a finite, but non-attainable optimal objective value.
While \MyCitet{Jeroslow}{Jeroslow1985} provides a similar example, our instance exhibits simpler behavior of the second player, making the subsequent analysis more convenient.

Next, consider the following \DecisionProblemModifierBoxed{\KLPInstances{3}} instance:
\begin{align}
\inf_{x_1, x_2, x_3}
\left\{x_2 - x_1
:
0 \le x_1 \le 1,
(x_2, x_3) \in \OptSolSet\EqrefExampleTwo{3}{2}{(x_1)}
\right\},
\label{LabelExampleTwo}
\tag{\EqtagExampleTwo{3}{1}{}}
\end{align}
\begin{align}
\inf_{x_2, x_3} \left\{
-x_2
:
x_2 \le x_1, x_3 = 0, 0 \le x_2 \le 1,
x_3 \in \OptSolSet\EqrefExampleTwo{3}{3}{(x_1, x_2)}
\right\},
\label{LabelExampleTwoSecond}
\tag{\EqtagExampleTwo{3}{2}{(x_1)}}
\end{align}
\begin{align}
\inf_{x_3}
\left\{
-x_3 :
x_3 \le x_2, x_3 \le 1 - x_2, 0 \le x_3 \le 1
\right\}.
\label{LabelExampleTwoThird}
\tag{\EqtagExampleTwo{3}{3}{(x_1, x_2)}}
\end{align}

At the optimality of the third player, we have $x_3 = \min\{x_2, 1-x_2\}$.
Thus, the second player's constraint $x_3 = 0$ enforces $x_2 = 0$ or $1$.
Since the second player maximizes $x_2$, we have
\begin{align*}
\OptSolSet\eqref{LabelExampleTwoSecond}
&=
\begin{cases}
\{ (x_2, x_3) : x_2 = x_3 = 0 \}, & \text{if } 0 \le x_1 < 1, \\
\{ (x_2, x_3) : x_2 = 1, x_3 = 0 \}, & \text{if } x_1 = 1.
\end{cases}
\end{align*}
Thus, following the same argument as in \EqrefExampleOne{4}{1}{}, we obtain $\OptValFunc\EqrefExampleTwo{3}{1}{} = -1$ but there is no optimal solution.

\subsection{Boolean Satisfiability Problem and Its Variants}
\label{subsec:BooleanSatisfiabilityProblemAndItsVariants}

In our hardness proofs, we use the Boolean satisfiability problem and several of its variants.
In this section, we summarize these problems.

Arguably, the most well-known problem of this type is the satisfiability problem \DecisionProblemThreeSAT{} for Boolean formulae.
Let $\PXSATBFSet$ be the set of Boolean formulae.
The input of \DecisionProblemThreeSAT{} is a Boolean formula $\PXSATBooleanFormula{} \in \PXSATBFSet{}$ and it is a ``yes'' instance if and only if $\PXSATBooleanFormula$ is satisfiable.
In this paper, we identify truth assignments to the Boolean variables with binary vectors, and we say that a binary vector $\PXSATTruthAssignment$ satisfies $\PXSATBooleanFormula$ if and only if the corresponding truth assignment satisfies $\PXSATBooleanFormula$.
For any $\PXSATBooleanFormula{} \in \PXSATBFSet{}$ that is satisfiable, let $\PXSATLexMaxSolution(\PXSATBooleanFormula)$ be the lexicographically maximum binary vector satisfying $\PXSATBooleanFormula$.
This quantity is undefined if $\PXSATBooleanFormula$ is not satisfiable.
The search problem \SearchProblemThreeSAT{} takes $\PXSATBooleanFormula$ as input and requires outputting $\PXSATLexMaxSolution(\PXSATBooleanFormula)$ if $\PXSATBooleanFormula$ is satisfiable, or ``no'' otherwise.
The corresponding decision problem is \DecisionProblemLastBitSAT{}: the input is $\PXSATBooleanFormula$, and it is a ``yes'' instance if and only if $\PXSATBooleanFormula$ is satisfiable and the last (least significant) component of $\PXSATLexMaxSolution(\PXSATBooleanFormula)$ is $1$.
It is known that \DecisionProblemThreeSAT{} is \ClassNP{}-complete, \SearchProblemThreeSAT{} is \ClassFDeltaP{2}-complete, and \DecisionProblemLastBitSAT{} is \ClassDeltaP{2}-complete~\MyCite{krentel1986complexity}.

The above problems can be extended by considering alternating quantifiers.
For $k \in \PositiveIntegers$, let $\PXQSATQBFSet{k}$ denote the set of closed quantified Boolean formulae (QBF) $\PXQSATQBF$ in prenex normal form of the form
\begin{equation}
\exists \PXQSATBoolVariable{}_{1} \forall \PXQSATBoolVariable{}_{2} \exists \PXQSATBoolVariable{}_{3} \ldots Q \PXQSATBoolVariable{}_{k}
\ [\PXQSATBooleanFormula(\PXQSATBoolVariable{}_{1}, \ldots, \PXQSATBoolVariable{}_{k}) = 1],
\label{eq:QBFKOdd}
\end{equation}
where the sets of Boolean variables $\PXQSATBoolVariable{}_l$, for $l = 1, \ldots, k$, are pairwise disjoint, $\PXQSATBooleanFormula$ is a quantifier-free Boolean formula and $Q$ is $\exists$ if $k$ is odd and $\forall$ otherwise.
For any $\PXQSATQBF \in \PXQSATQBFSet{k}$ defined as \eqref{eq:QBFKOdd}, we write $\PXQSATQBF(\PXQSATTruthAssignment{}_1)$ to denote the QBF obtained by removing the quantifier $\exists \PXQSATBoolVariable{}_1$ and substituting the binary vector $\PXQSATTruthAssignment{}_1$ for $\PXQSATBoolVariable{}_1$:
$$
\forall \PXQSATBoolVariable{}_{2} \exists \PXQSATBoolVariable{}_{3} \ldots Q \PXQSATBoolVariable{}_{k}
\ [\PXQSATBooleanFormula(\PXQSATTruthAssignment{}_{1}, \PXQSATBoolVariable{}_{2}, \ldots, \PXQSATBoolVariable{}_{k}) = 1].
$$
For any $\PXQSATQBF \in \PXQSATQBFSet{k}$ that \PXQSATIsTrue{}, we define $\PXQSATQBFLexMaxSolution(\PXQSATQBF)$ to be the lexicographically maximum binary vector $\PXQSATTruthAssignment{}_1$ such that $\PXQSATQBF(\PXQSATTruthAssignment{}_1)$ \PXQSATIsTrue{}.

For $k \in \PositiveIntegers$, we define
$\DecisionProblemQThreeSAT{k} = \{ H \in \PXQSATQBFSet{k} : \text{$H$ \PXQSATIsTrue{}}\}$, $\SearchProblemQThreeSAT{k}$ as the search problem whose input is $H \in \PXQSATQBFSet{k}$ and requires $\PXQSATQBFLexMaxSolution(\PXQSATQBF)$, or ``no'' if $\PXQSATQBF$ \PXQSATIsNotTrue{}, and $\DecisionProblemLastBitQSAT{k} = \{ \PXQSATQBF \in \PXQSATQBFSet{k} : \text{$\PXQSATQBF$ \PXQSATIsTrue{} and the last component of $\PXQSATQBFLexMaxSolution(\PXQSATQBF)$ is $1$} \}$.
We have that \DecisionProblemQThreeSAT{k} is $\Sigma^p_k$-complete~\MyCite{wrathall1976complete} and $\SearchProblemQThreeSAT{k}$ is $\ClassFDeltaP{k}$-complete~\MyCite{krentel1992generalizations}.
Furthermore, \MyCitet{Krentel}{krentel1992generalizations} shows that the decision of any bit of $\PXQSATQBFLexMaxSolution(\PXQSATQBF)$, the language $\{ (\PXQSATQBF, l) \in \PXQSATQBFSet{k} \times \PositiveIntegers{} : \text{$\PXQSATQBF$ \PXQSATIsTrue{} and the $l$-th component of $\PXQSATQBFLexMaxSolution(\PXQSATQBF)$ is $1$} \}$, is \ClassDeltaP{k}-complete.
It follows that the decision of the least significant bit, the language \DecisionProblemLastBitQSAT{k}, is \ClassDeltaP{k}-complete.

\subsection{Search Problems}\label{sec:HardnessProofs:SearchProblems}

In this section, we study the hardness of $\SearchProblemObj{k}$ for $k \ge 2$.


\subsubsection{Case $k = 2$}
\label{sec:HardnessProofs:SearchProblems:KTwo}

We begin with the following result for bilevel LP.

\begin{theorem}
\label{theorem:search-obj-hardness-bilevel}
The search problem \SearchProblemObj{2} is $\ClassFDeltaP{2}$-hard.
Moreover, this remains valid even when the input is restricted to satisfy conditions~\ref{Condition1}--\ref{Condition3}.
\end{theorem}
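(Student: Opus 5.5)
We reduce from \SearchProblemThreeSAT{}, which is \ClassFDeltaP{2}-complete~\MyCite{krentel1986complexity}, via a metric reduction $(f, g)$ computable in logarithmic space. Here $f$ maps a formula $\PXSATBooleanFormula$ with $\PXSATNVariables$ variables to a \DecisionProblemModifierNoLinkBoxedPoly{\KLPInstances{2}} instance, and $g$ recovers $\PXSATLexMaxSolution(\PXSATBooleanFormula)$ (or ``no'') from the optimal value. Without loss of generality, $\PXSATBooleanFormula$ is in CNF. The leader chooses a truth assignment relaxed to $\PXSATTruthAssignment \in [0,1]^{\PXSATNVariables}$, an escape variable $z \in [0,1]$, and a variable $\PXSATObjVariable \in [0,1]$ that should equal $\sum_i 2^{-i}\PXSATTruthAssignment_i$ when $\PXSATTruthAssignment$ is binary.

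\textbf{Encoding the binary value with small coefficients.} Condition~\ref{Condition3} forbids weights $2^{-i}$. We therefore let the follower compute the value through a chain of auxiliary variables $\PXSATAuxVarOne_{\PXSATNVariables}, \dots, \PXSATAuxVarOne_1$ subject to the equalities $2\PXSATAuxVarOne_i = \PXSATAuxVarOne_{i+1} + \PXSATTruthAssignment_i$, with $\PXSATAuxVarOne_{\PXSATNVariables+1} = 0$. All coefficients are in $\{-2,\dots,2\}$, and $\PXSATAuxVarOne_1 = \sum_i 2^{-i}\PXSATTruthAssignment_i$ is forced, so $\PXSATObjVariable$ can be taken to be $\PXSATAuxVarOne_1$. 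Because these are equalities with a unique solution, the follower has no freedom in them, and there are no linking constraints. By Proposition~\ref{prop:constraint-forwarding}, every constraint can be placed in the follower's problem.

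\textbf{Penalties computed by the follower.} The follower's objective is separable, so it computes several quantities independently.
\begin{enumerate}
\item Integrality gaps $\PXSATAuxVarTwo_i = \min\{\PXSATTruthAssignment_i, 1 - \PXSATTruthAssignment_i\}$, obtained by maximizing $\PXSATAuxVarTwo_i$ subject to $\PXSATAuxVarTwo_i \le \PXSATTruthAssignment_i$ and $\PXSATAuxVarTwo_i \le 1 - \PXSATTruthAssignment_i$.
\item Clause violations $w_j = \max\{0, 1 - z - \sum_{\ell \in C_j} \ell(\PXSATTruthAssignment)\}$, obtained by minimizing $w_j$ subject to $w_j \ge 0$ and $w_j \ge 1 - z - \sum_{\ell \in C_j}\ell(\PXSATTruthAssignment)$.
\end{enumerate}
These follower-computed penalties make the leader's problem nonconvex (integrality) while keeping all data small. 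The leader minimizes
\[
\PXSATObjective + \PXSATCostCoefTwo^\top w - \tfrac{1}{2}\PXSATObjVariable\, \text{-style terms} \quad\text{with the escape priced in,}
\]
concretely $1 - \PXSATObjVariable + 2\,\OneVector^\top \PXSATAuxVarTwo + 2\,\OneVector^\top w$, with $z$ tied to the escape through the constraint $\PXSATObjVariable \le 1 - z$.

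\textbf{Intended behaviour.} At a satisfying binary $\PXSATTruthAssignment$ with $z = 0$, the objective is $1 - \sum_i 2^{-i}\PXSATTruthAssignment_i$, which lies in $(0,1]$. Taking $z = 1$ forces $\PXSATObjVariable = 0$ and gives value $1$, so $\OptValFunc = 1$ exactly in the unsatisfiable case. The minimum over satisfying assignments is attained at $\PXSATLexMaxSolution(\PXSATBooleanFormula)$, because the map $\PXSATTruthAssignment \mapsto \sum_i 2^{-i}\PXSATTruthAssignment_i$ on binary vectors is strictly monotone in lexicographic order. One must make sure the all-ones assignment does not collide with the escape value; to do so, add a dummy always-true leading variable, or shift the value by $2^{-\PXSATNVariables-1}$. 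The map $g$ then outputs ``no'' if the value is $1$, and otherwise reads off the bits of $2^{\PXSATNVariables}(1 - \OptValFunc)$. This is clearly logspace, and by Theorem~\ref{theorem:klp-feasibility-membership} the value has polynomial encoding size.

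\textbf{Main obstacle: exactness of the penalties.} The hard step is proving that no fractional leader choice $(\PXSATTruthAssignment, z)$ beats the best binary one, even though the penalty weights are only polynomial. The plan is to combine a Lipschitz estimate with a piecewise-linearity argument.
\begin{enumerate}
\item \emph{Rounding.} Rounding $\PXSATTruthAssignment$ to the nearest binary vector changes $\PXSATObjVariable$ by at most $\sum_i 2^{-i}\PXSATAuxVarTwo_i \le \OneVector^\top \PXSATAuxVarTwo$. Each clause sum changes by at most the sum of the gaps of its literals, so each $w_j$ rises by at most that amount.
\item \emph{Vertex reduction.} The objective is concave and piecewise linear in $(\PXSATTruthAssignment, z)$ on each cell where the clause maxima are fixed. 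Hence it suffices to compare vertices, and on fractional vertices the integrality penalty dominates.
\end{enumerate}
If the naive weight $2$ fails because of clause interactions, I will raise the clause and gap weights to a polynomial such as $\PXSATNVariables + m$ (still within~\ref{Condition3} after padding $n$). I will then check that binary points satisfying $\PXSATBooleanFormula$ or using $z = 1$ dominate every other leader choice.
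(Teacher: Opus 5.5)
Your instance is genuinely different from the paper's. The paper takes Jeroslow's instance $\mathrm{CLP}_2^1(F)$ together with its already-established rounding property (Lemma~\ref{lemma:SolutionOfSATBLP}(iv)). It then only appends $-\sum_i 2^{-i}g_i$, where $g_i=\max\{0,2s_i-1\}$ is computed by the follower and the weights are realized by a halving chain. Your self-built instance can be made to work. However, the step that carries the entire reduction is only announced, and the tools you list do not establish it. That step is: no feasible $(s,z)$ has objective below $1-\mathrm{val}(M(F))$ (respectively below $1$ when $F$ is unsatisfiable), where $\mathrm{val}(s)=\sum_i 2^{-i}s_i$. The problems with your plan are these:
\begin{itemize}
\item Your rounding estimate charges every clause in which a rounded variable occurs. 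It therefore bounds the increase of $\OneVector^\top w$ only by the maximum number of occurrences of a variable times $\OneVector^\top v$, which a gap weight of $2$ cannot absorb.
\item Your fallback of raising \emph{both} weights to $n+m$ leaves that ratio unchanged, so it does not help.
\item Rounding $s$ can increase $t=\mathrm{val}(s)$ and violate $t\le 1-z$, so $z$ cannot simply be kept fixed.
\item The claim that the integrality penalty dominates at fractional vertices of the concavity cells restates what must be proved; it is not an argument.
\end{itemize}

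What closes the gap is a one-clause comparison with the target value, rather than with the rounded point, and weight $2$ suffices. Given a feasible $(s,z)$, let $b$ be a nearest binary vector and $d=\OneVector^\top v=\|s-b\|_1$, so that $\mathrm{val}(s)\le\mathrm{val}(b)+d/2$. There are two cases.
\begin{itemize}
\item If $b$ satisfies $F$, the objective is at least $1-\mathrm{val}(b)+\tfrac32 d\ge 1-\mathrm{val}(M(F))$.
\item If $b$ violates some clause $C_j$, each literal of $C_j$ takes the value $|s_i-b_i|$ at $s$. So the clause sum is at most $d$ (for clauses with distinct variables). Together with $z\le 1-\mathrm{val}(s)$, this yields $w_j\ge\max\{0,\mathrm{val}(s)-d\}$, hence the objective is at least
\[ 1-\mathrm{val}(s)+2d+2\max\{0,\mathrm{val}(s)-d\}\ \ge\ 1. \]
\end{itemize}
Since $(\ZeroVector,1)$ attains $1$ and $(M(F),0)$ attains $1-\mathrm{val}(M(F))$, the optimal value is exactly what your decoder expects. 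No vertex enumeration or weight tuning is needed.

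One small correction: the assignment whose value collides with the escape value $1$ is the all-zeros one, not the all-ones one. Your dummy always-true leading variable does handle this.
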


We will show this by reducing \SearchProblemThreeSAT{} to \DecisionProblemModifierNoLinkBoxedPoly{\SearchProblemObj{2}}.
To this end, we prepare some auxiliary results.

Given $\PXSATBooleanFormula \in \PXSATBFSet$, \MyCitet{Jeroslow}{Jeroslow1985} constructed a rational bilevel LP instance
\begin{align}
\tag{\EqtagSATBLP{2}{1}{(\PXSATBooleanFormula)}}
\label{eq:SATBLP}
\min_{\PXSATTruthAssignment, \PXSATObjVariable, \PXSATAuxVarOne, \PXSATAuxVarTwo}
\{
\PXSATObjective{}
:
\PXSATAuxVarTwo
\in \mathcal{S}\eqref{eq:SATBLPSecond})
\}
\end{align}
and
\begin{align}
\tag{\EqtagSATBLP{2}{2}{(\PXSATTruthAssignment, \PXSATObjVariable, \PXSATAuxVarOne, \PXSATBooleanFormula)}}
\label{eq:SATBLPSecond}
\min_{\PXSATAuxVarTwo}
\{
(\PXSATCostCoefTwo^{\PXSATBooleanFormula})^{\top} \PXSATAuxVarTwo
:
\PXSATQBFConstraintCoef^{\PXSATBooleanFormula}_{1} \PXSATTruthAssignment
+ \PXSATQBFConstraintCoef^{\PXSATBooleanFormula}_{2} \PXSATObjVariable
+ \PXSATQBFConstraintCoef^{\PXSATBooleanFormula}_{3} \PXSATAuxVarOne
+ \PXSATQBFConstraintCoef^{\PXSATBooleanFormula}_{4} \PXSATAuxVarTwo \ge \PXSATQBFConstraintRHS^{\PXSATBooleanFormula}
\},
\end{align}
where $\OptValFunc\EqrefSATBLP{2}{1}{(\PXSATBooleanFormula)} = \PXSATYesObjVal{}$ if $\PXSATBooleanFormula$ is satisfiable and $\OptValFunc\EqrefSATBLP{2}{1}{(\PXSATBooleanFormula)} = \PXSATNoObjVal{}$ otherwise.
The vector of variables $\PXSATTruthAssignment$ represents a truth assignment to the Boolean variables in $\PXSATBooleanFormula$.
The variable $\PXSATObjVariable$ represents the truth value of $\PXSATBooleanFormula$, whereas $\PXSATAuxVarOne$ is an auxiliary variable that encodes the truth values of the subformulae of $\PXSATBooleanFormula$.
For a fixed binary vector $\PXSATTruthAssignment$, the feasible values of $\PXSATAuxVarOne$ and $\PXSATObjVariable$ are uniquely determined.
The variables $\PXSATAuxVarTwo$ are auxiliary variables introduced to discourage the first player from choosing fractional values.
For any $(\PXSATTruthAssignment, \PXSATObjVariable, \PXSATAuxVarOne)$, there is a unique value of $\PXSATAuxVarTwo$ that is optimal to the second player.
The instance $\EqrefSATBLP{2}{1}{(\PXSATBooleanFormula)}$ satisfies conditions~\ref{Condition1} and \ref{Condition2}.
In particular, the second player's constraints include variable bounds, for example, $\ZeroVector \le \PXSATTruthAssignment \le \OneVector$.
The constructed instance has integer cost coefficients of polynomial magnitude. 
Consequently, by introducing dummy variables, one can assume that \eqref{eq:SATBLP} also satisfies condition~\ref{Condition3}.

Below, we summarize the properties of
$\EqrefSATBLP{2}{1}{(\PXSATBooleanFormula)}$ that will be used later.

\begin{lemma}
\label{lemma:SolutionOfSATBLP}
\mbox{}
\begin{enumerate}
\item
For each binary vector $\PXSATTruthAssignment$, there exists unique vectors $\PXSATObjVariable$, $\PXSATAuxVarOne$ and $\PXSATAuxVarTwo$ such that $(\PXSATTruthAssignment, \PXSATObjVariable, \PXSATAuxVarOne, \PXSATAuxVarTwo) \in \FeasSolSet\EqrefSATBLP{2}{1}{(\PXSATBooleanFormula)}$.
Furthermore, the vector $\PXSATAuxVarOne$ is binary, $\PXSATObjVariable = 1$ if $\PXSATTruthAssignment$ satisfies $\PXSATBooleanFormula$ and $\PXSATObjVariable = 0$ otherwise, and $\PXSATAuxVarTwo = \ZeroVector$.
\item
If $\PXSATBooleanFormula$ is satisfiable, then $\OptValFunc\EqrefSATBLP{2}{1}{(\PXSATBooleanFormula)} = \PXSATYesObjVal{}$, and $(\PXSATTruthAssignment, \PXSATObjVariable, \PXSATAuxVarOne, \PXSATAuxVarTwo) \in \OptSolSet\EqrefSATBLP{2}{1}{(\PXSATBooleanFormula)}$ if and only if $\PXSATTruthAssignment$ is a binary vector satisfying $\PXSATBooleanFormula$.

\item
If $\PXSATBooleanFormula$ is not satisfiable, then $\OptValFunc\EqrefSATBLP{2}{1}{(\PXSATBooleanFormula)} = \PXSATNoObjVal{}$, and $(\PXSATTruthAssignment, \PXSATObjVariable, \PXSATAuxVarOne, \PXSATAuxVarTwo) \in \OptSolSet\EqrefSATBLP{2}{1}{(\PXSATBooleanFormula)}$ if and only if $\PXSATTruthAssignment$ is a binary vector.

\item
Let $(\PXSATTruthAssignment, \PXSATObjVariable, \PXSATAuxVarOne, \PXSATAuxVarTwo) \in \FeasSolSet\EqrefSATBLP{2}{1}{(\PXSATBooleanFormula)}$.
If $\PXSATObjective{} < 2.5$, then there exists a unique binary vector $(\bar{\PXSATTruthAssignment}, \bar{\PXSATObjVariable}, \bar{\PXSATAuxVarOne}, \bar{\PXSATAuxVarTwo})$ that is closest to $(\PXSATTruthAssignment, \PXSATObjVariable, \PXSATAuxVarOne, \PXSATAuxVarTwo)$, and satisfies $(\bar{\PXSATTruthAssignment}, \bar{\PXSATObjVariable}, \bar{\PXSATAuxVarOne}, \bar{\PXSATAuxVarTwo}) \in \FeasSolSet\EqrefSATBLP{2}{1}{(\PXSATBooleanFormula)}$, as well as $1 - \bar{\PXSATObjVariable} + (\PXSATCostCoefOne^{\PXSATBooleanFormula})^{\top} \bar{\PXSATAuxVarTwo} \le \PXSATObjective{}$.
\end{enumerate}
\end{lemma}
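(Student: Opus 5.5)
The plan is to open up Jeroslow's construction $\EqrefSATBLP{2}{1}{(\PXSATBooleanFormula)}$ and verify each assertion directly from its constraints. Assertions~(ii) and~(iii) will come out as corollaries of~(i) together with a ``penalty dominates gain'' inequality. That inequality is also the core of~(iv).

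I take the construction to have the following shape; the first step of the proof is to confirm this against Jeroslow's instance. The constraints $\PXSATQBFConstraintCoef^{\PXSATBooleanFormula}_{1} \PXSATTruthAssignment + \cdots \ge \PXSATQBFConstraintRHS^{\PXSATBooleanFormula}$ contain the box constraints on all variables. They also contain the standard linearization of the parse tree of $\PXSATBooleanFormula$: for each subformula there is a component of $\PXSATAuxVarOne$, with constraints of the type $u_{\neg a} = 1 - u_a$, $u_{a\wedge b} \le u_a$, $u_{a\wedge b} \le u_b$, $u_{a\wedge b} \ge u_a + u_b - 1$ (and dually for $\vee$), and finally $\PXSATObjVariable$ tied to the root. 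The follower's variables $\PXSATAuxVarTwo$ satisfy $\PXSATAuxVarTwo_i \le \PXSATTruthAssignment_i$ and $\PXSATAuxVarTwo_i \le 1 - \PXSATTruthAssignment_i$, and the follower's cost $\PXSATCostCoefTwo^{\PXSATBooleanFormula}$ pushes $\PXSATAuxVarTwo$ up. Hence the unique follower optimum is $\PXSATAuxVarTwo_i = \min\{\PXSATTruthAssignment_i, 1-\PXSATTruthAssignment_i\}$. The leader pays the penalty $(\PXSATCostCoefOne^{\PXSATBooleanFormula})^\top \PXSATAuxVarTwo$ with a large uniform coefficient $C$.

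For assertion~(i), fix a binary $\PXSATTruthAssignment$. Induction on the parse tree shows that the linearized gate constraints force each component of $\PXSATAuxVarOne$ to equal the truth value of its subformula, so it is binary. The root constraint then pins $\PXSATObjVariable$ to the value of $\PXSATBooleanFormula(\PXSATTruthAssignment)$. The follower's unique optimum is $\PXSATAuxVarTwo = \min\{\PXSATTruthAssignment, \OneVector - \PXSATTruthAssignment\} = \ZeroVector$.

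The central estimate is a Lipschitz bound, which I expect to be the main obstacle. Let $(\PXSATTruthAssignment, \PXSATObjVariable, \PXSATAuxVarOne, \PXSATAuxVarTwo)$ be any feasible point, and let $\delta_i = \min\{\PXSATTruthAssignment_i, 1-\PXSATTruthAssignment_i\} = \PXSATAuxVarTwo_i$. The claim is that, whenever every $\delta_i < 1/2$, the rounded assignment $\bar{\PXSATTruthAssignment}$ satisfies
\begin{equation*}
\PXSATObjVariable \le \bar{\PXSATObjVariable} + L \textstyle\sum_i \delta_i .
\end{equation*}
Here $\bar{\PXSATObjVariable}$ is the unique value given by~(i), and $L$ is at most the size of the formula, since each gate propagates an error at most additively. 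I would prove this by induction over the parse tree. The induction hypothesis is that every feasible $u_a$ lies within $L_a \sum_i \delta_i$ of the true value of subformula $a$ under $\bar{\PXSATTruthAssignment}$, where $L_a$ is at most the size of subformula $a$. This works for $\neg$, $\wedge$ and $\vee$ because the linear envelopes are $1$-Lipschitz in each argument.

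The penalty weight $C$ in $\PXSATCostCoefOne^{\PXSATBooleanFormula}$ is chosen to exceed $L$. With that choice,
\begin{equation*}
\PXSATObjective{} \ \ge\ 1 - \bar{\PXSATObjVariable} + (C - L)\textstyle\sum_i \delta_i \ \ge\ 1 - \bar{\PXSATObjVariable}.
\end{equation*}
Equality holds only when $\PXSATTruthAssignment$ is binary. If some $\delta_i = 1/2$, the objective is at least $C/2 - 0 \ge 2.5$ once $C \ge 5$; this is polynomial in magnitude, so it is consistent with condition~\ref{Condition3}.

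Assertions~(ii) and~(iii) follow at once. Every feasible value is at least $\min_{\bar{\PXSATTruthAssignment}} (1-\bar{\PXSATObjVariable})$, with equality exactly at binary $\PXSATTruthAssignment$ achieving that minimum. So the optimal value is $0$, attained exactly at satisfying assignments, if $\PXSATBooleanFormula$ is satisfiable. Otherwise it is $1$, attained at every binary $\PXSATTruthAssignment$.

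For assertion~(iv), an objective value below $2.5$ excludes any $\delta_i = 1/2$ by the remark above. Hence rounding $\PXSATTruthAssignment$ is well defined and unique. Let $(\bar{\PXSATTruthAssignment}, \bar{\PXSATObjVariable}, \bar{\PXSATAuxVarOne}, \bar{\PXSATAuxVarTwo})$ be the binary feasible point given by~(i), so $\bar{\PXSATAuxVarTwo} = \ZeroVector$. It has objective $1 - \bar{\PXSATObjVariable} \le \PXSATObjective{}$ by the displayed inequality.

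Closeness also has to hold in the $\PXSATAuxVarOne$, $\PXSATObjVariable$ and $\PXSATAuxVarTwo$ coordinates. This uses the same inductive error bound: it gives $|u_a - \bar u_a| < 1/2$ for the relevant coordinates, so $\bar{\PXSATAuxVarOne}$ and $\bar{\PXSATObjVariable}$ are the coordinatewise roundings. I expect this to require the objective bound $2.5$ to control $\sum_i \delta_i$ tightly enough, roughly $\sum_i \delta_i < 2.5/(C-L)$. The constants in Jeroslow's $\PXSATCostCoefOne^{\PXSATBooleanFormula}$ will be checked to make this strict.
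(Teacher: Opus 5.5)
Your argument is correct for the instance you actually write down, but it takes a different route from the paper. The paper gives no argument of its own: it defers to Lemma~4.4 of Jeroslow for his specific data $c^F, d^F, E^F, e^F$, which are never spelled out here. You instead fix a concrete instance (parse-tree linearization of $F$ with box bounds, follower response $v_i=\min\{s_i,1-s_i\}$, uniform penalty weight $C$ on $v$). From it you derive (ii)--(iv) via one global estimate: errors propagate additively through $\neg,\wedge,\vee$, so $|u_a-\bar u_a|\le L\sum_i\delta_i$ for every subformula $a$ (including the root, hence $t$), and a weight $C>L$ outweighs any gain in $t$.

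The two hedges in your write-up should be resolved rather than deferred. From this paper you cannot check that Jeroslow's instance has your shape, and your estimate covers his instance only if his weight on $v$ is at least about $6L$. The clean fix is to make your construction the definition and fix the constant, e.g.\ $C=6L$ with $L$ the maximal number of occurrences of a variable in $F$. Then an objective value below $2.5$ rules out $\delta_i=1/2$, since that would cost $C/2\ge 3$. It also gives $(C-L)\sum_i\delta_i<2.5$, hence $|u_a-\bar u_a|\le L\sum_i\delta_i<2.5L/(C-L)\le 1/2$. So coordinatewise rounding is exactly the feasible completion of $\bar s$ from~(i). Since $C$ is polynomial in the size of $F$, condition~\ref{Condition3} is still reachable with dummy variables.

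Your route buys a self-contained proof with explicit constants. The citation buys that the lemma concerns the very instance Jeroslow lifts to quantified formulas, which the paper reuses for $\mathrm{ELP}$ and Lemma~\ref{lemma:ValueOfQLP}. If you substitute your instance, the same scaling of the penalty weight must be carried into that multilevel construction.
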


\begin{proof}
See Lemma~4.4 of \MyCitet{Jeroslow}{Jeroslow1985}.
\end{proof}

Now, consider the following instance
\begin{align}
\min_{\PXSATTruthAssignment, \PXSATObjVariable, \PXSATAuxVarOne, \PXSATAuxVarTwo, \PXSATOneIndicator}
\ &
\PXSATObjective{}
- \sum_{i = 1}^{\PXSATNVariables} \frac{1}{2^i} \PXSATOneIndicator{}_i
\tag{\EqtagLEXSATBLP{2}{1}{(\PXSATBooleanFormula)}}
\label{eq:LEXSATBLP}
\\
\text{s.t.}
\ &
(\PXSATAuxVarTwo, \PXSATOneIndicator)
\in \mathcal{S}\eqref{eq:LEXSATBLPSecond})
\notag
\end{align}
and
\begin{align}
\min_{\PXSATAuxVarTwo, \PXSATOneIndicator}
\ &
(\PXSATCostCoefTwo^{\PXSATBooleanFormula})^{\top} \PXSATAuxVarTwo
+ \OneVector^{\top} \PXSATOneIndicator
\tag{\EqtagLEXSATBLP{2}{2}{(\PXSATTruthAssignment, \PXSATObjVariable, \PXSATAuxVarOne, \PXSATBooleanFormula)}}
\label{eq:LEXSATBLPSecond}
\\
\text{s.t.}
\ &
\displaystyle
\PXSATQBFConstraintCoef^{\PXSATBooleanFormula}_{1} \PXSATTruthAssignment
+ \PXSATQBFConstraintCoef^{\PXSATBooleanFormula}_{2} \PXSATObjVariable
+ \PXSATQBFConstraintCoef^{\PXSATBooleanFormula}_{3} \PXSATAuxVarOne
+ \PXSATQBFConstraintCoef^{\PXSATBooleanFormula}_{4} \PXSATAuxVarTwo \ge \PXSATQBFConstraintRHS^{\PXSATBooleanFormula},
\notag
\\
&
2 \PXSATTruthAssignment - \OneVector \le \PXSATOneIndicator,
\notag
\\
&
\ZeroVector \le \PXSATOneIndicator \le \OneVector,
\notag
\end{align}
where $\PXSATNVariables$ is the number of Boolean variables in $\PXSATBooleanFormula$.
We have $\OptValFunc\EqrefLEXSATBLP{2}{1}{(\PXSATBooleanFormula)} \le \OptValFunc\EqrefSATBLP{2}{1}{(\PXSATBooleanFormula)}$.
Below, we show that the optimal solution of $\EqrefLEXSATBLP{2}{1}{(\PXSATBooleanFormula)}$ is the lexicographically maximum binary vector satisfying $\PXSATBooleanFormula$.

\begin{lemma}
\label{lemma:SolutionIsLexicographicallyMaximum}
\begin{enumerate}
\item
Suppose $\PXSATBooleanFormula$ is satisfiable.
Then,
$$
\OptSolSet\EqrefLEXSATBLP{2}{1}{(\PXSATBooleanFormula)} = \{ (\PXSATTruthAssignment, \PXSATObjVariable, \PXSATAuxVarOne, \PXSATAuxVarTwo, \PXSATOneIndicator) : \text{ $\PXSATTruthAssignment = \PXSATOneIndicator = \PXSATLexMaxSolution(\PXSATBooleanFormula)$, $(\PXSATTruthAssignment, \PXSATObjVariable, \PXSATAuxVarOne, \PXSATAuxVarTwo) \in \FeasSolSet\EqrefSATBLP{2}{1}{(\PXSATBooleanFormula)}$} \}.
$$
and
\begin{equation*}
\OptValFunc\EqrefLEXSATBLP{2}{1}{(\PXSATBooleanFormula)} = - \sum_{i=1}^{\PXSATNVariables} \frac{1}{2^{i}} \PXSATLexMaxSolution(\PXSATBooleanFormula)_i.
\end{equation*}

\item
Suppose $\PXSATBooleanFormula$ is not satisfiable.
Then,
$$
\OptSolSet\EqrefLEXSATBLP{2}{1}{(\PXSATBooleanFormula)} = \{ (\PXSATTruthAssignment, \PXSATObjVariable, \PXSATAuxVarOne, \PXSATAuxVarTwo, \PXSATOneIndicator) : \text{ $\PXSATTruthAssignment = \PXSATOneIndicator = \OneVector$, $(\PXSATTruthAssignment, \PXSATObjVariable, \PXSATAuxVarOne, \PXSATAuxVarTwo) \in \FeasSolSet\EqrefSATBLP{2}{1}{(\PXSATBooleanFormula)}$} \}.
$$
and
\begin{equation*}
\OptValFunc\EqrefLEXSATBLP{2}{1}{(\PXSATBooleanFormula)} = \frac{1}{2^{\PXSATNVariables}}.
\end{equation*}
\end{enumerate}
\end{lemma}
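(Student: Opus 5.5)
Since $\PXSATOneIndicator$ enters the follower's objective with coefficient $\OneVector$ and its constraints decouple from $\PXSATAuxVarTwo$, I would first solve the follower's problem \eqref{eq:LEXSATBLPSecond} explicitly. For fixed $(\PXSATTruthAssignment, \PXSATObjVariable, \PXSATAuxVarOne)$, the follower minimizes $(\PXSATCostCoefTwo^{\PXSATBooleanFormula})^{\top}\PXSATAuxVarTwo$ over the original constraints exactly as in \eqref{eq:SATBLPSecond}. Separately, he minimizes $\OneVector^{\top}\PXSATOneIndicator$ subject to $\max\{0, 2\PXSATTruthAssignment - \OneVector\} \le \PXSATOneIndicator \le \OneVector$, which has the unique solution $\PXSATOneIndicator_i = \max\{0, 2\PXSATTruthAssignment_i - 1\}$. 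Feasibility also needs $\PXSATTruthAssignment \le \OneVector$, and this is implied by the bounds already present in \eqref{eq:SATBLPSecond}. Hence $(\PXSATTruthAssignment, \PXSATObjVariable, \PXSATAuxVarOne, \PXSATAuxVarTwo, \PXSATOneIndicator)$ is feasible for \eqref{eq:LEXSATBLP} if and only if $(\PXSATTruthAssignment, \PXSATObjVariable, \PXSATAuxVarOne, \PXSATAuxVarTwo)$ is feasible for \eqref{eq:SATBLP} and $\PXSATOneIndicator_i = \max\{0, 2\PXSATTruthAssignment_i - 1\}$. The leader's objective is therefore $f(\PXSATTruthAssignment, \PXSATObjVariable, \PXSATAuxVarOne, \PXSATAuxVarTwo) - \sum_i 2^{-i}\max\{0, 2\PXSATTruthAssignment_i - 1\}$, where $f = \PXSATObjective{}$. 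For binary $\PXSATTruthAssignment$ we get $\PXSATOneIndicator = \PXSATTruthAssignment$.

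Next I would evaluate the objective on binary points. By Lemma~\ref{lemma:SolutionOfSATBLP}~(i), each binary $\PXSATTruthAssignment$ gives a unique feasible point with $\PXSATAuxVarTwo = \ZeroVector$, so $f = 1 - \PXSATObjVariable$. The objective is then $0 - \sum 2^{-i}\PXSATTruthAssignment_i$ if $\PXSATTruthAssignment$ satisfies $\PXSATBooleanFormula$, and $1 - \sum 2^{-i}\PXSATTruthAssignment_i$ otherwise. The map $\PXSATTruthAssignment \mapsto \sum 2^{-i}\PXSATTruthAssignment_i$ is strictly monotone in lexicographic order, since it is the binary expansion. Among binary points, the minimizer is therefore $\PXSATLexMaxSolution(\PXSATBooleanFormula)$ in the satisfiable case, because any satisfying assignment beats any non-satisfying one: the former has value at most $0$ and the latter value greater than $0$, as $\sum 2^{-i}\PXSATTruthAssignment_i \le 1 - 2^{-\PXSATNVariables} < 1$. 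In the unsatisfiable case the minimizer is $\OneVector$, with value $1 - (1 - 2^{-\PXSATNVariables}) = 2^{-\PXSATNVariables}$. These give the claimed values.

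The main obstacle is to rule out fractional $\PXSATTruthAssignment$, that is, to show that every feasible non-binary point has strictly larger objective than the binary optimum. I would use Lemma~\ref{lemma:SolutionOfSATBLP}~(iv). Take a feasible point with leader objective at most the binary optimum, which is at most $2^{-\PXSATNVariables} \le 1/2$. Since the bonus term is at most $1$, this gives $f < 2.5$, so (iv) yields a nearest binary feasible point $\bar{\PXSATTruthAssignment}$ with $\bar f \le f$. The delicate part is comparing the bonus terms. Rounding changes $\max\{0, 2\PXSATTruthAssignment_i - 1\}$, and the bonus can exceed that of $\bar{\PXSATTruthAssignment}$ when coordinates are near $1/2$ but above it. I would handle this by examining Jeroslow's construction, where $(\PXSATCostCoefOne^{\PXSATBooleanFormula})^{\top}\PXSATAuxVarTwo$ penalizes fractionality with a penalty that grows like the distance to the nearest binary value and carries a coefficient at least of order $1$. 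I expect to need an inequality of the form $f - \bar f \ge \kappa \|\PXSATTruthAssignment - \bar{\PXSATTruthAssignment}\|_\infty$ with $\kappa$ exceeding the Lipschitz constant $\sum_i 2^{1-i} < 2$ of the bonus term. If Jeroslow's penalty coefficient is too small, I would rescale $\PXSATCostCoefOne^{\PXSATBooleanFormula}$ by a constant, for example $2$, which keeps the coefficients polynomial. With this inequality, $f - \text{bonus} > \bar f - \overline{\text{bonus}}$ whenever $\PXSATTruthAssignment$ is not binary, and this strictness gives exact equality of the optimal solution sets. A last check is that ties among binary points cannot occur, which follows from the injectivity of the binary expansion. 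This yields both assertions.
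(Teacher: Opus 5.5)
You follow the paper's outline: solve the follower in closed form, evaluate binary points, then round fractional points with Lemma~\ref{lemma:SolutionOfSATBLP}~(iv). The step that excludes fractional $s$ is not actually carried out, though, and the reason you give for it being delicate is false. You assert that the bonus $\sum_i 2^{-i}\max\{0,2s_i-1\}$ at a fractional point can exceed the bonus $\sum_i 2^{-i}\bar{s}_i$ at its rounding. It cannot. If $s_i>1/2$ then $\bar{s}_i=1\ge 2s_i-1$, and if $s_i\le 1/2$ then $\max\{0,2s_i-1\}=0\le \bar{s}_i$. So rounding can only increase the bonus, and together with $\bar f\le f$ from (iv), both terms favour the rounded point. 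The inequality $f-\bar f\ge \kappa\|s-\bar s\|_\infty$ that you plan to extract from Jeroslow's construction is not among the properties recorded in Lemma~\ref{lemma:SolutionOfSATBLP}. Your fallback of rescaling $c^{F}$ would change the instance \eqref{eq:LEXSATBLP} and would require re-checking Lemma~\ref{lemma:SolutionOfSATBLP}. As written, the proposal therefore does not prove the lemma for the instance in the statement.

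What you do need is \emph{strictness}, and that is a genuine issue. If every fractional coordinate of $s$ lies in $(0,1/2)$, rounding leaves the bonus unchanged, so the strict gain must come from $f>\bar f$. This follows from Lemma~\ref{lemma:SolutionOfSATBLP}~(ii)--(iii), not from any quantitative penalty. If $s$ is not binary, then $(s,t,u,v)$ is feasible but not optimal for \eqref{eq:SATBLP}, so $f>\OptValFunc\eqref{eq:SATBLP}$. The rounded point is binary and feasible, so by (i) and uniqueness it has $\bar v=\ZeroVector$ and $\bar f=1-\bar t\in\{0,1\}$. In the unsatisfiable case $\bar t=0$, so $\bar f=1=\OptValFunc\eqref{eq:SATBLP}$. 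In the satisfiable case, you restrict to points whose objective is at most the binary optimum, which is $\le 0$. This gives $f\le\sum_i 2^{-i}\max\{0,2s_i-1\}<1$, so $\bar f\le f<1$ forces $\bar f=0=\OptValFunc\eqref{eq:SATBLP}$. Either way $f>\bar f$. Combined with the correct bonus comparison, this yields a strictly better binary feasible point of \eqref{eq:LEXSATBLP} (with $g=\bar s$), which is the desired contradiction. With this replacement your argument is complete. It is even slightly tidier than the paper's: comparing every feasible point of value at most the binary optimum against a binary point establishes that an optimal solution exists, rather than reasoning about optimal solutions assumed to exist.
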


\begin{proof}
We only prove assertion~(i) since the proof of assertion~(ii) is similar.

Suppose $\PXSATBooleanFormula$ is satisfiable.
By Lemma~\ref{lemma:SolutionOfSATBLP}, we have $\OptValFunc\EqrefLEXSATBLP{2}{1}{(\PXSATBooleanFormula)} \le \OptValFunc\EqrefSATBLP{2}{1}{(\PXSATBooleanFormula)} = 0$.
It follows that $(\PXSATTruthAssignment, \PXSATObjVariable, \PXSATAuxVarOne, \PXSATAuxVarTwo, \PXSATOneIndicator) \in \OptSolSet\EqrefLEXSATBLP{2}{1}{(\PXSATBooleanFormula)}$ implies
\begin{equation}
\PXSATObjective{}
< 1,
\label{eq:Lemma:SolutionOfCompactLextSATBLP:StepOne}
\end{equation}
for otherwise the objective value of  $(\PXSATTruthAssignment, \PXSATObjVariable, \PXSATAuxVarOne, \PXSATAuxVarTwo, \PXSATOneIndicator)$ is
$$
\PXSATObjective{}
- \sum_{i = 1}^{\PXSATNVariables} \frac{1}{2^{i}} \PXSATOneIndicator{}_{i}
\ge
1 - \sum_{i = 1}^{\PXSATNVariables} \frac{1}{2^{i}} \PXSATOneIndicator{}_{i}
>
0
\ge \OptValFunc\EqrefLEXSATBLP{2}{1}{(\PXSATBooleanFormula)}.
$$

Now, we show that
\begin{equation}
(\PXSATTruthAssignment, \PXSATObjVariable, \PXSATAuxVarOne, \PXSATAuxVarTwo, \PXSATOneIndicator) \in \OptSolSet\EqrefLEXSATBLP{2}{1}{(\PXSATBooleanFormula)}
\Rightarrow
(\PXSATTruthAssignment, \PXSATObjVariable, \PXSATAuxVarOne, \PXSATAuxVarTwo) \in \OptSolSet\EqrefSATBLP{2}{1}{(\PXSATBooleanFormula)}.
\label{eq:Lemma:SolutionOfCompactLextSATBLP:StepTwo}
\end{equation}
For the sake of contradiction, suppose otherwise and let $(\PXSATTruthAssignment, \PXSATObjVariable, \PXSATAuxVarOne, \PXSATAuxVarTwo, \PXSATOneIndicator) \in \OptSolSet\EqrefLEXSATBLP{2}{1}{(\PXSATBooleanFormula)}$ be such that $(\PXSATTruthAssignment, \PXSATObjVariable, \PXSATAuxVarOne, \PXSATAuxVarTwo) \not\in \OptSolSet\EqrefSATBLP{2}{1}{(\PXSATBooleanFormula)}$.
Then, in light of Lemma~\ref{lemma:SolutionOfSATBLP} and \eqref{eq:Lemma:SolutionOfCompactLextSATBLP:StepOne}, there exists a unique binary vector $(\bar{\PXSATTruthAssignment}, \bar{\PXSATObjVariable}, \bar{\PXSATAuxVarOne}, \bar{\PXSATAuxVarTwo})$ that is the closest to $(\PXSATTruthAssignment, \PXSATObjVariable, \PXSATAuxVarOne, \PXSATAuxVarTwo)$.
Moreover, one can show that $(\bar{\PXSATTruthAssignment}, \bar{\PXSATObjVariable}, \bar{\PXSATAuxVarOne}, \bar{\PXSATAuxVarTwo}, \bar{\PXSATTruthAssignment})$ is feasible to $\EqrefLEXSATBLP{2}{1}{(\PXSATBooleanFormula)}$ and has a strictly better objective value, contradicting the assumption.
Thus, \eqref{eq:Lemma:SolutionOfCompactLextSATBLP:StepTwo} holds.

For any $(\PXSATTruthAssignment, \PXSATObjVariable, \PXSATAuxVarOne, \PXSATAuxVarTwo, \PXSATOneIndicator) \in \OptSolSet\EqrefLEXSATBLP{2}{1}{(\PXSATBooleanFormula)}$, \eqref{eq:Lemma:SolutionOfCompactLextSATBLP:StepTwo} implies $\PXSATTruthAssignment$ is a binary vector satisfying $\PXSATBooleanFormula$ and $\PXSATObjective{} = 0$.
Thus,
\begin{align*}
\OptValFunc\EqrefLEXSATBLP{2}{1}{(\PXSATBooleanFormula)}
=
\min_{\PXSATTruthAssignment} \left\{- \sum_{i = 1}^{\PXSATNVariables} \frac{1}{2^{i}} \PXSATTruthAssignment{}_i : \text{$\PXSATTruthAssignment$ satisfies $\PXSATBooleanFormula$} \right\},
\end{align*}
which is minimized by the lexicographically maximum binary vector satisfying $\PXSATBooleanFormula$.
\end{proof}

Instance~\EqrefLEXSATBLP{2}{1}{(\PXSATBooleanFormula)} uses fractional values of exponentially small magnitude.
We can remove them by introducing auxiliary variables.
Consider the following bilevel LP:
\begin{align}
\inf_{\PXSATTruthAssignment, \PXSATObjVariable, \PXSATAuxVarOne, \PXSATAuxVarTwo, \PXSATOneIndicator, \PXSATOneIndicatorExpanded}
\ &
\PXSATObjective
- \sum_{i = 1}^{\PXSATNVariables} \PXSATOneIndicatorExpanded{}_{i i}
\tag{\EqtagCompactLEXSATBLP{2}{1}{(\PXSATBooleanFormula)}}
\label{eq:CompactLEXSATBLP}
\\
\text{s.t.}
\ &
(\PXSATAuxVarTwo, \PXSATOneIndicator, \PXSATOneIndicatorExpanded)
\in \mathcal{S}\eqref{eq:LEXSATBLPSecond})
\notag
\end{align}
and
\begin{align}
\inf_{\PXSATAuxVarTwo, \PXSATOneIndicator, \PXSATOneIndicatorExpanded}
\ &
(\PXSATCostCoefTwo^{\PXSATBooleanFormula})^{\top} \PXSATAuxVarTwo
+ \OneVector^{\top} \PXSATOneIndicator
\tag{\EqtagCompactLEXSATBLP{2}{2}{(\PXSATTruthAssignment, \PXSATObjVariable, \PXSATAuxVarOne)}}
\label{eq:CompactLEXSATBLPSecond}
\\
\text{s.t.}
\ &
\mathmakebox[0pt][l]{
\PXSATQBFConstraintCoef^{\PXSATBooleanFormula}_{1} \PXSATTruthAssignment
+ \PXSATQBFConstraintCoef^{\PXSATBooleanFormula}_{2} \PXSATObjVariable
+ \PXSATQBFConstraintCoef^{\PXSATBooleanFormula}_{3} \PXSATAuxVarOne
+ \PXSATQBFConstraintCoef^{\PXSATBooleanFormula}_{4} \PXSATAuxVarTwo \ge \PXSATQBFConstraintRHS^{\PXSATBooleanFormula},
}
\notag
\\
&
2 \PXSATTruthAssignment - \OneVector \le \PXSATOneIndicator,
\notag
\\
&
2 \PXSATOneIndicatorExpanded{}_{i 1} = \PXSATOneIndicator{}_i,
&& \forall i = 1, \ldots, \PXSATNVariables,
\notag
\\
&
2 \PXSATOneIndicatorExpanded{}_{i p + 1} = \PXSATOneIndicatorExpanded{}_{i p},
&& \forall i = 1, \ldots, \PXSATNVariables, p = 1, \ldots, \PXSATNVariables - 1,
\notag
\\
&
\ZeroVector \le \PXSATOneIndicator \le \OneVector, \ZeroVector \le \PXSATOneIndicatorExpanded \le \OneVector.
\notag
\end{align}
It is straightforward to observe that $(\PXSATTruthAssignment, \PXSATObjVariable, \PXSATAuxVarOne, \PXSATAuxVarTwo, \PXSATOneIndicator) \in \mathcal{F}\EqrefLEXSATBLP{2}{1}{(\PXSATBooleanFormula)}$ if and only if $(\PXSATTruthAssignment, \PXSATObjVariable, \PXSATAuxVarOne, \PXSATAuxVarTwo, \PXSATOneIndicator, \PXSATOneIndicatorExpanded) \in \mathcal{F}\EqrefCompactLEXSATBLP{2}{1}{(\PXSATBooleanFormula)}$, and for any $(\PXSATTruthAssignment, \PXSATObjVariable, \PXSATAuxVarOne, \PXSATAuxVarTwo, \PXSATOneIndicator, \PXSATOneIndicatorExpanded) \in \mathcal{F}\EqrefCompactLEXSATBLP{2}{1}{(\PXSATBooleanFormula)}$, we have $\PXSATOneIndicatorExpanded{}_{i i} = \PXSATOneIndicator{}_i / 2^{i}$ for all $i$.
Moreover, \EqrefCompactLEXSATBLP{2}{1}{(\PXSATBooleanFormula)} satisfies conditions~\ref{Condition1}--\ref{Condition3}.
We are now ready to prove the hardness of \SearchProblemObj{2}.

\begin{proof}[Proof of Theorem~\ref{theorem:search-obj-hardness-bilevel}]
We need only show that \DecisionProblemModifierNoLinkBoxedPoly{\SearchProblemObj{2}} is \ClassDeltaP{2}-hard.
Let $\PXSATBooleanFormula \in \PXSATBFSet$, and construct \EqrefCompactLEXSATBLP{2}{1}{(\PXSATBooleanFormula)}.
Note that \EqrefCompactLEXSATBLP{2}{1}{(\PXSATBooleanFormula)} is a rational \DecisionProblemModifierNoLinkBoxedPoly{\KLPInstances{2}} instance.
By Lemma~\ref{lemma:SolutionIsLexicographicallyMaximum}, if $\OptValFunc\EqrefCompactLEXSATBLP{2}{1}{(\PXSATBooleanFormula)} > 0$, $\PXSATBooleanFormula$ is not satisfiable.
Otherwise, again using Lemma~\ref{lemma:SolutionIsLexicographicallyMaximum}, one can compute the lexicographically maximum binary vector satisfying $\PXSATBooleanFormula$.
From this argument, it is trivial to construct a metric reduction from \SearchProblemThreeSAT{} to \DecisionProblemModifierNoLinkBoxedPoly{\SearchProblemObj{2}}~\cite{krentel1986complexity}.
\end{proof}

\subsubsection{Case $k \ge 3$}
\label{sec:hardnessproofs:searchproblems:kgethree}

Next, we show the following.

\begin{theorem}
\label{theorem:search-obj-hardness-general}
Let $k \ge 3$.
The search problem \SearchProblemObj{k} is $\ClassFDeltaP{k}$-hard, and this remains valid even when the input is restricted to satisfy conditions~\ref{Condition1}--\ref{Condition3}.
\end{theorem}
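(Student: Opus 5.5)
We will mirror the bilevel argument of Theorem~\ref{theorem:search-obj-hardness-bilevel}, replacing Jeroslow's bilevel encoding of \DecisionProblemThreeSAT{} by his $k$-level encoding of $\DecisionProblemQThreeSAT{(k-1)}$, and reduce $\SearchProblemQThreeSAT{(k-1)}$, which is $\ClassFDeltaP{k}$-complete, to \DecisionProblemModifierNoLinkBoxedPoly{\SearchProblemObj{k}}.

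\textbf{Step 1: the base instance.} Given $\PXQSATQBF \in \PXQSATQBFSet{k-1}$ with first block of variables $\PXQSATBoolVariable{}_1$, we take the $k$-level LP constructed by Jeroslow~\cite{Jeroslow1985} (the one underlying the $\ClassSigmaP{k-1}$-hardness of \DecisionProblemVal{k}). In it the first player chooses $\PXQSATTruthAssignment{}_1$ (relaxed to $[0,1]$) together with auxiliary variables, the lower levels simulate the alternating quantifiers, and the optimal value is $\PXQSATYesObjVal{}$ if $\PXQSATQBF$ is true and $\PXQSATNoObjVal{}$ otherwise. We need the analogue of Lemma~\ref{lemma:SolutionOfSATBLP}: (a) for binary $\PXQSATTruthAssignment{}_1$, the induced leader objective is $0$ if $\PXQSATQBF(\PXQSATTruthAssignment{}_1)$ is true and $1$ otherwise; and (b) a rounding property: any feasible point with objective $<2.5$ (or some constant threshold) can be replaced by the nearest binary $\PXQSATTruthAssignment{}_1$ without worsening the objective. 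Both should follow from Jeroslow's inductive lemmas (the analogue of his Lemma~4.4 at each level), and it suffices to check that his construction satisfies \ref{Condition1}--\ref{Condition3} after Proposition~\ref{prop:constraint-forwarding} and dummy-variable padding.

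\textbf{Step 2: lexicographic tie-breaking.} As in \EqrefLEXSATBLP{2}{1}{(\PXSATBooleanFormula)}, we add to the \emph{second} player's problem variables $\PXSATOneIndicator \in [0,1]^{n_1}$ with constraints $2\PXQSATTruthAssignment{}_1 - \OneVector \le \PXSATOneIndicator$ and a cost $\OneVector^\top \PXSATOneIndicator$; these are inserted in the last player's constraints (no linking), and since $\PXSATOneIndicator$ appears only in the second player's objective and the leader's, the behaviour of players $3,\dots,k$ is unchanged. The leader's objective gets the term $-\sum_i 2^{-i}\PXSATOneIndicator{}_i$, which is then replaced by $-\sum_i \PXSATOneIndicatorExpanded{}_{ii}$ via the halving chains as in \EqrefCompactLEXSATBLP{2}{1}{(\PXSATBooleanFormula)} to respect \ref{Condition3}. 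One subtlety: the second player in Jeroslow's construction may itself be a ``universal'' player optimizing something else; adding $\OneVector^\top\PXSATOneIndicator$ to its objective is harmless because $\PXSATOneIndicator$ is separable (its optimal value is $\max\{0,2\PXQSATTruthAssignment{}_1-\OneVector\}$ componentwise regardless of other variables).

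\textbf{Step 3: correctness.} Repeat the proof of Lemma~\ref{lemma:SolutionIsLexicographicallyMaximum}: if $\PXQSATQBF$ is true the optimal value is $\le 0$, so any near-optimal point has base objective $<1$, rounding gives a binary $\PXQSATTruthAssignment{}_1$ with $\PXQSATQBF(\PXQSATTruthAssignment{}_1)$ true and no worse objective, and the optimal value equals $-\sum_i 2^{-i}\PXQSATQBFLexMaxSolution(\PXQSATQBF)_i$; otherwise it equals $2^{-n_1}>0$. Note we only need the \emph{value} (attainment is irrelevant for \SearchProblemObj{k}, which matters since $k\ge 3$ instances may not attain; cf.\ Section~\ref{subsec:motivational_example}), so Step 3 should be phrased with infima and approximate optimizers. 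Decoding the binary expansion of the value gives a metric reduction computable in logarithmic space. The main obstacle is Step 1(b): verifying that Jeroslow's rounding argument survives in the multilevel setting for fractional $\PXQSATTruthAssignment{}_1$ with the strict-gap margin needed to absorb the perturbation $\sum_i 2^{-i}\PXSATOneIndicator{}_i<1$; if his penalty is too weak I would scale the base objective by a constant (e.g.\ $2$) to widen the gap.
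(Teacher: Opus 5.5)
Your proposal is correct and follows the paper's route. The paper takes Jeroslow's $k$-level instance \EqrefNewQLPOriginal{k}{1}{(\PXQSATQBF)} for $\PXQSATQBF \in \PXQSATQBFSet{k-1}$, adds a lexicographic tie-breaker to the leader's objective, proves the analogue of Lemma~\ref{lemma:SolutionIsLexicographicallyMaximum} (Lemma~\ref{lemma:theorem:search-obj-hardness-general}), removes the exponentially small coefficients with halving chains, and reduces $\SearchProblemQThreeSAT{(k-1)}$. The one difference is that the paper does not use your indicator $\PXSATOneIndicator$ and instead subtracts $\sum_i \PXQSATTruthAssignment{}_{1i}/2^i$ directly: by Lemma~\ref{lemma:ValueOfQLP}(iii), any fractional $\PXQSATTruthAssignment{}_1$ forces $\PXQSATAuxVarPen{}_1 = 1$, a flat penalty of $2$ that no tie-breaking term below $1$ can offset, and this also makes the rounding step you flagged in Step~1(b) immediate.
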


\MyCitet{Jeroslow}{Jeroslow1985} extends the instance~$\EqrefSATBLP{2}{1}{(\PXSATBooleanFormula)}$ to handle alternative quantifiers.
Given $k \ge 3$ and $\PXQSATQBF \in \PXQSATQBFSet{k-1}$, construct a \DecisionProblemModifierNoLinkBoxedPoly{\KLPInstances{k}} instance as follows.
For $l = 1, \ldots, k - 1$, let $\PXQSATNVariables{}_l$ be the number of variables in the $l$-th set of Boolean variables ($\PXQSATBoolVariable{}_l$ in \eqref{eq:QBFKOdd}).
The first player's decision variable is $\PXQSATTruthAssignment{}_1 \in \mathbb{R}^{n_1}$, the second player's decision variable is $\PXQSATTruthAssignment{}_2 \in \mathbb{R}^{n_2}$, and similarly, the $l$-th player for $l=3,\ldots,k-2$ chooses $\PXQSATTruthAssignment{}_l \in \mathbb{R}^{n_l}$.
The $(k-1)$-th player chooses $\PXQSATTruthAssignment{}_{k-1} \in \mathbb{R}^{n_{k-1}}$, $\PXQSATObjVariable \in \mathbb{R}$, and $\PXQSATAuxVarOne$, while the $k$-th player chooses $\PXQSATAuxVarPen \in \mathbb{R}^k$ and $\PXQSATAuxVarTwo$.
The instance is defined by
\begin{align}
&
\inf_{\PXQSATTruthAssignment{}_{l}, \ldots, \PXQSATTruthAssignment{}_{k-1}, \PXQSATObjVariable, \PXQSATAuxVarOne, \PXQSATAuxVarPen, \PXQSATAuxVarTwo}
\left\{
\theta^1_k(\PXQSATObjVariable)
+
2 \PXQSATAuxVarPen{}_1
:
(\PXQSATTruthAssignment{}_2, \ldots, \PXQSATTruthAssignment{}_{k-1}, \PXQSATObjVariable, \PXQSATAuxVarOne, \PXQSATAuxVarPen, \PXQSATAuxVarTwo)
\in \OptSolSet(\EqtagNewQLPOriginal{k}{2}{(\PXQSATTruthAssignment{}_1, \PXQSATQBF)})
\right\}
\tag{\EqtagNewQLPOriginal{k}{1}{(\PXQSATQBF)}},
\label{eq:QLPOriginal}
\end{align}
$$
\vdots
$$
\begin{align}
&
\inf_{\PXQSATTruthAssignment{}_{k-2}, \PXQSATTruthAssignment{}_{k-1}, \PXQSATObjVariable, \PXQSATAuxVarOne, \PXQSATAuxVarPen, \PXQSATAuxVarTwo}
\left\{
\theta^{k-2}_k(\PXQSATObjVariable)
+
2 \PXQSATAuxVarPen{}_{k-2}
:
(\PXQSATTruthAssignment{}_{k-1}, \PXQSATObjVariable, \PXQSATAuxVarOne, \PXQSATAuxVarPen, \PXQSATAuxVarTwo)
\in \mathcal{S}\eqref{eq:QLPOriginalSecondLast}
\right\}
\tag{\EqtagNewQLPOriginal{k}{k-2}{(\PXQSATTruthAssignment{}_{1}, \ldots, \PXQSATTruthAssignment{}_{k-3}, \PXQSATQBF)}},
\label{eq:QLPOriginalThirdLast}
\end{align}
\begin{align}
&
\inf_{\PXQSATTruthAssignment{}_{k-1}, \PXQSATObjVariable, \PXQSATAuxVarOne, \PXQSATAuxVarPen, \PXQSATAuxVarTwo}
\left\{
\theta^{k-1}_k(\PXQSATObjVariable)
+ (\PXQSATCostCoefOne^{\PXQSATQBF})^{\top} \PXQSATAuxVarTwo
:
\begin{array}{ll}
(\PXQSATAuxVarPen, \PXQSATAuxVarTwo)
\in \mathcal{S}\eqref{eq:QLPOriginalLast})
\end{array}
\right\},
\tag{\EqtagNewQLPOriginal{k}{k - 1}{(\PXQSATTruthAssignment{}_1, \ldots, \PXQSATTruthAssignment{}_{k-2}, \PXQSATQBF)}}
\label{eq:QLPOriginalSecondLast}
\end{align}
and
\begin{align}
&
\inf_{\PXQSATAuxVarPen, \PXQSATAuxVarTwo}
\left\{
(\PXQSATCostCoefTwo^{\PXQSATQBF})^{\top} \PXQSATAuxVarTwo
:
\begin{array}{ll}
\displaystyle
\sum_{l=1}^{k-1} \PXQSATQBFConstraintCoef^{\PXQSATQBF}_{l} \PXQSATTruthAssignment{}_{l}
+ \PXQSATQBFConstraintCoef^{\PXQSATQBF}_{k} \PXQSATObjVariable
+ \PXQSATQBFConstraintCoef^{\PXQSATQBF}_{k+1} \PXQSATAuxVarOne
+ \PXQSATQBFConstraintCoef^{\PXQSATQBF}_{k+2} \PXQSATAuxVarPen
+ \PXQSATQBFConstraintCoef^{\PXQSATQBF}_{k+3} \PXQSATAuxVarTwo \ge \PXQSATQBFConstraintRHS^{\PXQSATQBF}
\end{array}
\right\},
\tag{\EqtagNewQLPOriginal{k}{k}{(\PXQSATTruthAssignment{}_{1}, \ldots, \PXQSATTruthAssignment{}_{k-1}, \PXQSATObjVariable, \PXQSATAuxVarOne, \PXQSATQBF)}}
\label{eq:QLPOriginalLast}
\end{align}
where $\theta^l_k(\PXQSATObjVariable) = 1 - \PXQSATObjVariable$ if $k - l$ is odd and $\theta^l_k(\PXQSATObjVariable) = \PXQSATObjVariable$ otherwise.
Instance $\EqrefNewQLPOriginal{k}{1}{(\PXQSATQBF)}$ satisfies conditions~\ref{Condition1}--\ref{Condition3} (after appending polynomially many dummy variables).
Furthermore, we have that $\OptValFunc\EqrefNewQLPOriginal{k}{1}{(\PXQSATQBF)} = \PXQSATYesObjVal{}$ if $\PXQSATQBF$ \PXQSATIsTrue{} and $\OptValFunc\EqrefNewQLPOriginal{k}{1}{(\PXQSATQBF)} = \PXQSATNoObjVal{}$ otherwise.
This instance is constructed so that the last two players evaluate the first $(k - 2)$ players' choices in the following sense.
For example, suppose $k = 4$ and $\PXQSATQBF \in \PXQSATQBFSet{3}$ is given as
$\exists \PXQSATBoolVariable{}_{1}$ $\forall \PXQSATBoolVariable{}_{2}$ $\exists \PXQSATBoolVariable{}_{3}$ $\PXQSATBooleanFormula(\PXQSATBoolVariable{}_{1}, \PXQSATBoolVariable{}_{2}, \PXQSATBoolVariable{}_{3})$.
If the first two players choose binary vectors $\PXQSATTruthAssignment{}_1$ and $\PXQSATTruthAssignment{}_2$, at the optimality of the third player, we have $\theta_4^1(\PXQSATObjVariable) = \PXQSATYesObjVal{}$ (and $\theta_4^2(\PXQSATObjVariable) = \PXQSATNoObjVal{}$) if $\exists \PXQSATBoolVariable{}_{3} \ \PXQSATBooleanFormula(\PXQSATTruthAssignment{}_{1}, \PXQSATTruthAssignment{}_{2}, \PXQSATBoolVariable{}_{3})$ \PXQSATIsTrue{}, and we have $\theta_4^1(\PXQSATObjVariable) = \PXQSATNoObjVal{}$ (and $\theta_4^2(\PXQSATObjVariable) = \PXQSATYesObjVal{}$) otherwise.
In other words, the first player tries to find $\PXQSATTruthAssignment{}_1$ such that for any choice $\PXQSATTruthAssignment{}_2$ of the second player, $
\exists \PXQSATBoolVariable{}_{3}
\ \PXQSATBooleanFormula(\PXQSATTruthAssignment{}_{1}, \PXQSATTruthAssignment{}_{2}, \PXQSATBoolVariable{}_{3})
$
\PXQSATIsTrue{}, and achieves the objective value of $\PXQSATYesObjVal{}$ in this case.

Variable $\PXQSATAuxVarPen$ is used as a penalty for the use of fractional values.
If the first player chooses a fractional value, the optimality of the last two players implies that $\PXQSATAuxVarPen{}_1 = 1$.
If the first player chooses a binary vector, the other players are indifferent to the values of $\PXQSATAuxVarPen{}_1$, and due to the optimistic assumption, the first player can assume $\PXQSATAuxVarPen{}_1 = 0$ is chosen.
We note that, in the original construction of \MyCitet{Jeroslow}{Jeroslow1985}, the penalty term is defined separately for each component of $\PXQSATTruthAssignment{}_1$.
However, it is straightforward to aggregate these componentwise penalties into a single term, as done here.

Formally, we have the following.
\begin{lemma}
\label{lemma:ValueOfQLP}
Suppose $k \ge 2$ and $\PXQSATQBF \in \PXQSATQBFSet{k-1}$.
\begin{enumerate}
\item
If $\PXQSATTruthAssignment{}_1$ is binary and
$\PXQSATQBF(\PXQSATTruthAssignment{}_1)$ \PXQSATIsTrue{}, then $\OptSolSet\EqrefNewQLPOriginal{k}{2}{(\PXQSATTruthAssignment{}_1, \PXQSATQBF)} \neq \emptyset$,
and any
\[
(\PXQSATTruthAssignment{}_2, \ldots, \PXQSATTruthAssignment{}_{k-1},
\PXQSATObjVariable, \PXQSATAuxVarOne,
\PXQSATAuxVarPen, \PXQSATAuxVarTwo)
\in
\OptSolSet\EqrefNewQLPOriginal{k}{2}{(\PXQSATTruthAssignment{}_1, \PXQSATQBF)}
\]
satisfies $\theta_k^1(\PXQSATObjVariable) = 0$.

\item
If $\PXQSATTruthAssignment{}_1$ is binary and
$\PXQSATQBF(\PXQSATTruthAssignment{}_1)$ \PXQSATIsNotTrue{}, then $\OptSolSet\EqrefNewQLPOriginal{k}{2}{(\PXQSATTruthAssignment{}_1, \PXQSATQBF)} \neq \emptyset$, and any
\[
(\PXQSATTruthAssignment{}_2, \ldots, \PXQSATTruthAssignment{}_{k-1},
\PXQSATObjVariable, \PXQSATAuxVarOne,
\PXQSATAuxVarPen, \PXQSATAuxVarTwo)
\in
\OptSolSet\EqrefNewQLPOriginal{k}{2}{(\PXQSATTruthAssignment{}_1, \PXQSATQBF)}
\]
satisfies $\theta_k^1(\PXQSATObjVariable) = 1$.

\item
Let $l \le k - 2$ and suppose
$\PXQSATTruthAssignment{}_l \notin \{0,1\}^{n_l}$.
If
\[
(\PXQSATTruthAssignment{}_{k-1}, \PXQSATObjVariable,
\PXQSATAuxVarOne, \PXQSATAuxVarPen, \PXQSATAuxVarTwo)
\in
\OptSolSet\EqrefNewQLPOriginal{k}{k-1}{
(\PXQSATTruthAssignment{}_1, \ldots,
\PXQSATTruthAssignment{}_{k-2}, \PXQSATQBF)},
\]
then $\PXQSATAuxVarPen{}_l = 1$.

\item
Let $l \le k - 2$ and suppose
$\PXQSATTruthAssignment{}_l \in \{0,1\}^{n_l}$.
If
\[
(\PXQSATTruthAssignment{}_{k-1}, \PXQSATObjVariable,
\PXQSATAuxVarOne, \PXQSATAuxVarPen, \PXQSATAuxVarTwo)
\in
\OptSolSet\EqrefNewQLPOriginal{k}{k-1}{
(\PXQSATTruthAssignment{}_1, \ldots,
\PXQSATTruthAssignment{}_{k-2}, \PXQSATQBF)},
\]
then
\[
(\PXQSATTruthAssignment{}_{k-1}, \PXQSATObjVariable,
\PXQSATAuxVarOne, \PXQSATAuxVarPen', \PXQSATAuxVarTwo)
\in
\OptSolSet\EqrefNewQLPOriginal{k}{k-1}{
(\PXQSATTruthAssignment{}_1, \ldots,
\PXQSATTruthAssignment{}_{k-2}, \PXQSATQBF)},
\]
where
\[
\PXQSATAuxVarPen' =
(\PXQSATAuxVarPen{}_1, \ldots, \PXQSATAuxVarPen{}_{l-1},
0,
\PXQSATAuxVarPen{}_{l+1}, \ldots,
\PXQSATAuxVarPen{}_{k-2}).
\]
\end{enumerate}
\end{lemma}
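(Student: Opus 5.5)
The plan is to reduce all four assertions to the corresponding properties of Jeroslow's original construction, recorded in Lemma~4.4 of \MyCite{Jeroslow1985} and its $k$-level analogue there. The only change is that here the componentwise penalties are aggregated into one scalar $\PXQSATAuxVarPen{}_l$ per player $l \le k-2$. The argument proceeds by backward induction over the levels, from the last two players up to the second.

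\textbf{Last two players.} Fix $(\PXQSATTruthAssignment{}_1, \ldots, \PXQSATTruthAssignment{}_{k-2})$. The key steps are as follows.
\begin{enumerate}
\item In the $k$-th player's problem, $\PXQSATAuxVarPen$ does not appear in the objective $(\PXQSATCostCoefTwo^{\PXQSATQBF})^{\top}\PXQSATAuxVarTwo$. Hence the optimal $\PXQSATAuxVarTwo$ is determined exactly as in the unaggregated construction.
\item The constraints coupling $\PXQSATAuxVarPen{}_l$ with $\PXQSATTruthAssignment{}_l$ are of the form $\PXQSATAuxVarPen{}_l \ge$ (a fractionality measure of $\PXQSATTruthAssignment{}_l$, e.g.\ the maximum over $j$ of the per-component penalty variables in Jeroslow's construction), together with $0 \le \PXQSATAuxVarPen{}_l \le 1$.
\item These constraints admit $\PXQSATAuxVarPen{}_l = 0$ exactly when $\PXQSATTruthAssignment{}_l$ is binary.
\item The $(k-1)$-th player's objective $\theta^{k-1}_k(\PXQSATObjVariable) + (\PXQSATCostCoefOne^{\PXQSATQBF})^{\top}\PXQSATAuxVarTwo$ is also independent of $\PXQSATAuxVarPen$.
\end{enumerate}
Assertion~(iv) then follows: if $\PXQSATTruthAssignment{}_l$ is binary, setting $\PXQSATAuxVarPen{}_l=0$ keeps the $k$-th player's point feasible and optimal, since neither objective changes. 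This argument uses only that $\PXQSATAuxVarPen{}_l$ is otherwise unconstrained.

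For assertion~(iii), we need that at the $(k-1)$-th player's optimum, some component of $\PXQSATTruthAssignment{}_l$ that is fractional forces $\PXQSATAuxVarPen{}_l = 1$. The plan is to invoke Jeroslow's lemma that, for fractional input, the $(k-1)$-th player's optimal response drives the corresponding per-component penalty variable to its upper bound $1$. The aggregation constraint $\PXQSATAuxVarPen{}_l \ge$ that component, combined with $\PXQSATAuxVarPen{}_l \le 1$, then yields $\PXQSATAuxVarPen{}_l = 1$.

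\textbf{Players $2$ through $k-2$.} For assertions~(i) and~(ii), induct downward on $l$ from $k-2$ to $2$. The inductive claim is: if $\PXQSATTruthAssignment{}_1, \ldots, \PXQSATTruthAssignment{}_{l-1}$ are binary, then the $l$-th player's problem has an optimal solution. Moreover, every optimal solution has binary $\PXQSATTruthAssignment{}_l$, $\PXQSATAuxVarPen{}_l = 0$, and $\theta^{l-1}_k(\PXQSATObjVariable)$ equal to the truth value of the residual QBF.

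The inductive step for player $l$ runs as follows.
\begin{enumerate}
\item A binary choice of $\PXQSATTruthAssignment{}_l$ yields objective $\theta^l_k(\PXQSATObjVariable) \in \{0,1\}$, by the inductive hypothesis.
\item A fractional choice forces $2\PXQSATAuxVarPen{}_l = 2$, by~(iii) and the fact that lower levels do not alter $\PXQSATAuxVarPen{}_l$ beyond what the $(k-1)$-th player fixes.
\item Since $\theta \ge 0$, a fractional choice gives objective at least $2$, so it is strictly worse than any binary choice.
\item By optimism and~(iv), the player selects $\PXQSATAuxVarPen{}_l=0$ among its binary responses.
\item The player therefore minimizes $\theta^l_k$ over binary $\PXQSATTruthAssignment{}_l$. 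This minimum is attained, since there are finitely many binary vectors and each has an attained optimal lower-level response by induction.
\item Because $\theta^{l-1}_k = 1-\theta^l_k$, the alternation of $\theta$ matches the alternation of quantifiers.
\end{enumerate}

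\textbf{Main obstacle.} The delicate step is the interaction between $\PXQSATAuxVarPen$ and the intermediate players $l' > l$. These players must not be able to manipulate $\PXQSATAuxVarPen{}_l$, and their optimal sets must be nonempty even when an earlier player plays fractionally. The latter matters because otherwise a fractional choice could make the follower problem infeasible rather than penalized.

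I would handle this as follows. Each intermediate objective $\theta^{l'}_k + 2\PXQSATAuxVarPen{}_{l'}$ involves only $\PXQSATAuxVarPen{}_{l'}$, and by~(iii)--(iv) the last two levels determine $\PXQSATAuxVarPen{}_{l}$ for $l \ne l'$ independently of later choices. The nonemptiness claims would be imported from Jeroslow's Lemma~4.4, which holds for arbitrary $[0,1]$-valued inputs. What remains is to verify that aggregating the penalties preserves Jeroslow's bounds.
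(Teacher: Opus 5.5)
Your route (import Jeroslow's gadget, aggregate the penalties, backward induction over the levels) is the one the paper relies on. The paper itself gives no argument beyond the citation and the remark that aggregation is straightforward.

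\textbf{The gap is in (iv), and it comes from how you secure (iii).} You get (iii) from constraints $q_l \ge q_{lj}$, where $q_{lj}$ are Jeroslow's per-component penalties, which the last two players drive to $1$ when $s_{lj}$ is fractional. You then argue (iv) because $q_l$ is ``otherwise unconstrained''. But players $k-1$ and $k$ form a bilevel LP without linking constraints in which $(s_1,\ldots,s_{k-2})$ enters only through the right-hand side. By Theorem~\ref{theorem:closedness-of-blp-solution-set}, their optimal-solution map therefore has closed graph. You also need, and assert, nonempty optimal sets at fractional inputs. So take fractional $s_l^{(i)} \to \bar s_l \in \{0,1\}^{n_l}$ and corresponding optimal points; these have $q_{lj}=1$. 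All variables are boxed, so a limit point exists. It is optimal at the binary input, and it has $q_{lj}=1$, hence $q_l=1$. Resetting only $q_l$ to $0$ there violates $q_l \ge q_{lj}$, so (iv) fails exactly at points it must cover. Your claim that the constraints ``admit $q_l=0$ exactly when $s_l$ is binary'' cannot rescue this. As a pure feasibility statement it is false, because the projection of the constraint polyhedron onto $(s_l,q_l)$ is convex.

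\textbf{This is not an artifact of your particular aggregation.} Since $q$ appears in neither of the last two objectives, changing $q_l$ alone at an optimal point gives another optimal point whenever it stays feasible. So (iii) says that, at optimal points with fractional input, the only value of $q_l$ compatible with the remaining coordinates is $1$. The least compatible value is a maximum of affine functions of those coordinates, hence continuous, so it is still $1$ at the limit point above. Therefore the following three cannot all hold:
\begin{itemize}
\item (iii);
\item nonempty optimal sets near binary inputs (for $k=3$ this already follows from (i)--(ii), conditions~\ref{Condition1}--\ref{Condition2}, convexity of the projection onto $s_1$, and closedness of the feasible set);
\item (iv) read literally.
\end{itemize}
What you should prove instead, and what your induction for (i)--(ii) actually uses, is a weaker exchange property. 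For binary $s_l$, every optimal point of the last two players can be replaced by another optimal point with the same $s_{k-1}$, $t$ and $q_{l'}$ ($l'\neq l$) but with $q_l=0$, where the auxiliary variables $p,r$ (including your $q_{lj}$) may change. Each intermediate objective depends only on $t$ and that player's own $q_{l'}$, so such a replacement propagates up through levels $k-2,\ldots,l+1$. Establishing it needs a concrete property of the gadget at binary inputs, which your proposal does not supply.
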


Now, consider the following $k$-level instance, which is obtained by modifying the objective function of the first player in \eqref{eq:QLPOriginal}:
\begin{align}
&
\inf_{\PXQSATTruthAssignment{}_{l}, \ldots, \PXQSATTruthAssignment{}_{k-1}, \PXQSATObjVariable, \PXQSATAuxVarOne, \PXQSATAuxVarPen, \PXQSATAuxVarTwo}
\left\{
\theta^1_k(\PXQSATObjVariable)
+
2 \PXQSATAuxVarPen{}_1
-
\sum_{i = 1}^{\PXQSATNVariables{}_1} \PXQSATTruthAssignment{}_{1 i} / 2^{i}
:
\begin{array}{l}
(\PXQSATTruthAssignment{}_{2}, \ldots, \PXQSATTruthAssignment{}_{k-1}, \PXQSATObjVariable, \PXQSATAuxVarOne, \PXQSATAuxVarPen, \PXQSATAuxVarTwo)
\\
\qquad\qquad
\in \mathcal{S}(\EqtagNewQLPOriginal{k}{2}{(\PXQSATTruthAssignment{}_1, \PXQSATQBF)})
\end{array}
\right\}
\tag{\EqtagNewQLPSearch{k}{1}{(\PXQSATQBF)}},
\label{eq:QLPSearch}
\end{align}
We have a result analogous to Lemma~\ref{lemma:SolutionIsLexicographicallyMaximum}.
\begin{lemma}
\label{lemma:theorem:search-obj-hardness-general}
\mbox{}
\begin{enumerate}
\item
Suppose $\PXQSATQBF$ \PXQSATIsTrue{}.
Then, $\OptSolSet\eqref{eq:QLPSearch}$ is nonempty and $(\PXQSATTruthAssignment{}_{l}$, $\ldots$, $\PXQSATTruthAssignment{}_{k-1}$, $\PXQSATObjVariable$, $\PXQSATAuxVarOne$, $\PXQSATAuxVarPen$, $\PXQSATAuxVarTwo) \in \OptSolSet\eqref{eq:QLPSearch}$ implies $\PXQSATTruthAssignment{}_{1} = \PXQSATQBFLexMaxSolution(\PXQSATQBF)$, $\theta_k^1(\PXQSATObjVariable) = 0$ and $\PXQSATAuxVarPen{}_1 = 0$.
In particular, $\OptValFunc\eqref{eq:QLPSearch} = - \sum_{i = 1}^{\PXQSATNVariables{}_1} \PXQSATQBFLexMaxSolution(\PXQSATQBF)_i / 2^i$.
\item
Suppose $\PXQSATQBF$ \PXQSATIsNotTrue{}.
Then, $\OptSolSet\eqref{eq:QLPSearch}$ is nonempty and $(\PXQSATTruthAssignment{}_{l}$, $\ldots$, $\PXQSATTruthAssignment{}_{k-1}$, $\PXQSATObjVariable$, $\PXQSATAuxVarOne$, $\PXQSATAuxVarPen$, $\PXQSATAuxVarTwo) \in \OptSolSet\eqref{eq:QLPSearch}$ implies $\PXQSATTruthAssignment{}_{1} = \OneVector$, $\theta_k^1(\PXQSATObjVariable) = 1$ and $\PXQSATAuxVarPen{}_1 = 0$.
In particular, $\OptValFunc\eqref{eq:QLPSearch} = 1 / 2^{\PXQSATNVariables{}_1}$.
\end{enumerate}
\end{lemma}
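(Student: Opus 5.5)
We reduce everything to Lemma~\ref{lemma:ValueOfQLP}, which describes what the lower levels do once $\PXQSATTruthAssignment{}_1$ is fixed. For each $\PXQSATTruthAssignment{}_1 \in [0,1]^{\PXQSATNVariables{}_1}$, let $\varphi(\PXQSATTruthAssignment{}_1)$ be the best objective value \eqref{eq:QLPSearch} can reach with that first-level choice. Under the optimistic assumption, this is the infimum of $\theta^1_k(\PXQSATObjVariable) + 2\PXQSATAuxVarPen{}_1$ over $\OptSolSet\EqrefNewQLPOriginal{k}{2}{(\PXQSATTruthAssignment{}_1, \PXQSATQBF)}$, minus $\sum_i \PXQSATTruthAssignment{}_{1i}/2^i$. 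The first-player variables are bounded in $[0,1]$ because condition~\ref{Condition2} holds, so no other first-level choices exist.

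\emph{Step 1: binary $\PXQSATTruthAssignment{}_1$.} By Lemma~\ref{lemma:ValueOfQLP}(i)--(ii), the set $\OptSolSet\EqrefNewQLPOriginal{k}{2}{(\PXQSATTruthAssignment{}_1,\PXQSATQBF)}$ is nonempty, and every point in it has $\theta^1_k(\PXQSATObjVariable) = 0$ if $\PXQSATQBF(\PXQSATTruthAssignment{}_1)$ is true and $=1$ otherwise. By Lemma~\ref{lemma:ValueOfQLP}(iv), applied with $l=1$, any optimal lower-level response can be modified to have $\PXQSATAuxVarPen{}_1 = 0$ and still be optimal. (Here I take (iv) to carry over from the $(k-1)$-th player's optimal set to the second player's, because the players between them ignore $\PXQSATAuxVarPen{}_1$.) The first player strictly prefers $\PXQSATAuxVarPen{}_1=0$. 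Hence $\varphi(\PXQSATTruthAssignment{}_1) = [\PXQSATQBF(\PXQSATTruthAssignment{}_1)\text{ false}] - \sum_i \PXQSATTruthAssignment{}_{1i}/2^i$, and this value is attained exactly by responses with $\PXQSATAuxVarPen{}_1 = 0$.

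\emph{Step 2: fractional $\PXQSATTruthAssignment{}_1$.} By Lemma~\ref{lemma:ValueOfQLP}(iii), every feasible point has $\PXQSATAuxVarPen{}_1 = 1$. This again relies on lower-level optimality propagating (iii) up to the second player's optimal set. Since $\theta^1_k(\PXQSATObjVariable) \ge 0$ (we have $\PXQSATObjVariable\in[0,1]$ from the bounds) and $\sum_i \PXQSATTruthAssignment{}_{1i}/2^i < 1$, the objective value is strictly greater than $2 - 1 = 1$. Every binary choice gives a value of at most $1$, so fractional choices are never optimal. The strict bound also prevents the infimum from being approached through fractional points, which is the main danger: compare the non-attainment behaviour in \EqrefExampleOne{4}{1}{}.

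\emph{Step 3: conclude.} If $\PXQSATQBF$ is true, every binary $\PXQSATTruthAssignment{}_1$ with $\PXQSATQBF(\PXQSATTruthAssignment{}_1)$ true gives a value in $(-1,0]$. Every binary $\PXQSATTruthAssignment{}_1$ with $\PXQSATQBF(\PXQSATTruthAssignment{}_1)$ false gives a value of at least $1 - (1 - 2^{-\PXQSATNVariables{}_1}) > 0$. So the minimum is taken over satisfying binary vectors. On binary vectors, the weights $2^{-i}$ make $-\sum_i \PXQSATTruthAssignment{}_{1i}/2^i$ strictly decreasing in lexicographic order, so the unique minimizer is $\PXQSATQBFLexMaxSolution(\PXQSATQBF)$. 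This gives the stated value and shows $\theta^1_k(\PXQSATObjVariable)=0$ and $\PXQSATAuxVarPen{}_1=0$ at every optimum. If $\PXQSATQBF$ is not true, every binary vector gives the value $1 - \sum_i \PXQSATTruthAssignment{}_{1i}/2^i$. This is minimized uniquely at $\OneVector$, with value $2^{-\PXQSATNVariables{}_1}$, and again $\theta^1_k=1$ and $\PXQSATAuxVarPen{}_1=0$. In both cases a minimizer exists, since the minimum is over a finite set and Step~1 shows each of its values is attained.

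The main obstacle is Step~2's reliance on Lemma~\ref{lemma:ValueOfQLP}(iii)--(iv), stated for the $(k-1)$-th player's optimal set, holding for the second player's optimal set. I would argue this by downward induction on the levels. The objectives $\theta^l_k(\PXQSATObjVariable)+2\PXQSATAuxVarPen{}_l$ of players $2,\dots,k-2$ do not involve $\PXQSATAuxVarPen{}_1$. So at each level the optimal set is a subset of the next level's optimal set, which gives the property in (iii). It is also closed under resetting $\PXQSATAuxVarPen{}_1$ to $0$, since doing so changes neither feasibility nor these players' objectives, which gives the property in (iv).
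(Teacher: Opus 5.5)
Your proposal is correct and takes essentially the same route as the paper's outlined proof. Both use Lemma~\ref{lemma:ValueOfQLP} to show that fractional choices of $s_1$, and (when $H$ is true) binary choices with $H(s_1)$ not true, are strictly suboptimal, and then minimize $-\sum_i s_{1i}/2^i$ over the remaining binary vectors. Your downward induction lifting parts (iii)--(iv) of that lemma from the $(k-1)$-th player's optimal set to the second player's, and your attainment argument over the finite set of binary choices, spell out details the paper's outline leaves implicit.
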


\begin{proof}
The proofs are similar to those of Lemma~\ref{lemma:SolutionIsLexicographicallyMaximum}. Therefore, we only provide an outline of that for assertion~(i).
By Lemma~\ref{lemma:ValueOfQLP}, we have $\OptValFunc\eqref{eq:QLPSearch} \le \OptValFunc\EqrefNewQLPOriginal{k}{1}{(\PXQSATQBF)} = \PXQSATYesObjVal{}$.
It follows that any feasible solution such that $\PXQSATTruthAssignment{}_1$ is fractional or $\PXQSATQBF(\PXQSATTruthAssignment{}_1)$ \PXQSATIsNotTrue{} is suboptimal.
Thus, for any optimal solution, we have that $\PXQSATTruthAssignment{}_1$ is a binary vector such that $\PXQSATQBF(\PXQSATTruthAssignment{}_1)$ \PXQSATIsTrue{}, $\theta_k^1(\PXQSATObjVariable) = \PXQSATYesObjVal{}$ and $\PXQSATAuxVarPen{}_1 = 0$.
Therefore,
$$
\OptValFunc\eqref{eq:QLPSearch} = \min_{\PXQSATTruthAssignment{}_1} \left\{ - \sum_{i = 1}^{n_1} \frac{\PXQSATTruthAssignment{}_{1 i}}{ 2^{i} } : \PXQSATTruthAssignment{}_1 \in \{0, 1\}^{n_1} \text{ such that $\PXQSATQBF(\PXQSATTruthAssignment{}_1)$ \PXQSATIsTrue{}} \right\}.
$$
Now, the claim follows immediately.
\end{proof}

Next, we prove that \SearchProblemObj{k} is \ClassFDeltaP{k}-hard for $k \ge 3$.

\begin{proof}[Proof of Theorem~\ref{theorem:search-obj-hardness-general}]
We need only show that \DecisionProblemModifierNoLinkBoxedPoly{\SearchProblemObj{k}} is \ClassDeltaP{k}-hard.
The proof is similar to that of Theorem~\ref{theorem:search-obj-hardness-bilevel}, but uses Lemma~\ref{lemma:theorem:search-obj-hardness-general} in place of Lemma~\ref{lemma:SolutionIsLexicographicallyMaximum}.
Note that \EqrefNewQLPSearch{k}{1}{(\PXQSATQBF)} has fractional coefficients, but one can obtain an equivalent rational \DecisionProblemModifierNoLinkBoxedPoly{\KLPInstances{k}} instance as in the proof of Theorem~\ref{theorem:search-obj-hardness-bilevel}.
\end{proof}

\subsection{Attainability Problems}\label{sec:HardnessProofs:AttainabilityProblems}

In this section, we study the hardness of \DecisionProblemAttain{k}.
We begin with special cases in \KLPInstances{2} and \KLPInstances{3}.

\begin{theorem}
\label{theorem:hardness:attainability:bilevel}
\mbox{}
\begin{enumerate}
\item
The decision problem \DecisionProblemModifierPoly{\DecisionProblemAttain{2}} is \ClassDP{}-hard.
\item
The decision problem \DecisionProblemModifierBoxedPoly{\DecisionProblemAttain{2}} is \ClassNP{}-hard.
\item
The decision problem \DecisionProblemModifierNoLinkPoly{\DecisionProblemAttain{2}} is \ClassCoNP{}-hard.
\item
The decision problem \DecisionProblemModifierNoLinkPoly{\DecisionProblemAttain{3}} is \ClassPiP{2}-hard.
\end{enumerate}
\end{theorem}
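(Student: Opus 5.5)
The four assertions share one principle. By Theorem~\ref{theorem:closedness-of-blp-solution-set} and Corollary~\ref{cor:ClosednessOfTrilevelLPFeasibleSet}, both \KLPInstances{2} and \DecisionProblemModifierNoLink{\KLPInstances{3}} have closed feasible sets. So in these classes an instance has an optimal solution if and only if it is feasible and not unbounded. This equivalence is what the membership proofs of Theorem~\ref{theorem:membership} use. For hardness I would therefore build instances whose feasibility is controlled by one formula and whose boundedness is controlled by another, using independent blocks. Two instances with disjoint variables and no shared constraints can be merged by placing the variables of each player side by side and summing the objectives. The merged instance is feasible iff both blocks are feasible. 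If both blocks are feasible, it is unbounded iff at least one block is unbounded. Hence, if each block lies in a class with closed feasible sets, the merged instance attains its value iff both blocks are feasible and neither is unbounded. Merging preserves conditions~\ref{Condition1}--\ref{Condition3}, except that the magnitude bound may need padding with dummy variables.

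\textbf{Assertion (ii).} Given $\PXSATBooleanFormula$, take \EqrefSATBLP{2}{1}{(\PXSATBooleanFormula)}, which is boxed. Add to the leader the linking constraint $\PXSATObjective{} \le 0$. By Lemma~\ref{lemma:SolutionOfSATBLP}, the optimal value of the original instance is $0$ if $\PXSATBooleanFormula$ is satisfiable and $1$ otherwise. So the new instance is feasible iff $\PXSATBooleanFormula$ is satisfiable. Since \DecisionProblemModifierBoxed{\DecisionProblemAttain{2}} coincides with \DecisionProblemModifierBoxed{\DecisionProblemFeas{2}} on this class, \ClassNP{}-hardness follows. Coefficients stay polynomial after adding dummy variables.

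\textbf{Assertions (iii) and (iv).} I would reduce the complement of \DecisionProblemUnb{k} for $k=2,3$ to \DecisionProblemAttain{k}. By the cited theorem, \DecisionProblemModifierNoLinkPoly{\DecisionProblemUnb{k}} is \ClassSigmaP{k-1}-hard. If the instances produced by that reduction are always feasible, attainability coincides with non-unboundedness. That gives \ClassCoNP{}-hardness for $k=2$ and \ClassPiP{2}-hardness for $k=3$.

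\textbf{Assertion (i).} Reduce \SATUNSAT{}, which is \ClassDP{}-complete: given $(F_1,F_2)$, decide whether $F_1$ is satisfiable and $F_2$ is not. Merge the block from assertion~(ii) for $F_1$ (feasible iff $F_1$ is satisfiable, and bounded because it is boxed) with the always-feasible unboundedness block for $F_2$ (unbounded iff $F_2$ is satisfiable). By the principle above, the merged instance has an optimal solution iff $F_1$ is satisfiable and $F_2$ is unsatisfiable.

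\textbf{Main obstacle.} The main work is securing \emph{always-feasible} unboundedness gadgets without linking constraints and with polynomially bounded coefficients. My first step is to check that the reductions of \cite{Sugishita2026,RodriguesEtAl2024} already give this. They build on Jeroslow's instances \EqrefNewQLPOriginal{k}{1}{(\PXQSATQBF)}, which are feasible for every binary $\PXQSATTruthAssignment{}_1$. If feasibility is not guaranteed there, I would construct the gadget directly. Start from \EqrefNewQLPOriginal{k}{1}{(\PXQSATQBF)} with values $0/1$, and add a leader variable $z \ge 0$ whose contribution can decrease without bound only when the lower levels certify $\theta^1_k(\PXQSATObjVariable)=0$ and $\PXQSATAuxVarPen{}_1=0$. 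To do this, I would add a bottom-level variable that tracks $z$ and is reset to zero whenever the certified quantity is positive. The delicate part is to keep this coupling linear in the lowest player's constraints while keeping every other player indifferent to it. I would verify it first for $k=2$ and then lift it to $k=3$ through the penalty structure of Lemma~\ref{lemma:ValueOfQLP}.
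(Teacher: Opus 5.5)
Your architecture matches the paper's. For (i), the paper also merges two blocks. The first is a block for $F^1$ with a linking constraint ``objective $\le -1$'', so it is feasible iff $F^1$ is satisfiable. The second is a linking-free block for $F^2$ that is always feasible and unbounded iff $F^2$ is satisfiable. Your (ii) is the intended argument.

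The gap is that this second block, on which (i), (iii) and (iv) all rest, is never constructed. The result you cite for \DecisionProblemUnb{k} only asserts hardness; it says nothing about the produced instances being feasible. So ``if the instances produced by that reduction are always feasible'' is an unverified hypothesis. Your fallback is not a construction either. A bottom-level variable that ``tracks $z$ and is reset to zero whenever the certified quantity is positive'' needs the product of an unbounded leader variable with a $0/1$ quantity. The natural linear encodings, e.g.\ $w \le z$ and $w \le M(1-\theta)$ with fixed $M$, cap $w$ at $M$ and destroy the unboundedness you need. For $k=2$ you could sidestep this with the polynomial-time feasibility test of Theorem~\ref{theorem:membership}(i), at the cost of a reduction that is not obviously logspace. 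For $k=3$, however, feasibility of linking-free instances is \ClassCoNP{}-complete, so (iv) genuinely needs an always-feasible gadget. As written, (i), (iii) and (iv) are not proved.

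The missing idea is Lemma~\ref{lemma:homogeneous-klp-without-linking-constraints}, stated just before the proof: for a linking-free instance and $\lambda>0$, $\FeasSolSet\EqrefGeneralKLP{k}{1}{(A,\lambda b,c)}=\lambda\,\FeasSolSet\EqrefGeneralKLP{k}{1}{(A,b,c)}$. The construction goes as follows.
\begin{enumerate}
\item Start from a linking-free instance, obtained from \EqrefSATBLP{2}{1}{(\PXSATBooleanFormula)} or \EqrefNewQLPOriginal{3}{1}{(\PXQSATQBF)}, with a linear objective; absorb any constant into a variable fixed to $1$ by the constraints. Its optimal value is $-1$ if the formula (or QBF) is true and $0$ otherwise. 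It is feasible for every input, and its value is attained in both cases.
\item Give the leader a new variable $\lambda\ge 1$ and replace $b$ by $\lambda b$. The constraints $Ax-\lambda b\ge 0$ and $\lambda\ge 1$ are linear with the same coefficients. They can be moved into the last player's problem by Proposition~\ref{prop:constraint-forwarding}.
\item The feasible points are now exactly $(\lambda x,\lambda)$ with $\lambda\ge 1$ and $x$ feasible for the original instance, and their value is $\lambda\,c^{\top}x$.
\end{enumerate}
Hence the instance is always feasible. It is unbounded (values $-\lambda$) when the formula is true. Otherwise all values are $\ge 0$, and $0$ is attained at $\lambda=1$.

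Scaling the whole lower-level system is the linear way to let the certified quantity grow with a leader variable: the truth variable ranges over $\{0,\lambda\}$ instead of $\{0,1\}$. With $k=2$ this gives (iii). With $k=3$ and $\PXQSATQBF\in\PXQSATQBFSet{2}$ it gives (iv). Used as the second block of your merge, it gives (i), which is exactly the instance in the paper's proof.
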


We will use the following lemma, whose proof is given in~\MyCite{Sugishita2026}.

\begin{lemma}
\label{lemma:homogeneous-klp-without-linking-constraints}
Let $(A, b, c)$ be a rational \DecisionProblemModifierNoLink{\KLPInstances{k}} instance~\eqref{LabelGeneralKLP}.
Then, for any $\lambda \in \mathbb{R}$ such that $\lambda > 0$,
$\mathcal{F}\EqrefGeneralKLP{k}{1}{(A, \lambda b, c)} = \lambda \mathcal{F}\eqref{LabelGeneralKLP}$.
\end{lemma}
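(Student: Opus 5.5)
The statement to prove is Lemma~\ref{lemma:homogeneous-klp-without-linking-constraints}: for a rational \DecisionProblemModifierNoLink{\KLPInstances{k}} instance and any $\lambda>0$, scaling $b$ by $\lambda$ scales the feasible set by $\lambda$. The plan is a backward induction over the levels. The claim to carry is that, for each $l=k,k-1,\ldots,2$ and each $(x_1,\ldots,x_{l-1})$,
\[
\OptSolSet\bigl(\EqtagGeneralKLP{k}{l}{(\lambda x_1,\ldots,\lambda x_{l-1},A,\lambda b,c)}\bigr)=\lambda\,\OptSolSet\bigl(\EqtagGeneralKLP{k}{l}{(x_1,\ldots,x_{l-1},A,b,c)}\bigr).
\]
The same scaling should hold for the feasible sets of the $l$-th player's problem. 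Under condition~\ref{Condition1}, only the $k$-th player has constraints, namely $\sum_i A_{ki}x_i\ge b_k$, and every objective $\sum_{i\ge l}c_{li}^\top x_i$ is linear and homogeneous.

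\textbf{Base case $l=k$.} The map $x_k\mapsto \lambda x_k$ is a bijection between $\{x_k:A_{kk}x_k\ge b_k-\sum_{i<k}A_{ki}x_i\}$ and the corresponding set with $\lambda b_k$ and $\lambda x_1,\ldots,\lambda x_{k-1}$. This holds because multiplying an inequality by $\lambda>0$ preserves it. Since the objective scales by the positive factor $\lambda$, minimizers correspond to minimizers. Infeasibility and unboundedness, which give an empty solution set, are also preserved.

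\textbf{Inductive step.} Assume the claim for level $l+1$. The $l$-th player's feasible set with data $(\lambda x_1,\ldots,\lambda x_{l-1},\lambda b)$ consists of $(x_l,\ldots,x_k)$ with $(x_{l+1},\ldots,x_k)\in\OptSolSet$ of level $l+1$ at $(\lambda x_1,\ldots,\lambda x_{l-1},x_l,\lambda b)$. Writing $x_l=\lambda y_l$ and applying the inductive hypothesis, this set is exactly $\lambda$ times the original feasible set. Because there are no level-$l$ constraints, there is nothing else to check; this is where the no-linking assumption is used. The homogeneous linear objective is multiplied by $\lambda>0$, so the infimum scales by $\lambda$, including the cases $\pm\infty$. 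Attainment corresponds under the bijection, so the optimal sets scale by $\lambda$.

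Finally, apply the same argument at level $1$, which has no constraints either, to get $\FeasSolSet\EqrefGeneralKLP{k}{1}{(A,\lambda b,c)}=\lambda\FeasSolSet\eqref{LabelGeneralKLP}$. There is no real obstacle. The only care needed is to state the inductive claim for all upper-level parameter values simultaneously, and to note that the scaling bijection preserves emptiness, unboundedness and non-attainment as well as optimizers.
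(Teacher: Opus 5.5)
Your proof is correct. Propagating the homogeneity $\mathcal{S}\bigl(\mathrm{LP}_k^l(\lambda x_1,\ldots,\lambda x_{l-1},A,\lambda b,c)\bigr)=\lambda\,\mathcal{S}\bigl(\mathrm{LP}_k^l(x_1,\ldots,x_{l-1},A,b,c)\bigr)$ from $l=k$ down to $l=2$, and then reading off the level-$1$ feasible set, is exactly the argument needed. The paper gives no proof of its own here and defers to \cite{Sugishita2026}, so your self-contained scaling induction fills that in.

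One small correction to your commentary: the no-linking hypothesis is not where the argument really lives. Any constraint $\sum_{i=1}^k A_{li}x_i\ge b_l$, at any level and linking or not, becomes $\lambda\bigl(\sum_{i=1}^k A_{li}y_i-b_l\bigr)\ge 0$ under $x=\lambda y$ with $b$ replaced by $\lambda b$. Condition~\ref{Condition1} therefore only saves you a line; it is not essential to the lemma.
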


\begin{proof}[Proof of Theorem~\ref{theorem:hardness:attainability:bilevel}]
Since the proofs are similar, we only prove assertion~(i).

We use the following language called \SATUNSAT{}: A pair of Boolean formulae $(\PXSATBooleanFormula^1, \PXSATBooleanFormula^2) \in \PXSATBFSet{} \times \PXSATBFSet{}$ is in \SATUNSAT{} if and only if $\PXSATBooleanFormula^1$ is satisfiable and $\PXSATBooleanFormula^2$ is unsatisfiable.
It is shown by \MyCitet{Papadimitriou and Yannakakis}{papadimitriou1982complexity} that \SATUNSAT{} is \ClassDP{}-complete.

For each $i = 1, 2$, let $\PXSATBooleanFormula^i$ be a Boolean formula in $\PXSATBFSet{}$, and let $(A^i, b^i, c^i)$ be a \DecisionProblemModifierNoLinkBoxedPoly{\KLPInstances{2}} instance whose optimal objective value is $-1$ if $\PXSATBooleanFormula^i$ is satisfiable and $0$ otherwise (one can obtain this instance modifying \EqrefSATBLP{2}{1}{(\PXSATBooleanFormula^i)} appropriately).
Consider
\begin{align*}
\min_{\substack{x_1^1, x_2^1,\\x_1^2, x_2^2, s^2}}
\left\{
c_{1 1}^2 x_1^2 + c_{1 2}^2 x_2^2
:
\begin{array}{l}
c_{1 1}^1 x_1^1 + c_{1 2}^1 x_2^1 \le -1,
\\
x_2^1 \in \OptSolSet\EqrefGeneralKLP{2}{2}{(x_1^1, A^1, b^1, c^1)},
\\
x_2^2 \in \OptSolSet\EqrefGeneralKLP{2}{2}{(x_1^2, A^2, s^2 b^2, c^2)}
\end{array}
\right\}.
\end{align*}
By Lemmata~\ref{lemma:SolutionOfSATBLP} and~\ref{lemma:homogeneous-klp-without-linking-constraints}, this has an optimal solution if and only if $\PXSATBooleanFormula^1$ is satisfiable and $\PXSATBooleanFormula^2$ is not satisfiable.
We can rewrite it as an equivalent rational \DecisionProblemModifierPoly{\KLPInstances{2}} instance.
Thus, \SATUNSAT{} $\le_l$ \DecisionProblemModifierPoly{\DecisionProblemAttain{2}}.
\end{proof}

The next results concern \KLPInstances{k} with $k \ge 3$.

\begin{theorem}
\label{theorem:hardness:attainability}
The following statements hold.
\begin{enumerate}
\item
For $k \ge 3$, the decision problem \DecisionProblemModifierBoxedPoly{\DecisionProblemAttain{k}} is $\ClassDeltaP{k}$-hard.
\item
For $k \ge 4$, the decision problem \DecisionProblemModifierNoLinkBoxedPoly{\DecisionProblemAttain{k}} is $\ClassDeltaP{k}$-hard.
\end{enumerate}
\end{theorem}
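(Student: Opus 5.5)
Reduce \DecisionProblemLastBitQSAT{k-1} (which is \ClassDeltaP{k-1}-complete) — wait, we need \ClassDeltaP{k}-hardness, so reduce from \DecisionProblemLastBitQSAT{k}? Let me think again. We have a $k$-level LP and want $\Delta^p_k$-hardness, so the natural source is \DecisionProblemLastBitQSAT{k-1} only gives $\Delta^p_{k-1}$. Instead, the plan is to use the search construction \EqrefNewQLPSearch{k}{1}{(\PXQSATQBF)} for $\PXQSATQBF\in\PXQSATQBFSet{k-1}$, which encodes $\PXQSATQBFLexMaxSolution(\PXQSATQBF)$ (an \ClassFDeltaP{k}-hard quantity) in its optimal value. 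Reduce \DecisionProblemLastBitQSAT{k-1}-type problems at level $k$: the language $\{H : H \text{ true and last bit of } M(H)=1\}$ for $H\in\PXQSATQBFSet{k-1}$ is \ClassDeltaP{k}-complete by Krentel (the lex-max witness for a $\Sigma^p_{k-1}$ predicate). So the source is \DecisionProblemLastBitQSAT{k-1} interpreted as \ClassDeltaP{k}-complete, consistent with \SearchProblemQThreeSAT{k-1} being used for \ClassFDeltaP{k}-hardness in Theorem~\ref{theorem:search-obj-hardness-general}.

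The construction will graft the non-attainment gadget of Section~\ref{subsec:motivational_example} onto the last bit. I will modify the first player of \EqrefNewQLPSearch{k}{1}{(\PXQSATQBF)} so that, when the last component $\PXQSATTruthAssignment{}_{1\PXQSATNVariables{}_1}$ of an optimal $\PXQSATTruthAssignment{}_1$ equals $0$, the optimum is approached but not attained, whereas when it equals $1$ the optimum is attained. Concretely, add a new first-player variable $y\in[0,1]$ and an extra objective term, with the lower levels containing a copy of \EqrefExampleTwo{3}{1}{} (for $k=3$, with linking constraints allowed) or of \EqrefExampleOne{4}{1}{} (for $k\ge4$, no linking, padded with dummy levels), in which $x_1$ of the example is replaced by $y$, and the bonus $-y$ is coupled to the last bit through a constraint of the form $y\le 1-\PXQSATTruthAssignment{}_{1\PXQSATNVariables{}_1}+\epsilon$-free variant: namely, the gadget's jump term is activated only if $\PXQSATTruthAssignment{}_{1\PXQSATNVariables{}_1}=0$ (e.g. the example's penalty $x_3$ is replaced by $\max$-style $x_3-\PXQSATTruthAssignment{}_{1\PXQSATNVariables{}_1}$ realized through an extra lower-level variable). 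Scale the gadget's contribution by $2^{-(\PXQSATNVariables{}_1+2)}$ so it never changes which $\PXQSATTruthAssignment{}_1$ is lexicographically chosen (Lemma~\ref{lemma:theorem:search-obj-hardness-general}), and remove exponentially small coefficients via the doubling chains used for \EqrefCompactLEXSATBLP{2}{1}{(\PXSATBooleanFormula)}. The ``not true'' case yields $\PXQSATTruthAssignment{}_1=\OneVector$ so the gadget is inactive; to make the answer ``no'' there, I instead activate the gadget when $\theta^1_k(\PXQSATObjVariable)=1$ too, or simply append a separate always-nonattaining gadget gated on $\theta^1_k$.

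Key steps: (1) verify that the gadget's lower levels are independent of the QBF levels so Lemma~\ref{lemma:ValueOfQLP} still applies, and that conditions~\ref{Condition1}--\ref{Condition3} (via Proposition~\ref{prop:constraint-forwarding}) hold for $k\ge4$, while for $k=3$ only \ref{Condition2}, \ref{Condition3}; (2) show the optimal value is unchanged up to the gadget's infimum and that any near-optimal solution has $\PXQSATTruthAssignment{}_1=\PXQSATQBFLexMaxSolution(\PXQSATQBF)$; (3) conclude attainment iff the gadget is inactive iff last bit is $1$. The main obstacle is step (1)–(2) for the gating: the gadget must interact with the first player's bit only through the first player's own feasible region (no linking), so I would gate by letting $y$ be bounded by $1-\PXQSATTruthAssignment{}_{1\PXQSATNVariables{}_1}$ as a non-linking first-level constraint and show that when the bit is $1$, $y=0$ is forced and the gadget is attained trivially.
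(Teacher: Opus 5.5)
Your reduction is correct in outline but builds the gate differently from the paper; your choice of source, $(k-1)$-BITSAT for $H\in\mathcal{B}_{k-1}$, is the same as the paper's.

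\textbf{How the two gates differ.} The paper adds no separate gadget term. It replaces the last-bit term $-s_{1n_1}/2^{n_1}$ of the first player's objective by $6(y-\tfrac12)/2^{n_1}$. Here $y$ is a last-level variable that no follower cares about, subject to $6(y-\tfrac12)\ge -s_{1n_1}$ and $6(y-\tfrac12)\ge x_2-x_1-1+t$, where $x_1,x_2$ are the gadget's variables. The optimistic first player gets the smallest feasible $y$, so this term equals $\max\{-s_{1n_1},\,x_2-x_1-1+t\}/2^{n_1}$. The gadget's jump is therefore active exactly when $s_{1n_1}=1$ \emph{and} $t$ certifies $H(s_1)$. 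Non-attainment then holds iff $H\in\DecisionProblemLastBitQSAT{(k-1)}$ directly, and the optimal value is unchanged. The cost is arguing that these joining constraints never bind for the intermediate players.

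You instead feed the gadget a first-player variable $y$, gate it by the non-linking first-level constraint $y\le 1-s_{1n_1}$, and add the gadget's value scaled by $2^{-(n_1+2)}$. Given the first player's choice, every lower level is an exact product of the QBF part and the gadget part. So Lemma~\ref{lemma:ValueOfQLP} applies verbatim and Proposition~\ref{prop:constraint-forwarding} gives \ref{Condition1}. Competing choices of $s_1$ are separated by at least $2^{-n_1}$ before the gadget, while the gadget shifts values by less than $2^{-(n_1+2)}$. Hence the optimum is attained iff the winning $s_1$ has last bit $1$. Padding the 4-level gadget with indifferent levels and putting the doubling chains at the last level also work as you say.

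\textbf{The loose end: $H$ false.} Then the winner is $s_1=\mathbf{1}$, whose last bit is $1$, so your instance \emph{attains} its optimum. What your construction decides is therefore: no optimal solution iff $H$ is true and $M(H)_{n_1}=0$. Your proposed remedy, also gating on $\theta^1_k(t)$, cannot be done by a first-level constraint, because $t$ is chosen at level $k-1$. A last-level constraint tying $y$ directly to $t$ would make some followers' choices infeasible and so change the quantifier semantics behind Lemma~\ref{lemma:ValueOfQLP}. Avoiding that requires a last-level slack that always absorbs the constraint, which is exactly the paper's device.

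The cheap fix is on the formula side. Append a fresh variable $z$ as the last variable of the first block of $H$, and conjoin $z\leftrightarrow\neg\mu_{1n_1}$ to the matrix, where $\mu_{1n_1}$ is the last variable of $\mu_1$. The resulting $H'$ is true iff $H$ is, and $M(H')=(M(H),\,1-M(H)_{n_1})$. So your instance built from $H'$ has no optimal solution iff $H\in\DecisionProblemLastBitQSAT{(k-1)}$. Closure of $\Delta^p_k$ under complement then finishes the argument, as in the paper.
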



\begin{proof}
(i)
We prove for the case $k = 3$.
It is straightforward to extend the proof for $k \ge 4$.

Let $\PXQSATQBF \in \PXQSATQBFSet{2}$ and define the following rational \DecisionProblemModifierBoxed{\KLPInstances{3}} instance, which is obtained by combining \eqref{eq:QLPSearch} and \EqrefExampleTwo{3}{1}{}:
\begin{align}
\inf_{\substack{
\PXQSATTruthAssignment{}_1, \PXQSATTruthAssignment{}_2, \PXQSATObjVariable, \PXQSATAuxVarOne, \PXQSATAuxVarPen, \PXQSATAuxVarTwo\\\PXQSATX{}_1, \PXQSATX{}_2, \PXQSATX{}_3, \PXQSATY{}
}}
\left\{
1-\PXQSATObjVariable
+
2 \PXQSATAuxVarPen{}_1
-
\sum_{i = 1}^{n_1 - 1} \PXQSATTruthAssignment{}_{1 i} / 2^{i}
+
6(\PXQSATY{} - 1/2) / 2^{n_1} :
\begin{array}{l}
\ZeroVector \le \PXQSATTruthAssignment{}_1, \PXQSATX{}_1 \le \OneVector, \\
(\PXQSATTruthAssignment{}_2, \PXQSATObjVariable, \PXQSATAuxVarOne, \PXQSATAuxVarPen, \PXQSATAuxVarTwo, \PXQSATX{}_2, \PXQSATX{}_3, \PXQSATY{})
\\
\ \ 
\in \OptSolSet\eqref{eq:LabelHardnessOfAttainSecondLevel}
\end{array}
\right\},
\tag{\EqtagHardnessOfAttain{3}{1}{(\PXQSATQBF)}}
\label{eq:LabelHardnessOfAttain}
\end{align}
\begin{align}
\inf_{\substack{
\PXQSATTruthAssignment{}_2, \PXQSATObjVariable, \PXQSATAuxVarOne, \PXQSATAuxVarPen, \PXQSATAuxVarTwo\\\PXQSATX{}_2, \PXQSATX{}_3, \PXQSATY{}
}}
\left\{
\PXQSATObjVariable
+ (\PXQSATCostCoefOne^{\PXQSATQBF})^{\top} \PXQSATAuxVarTwo
- \PXQSATX{}_2
:
\begin{array}{l}
\PXQSATX{}_2 \le \PXQSATX{}_1, \PXQSATX{}_3 = 0,
\ZeroVector \le \PXQSATTruthAssignment{}_2, \PXQSATObjVariable, \PXQSATAuxVarOne, \PXQSATX{}_2 \le \OneVector,
\\
(\PXQSATAuxVarPen, \PXQSATAuxVarTwo, \PXQSATX{}_3, \PXQSATY{})
\in \OptSolSet\eqref{eq:LabelHardnessOfAttainThirdLevel}
\end{array}
\right\},
\tag{\EqtagHardnessOfAttain{3}{2}{(\PXQSATTruthAssignment{}_1, \PXQSATX{}_1, \PXQSATQBF)}}
\label{eq:LabelHardnessOfAttainSecondLevel}
\end{align}
\begin{align}
\inf_{\substack{\PXQSATAuxVarPen, \PXQSATAuxVarTwo, \PXQSATX{}_3,\PXQSATY{} }}
\left\{
(\PXQSATCostCoefTwo^{\PXQSATQBF})^{\top} \PXQSATAuxVarTwo
-
\PXQSATX{}_3
:
\begin{array}{l}
\PXQSATQBFConstraintCoef^{\PXQSATQBF}_{1} \PXQSATTruthAssignment{}_1
+ \PXQSATQBFConstraintCoef^{\PXQSATQBF}_{2} \PXQSATTruthAssignment{}_2
+ \PXQSATQBFConstraintCoef^{\PXQSATQBF}_{3} \PXQSATObjVariable
+ \PXQSATQBFConstraintCoef^{\PXQSATQBF}_{4} \PXQSATAuxVarOne
+ \PXQSATQBFConstraintCoef^{\PXQSATQBF}_{5} \PXQSATAuxVarPen
+ \PXQSATQBFConstraintCoef^{\PXQSATQBF}_{6} \PXQSATAuxVarTwo \ge \PXQSATQBFConstraintRHS^{\PXQSATQBF},
\\
\PXQSATX{}_3 \le \PXQSATX{}_2,
\\
\PXQSATX{}_3 \le 1 - \PXQSATX{}_2,
\\
6 (\PXQSATY{} - 1/2) \ge
-\PXQSATTruthAssignment{}_{1 n_1},
\\
6 (\PXQSATY{} - 1/2) \ge
\PXQSATX{}_2 - \PXQSATX{}_1
-1 + \PXQSATObjVariable,
\\
\ZeroVector \le \PXQSATAuxVarPen, \PXQSATAuxVarTwo, \PXQSATX{}_3, \PXQSATY{} \le \OneVector
\end{array}
\right\}.
\tag{\EqtagHardnessOfAttain{3}{3}{(\PXQSATTruthAssignment{}_1, \PXQSATTruthAssignment{}_2, \PXQSATObjVariable, \PXQSATAuxVarOne, \PXQSATX{}_1, \PXQSATX{}_2, \PXQSATQBF)}}
\label{eq:LabelHardnessOfAttainThirdLevel}
\end{align}
Below, we show that it does not have an optimal solution if and only if $\PXQSATQBF$ \PXQSATIsTrue{} and the last component of $\PXQSATQBFLexMaxSolution(\PXQSATQBF)$ is $1$.

Variables corresponding to \eqref{eq:QLPSearch} and \EqrefExampleTwo{3}{1}{} are almost decoupled.
The only constraints joining them are
$6 (\PXQSATY{} - 1/2) \ge
-\PXQSATTruthAssignment{}_{1 n_1}$ and $
6 (\PXQSATY{} - 1/2) \ge
\PXQSATX{}_2 - \PXQSATX{}_1
-1 + \PXQSATObjVariable$.
However, these constraints are always inactive in the second and third players' problems (the coefficient $6$ is chosen sufficiently large to ensure this).
Thus, we can show that $\FeasSolSet\eqref{eq:LabelHardnessOfAttain} = \FeasSolSet\eqref{eq:HardnessOfAttainabilityDecoupled}$, where
\begin{align}
\inf_{\substack{
\PXQSATTruthAssignment{}_1, \PXQSATTruthAssignment{}_2, \PXQSATObjVariable, \PXQSATAuxVarOne, \PXQSATAuxVarPen, \PXQSATAuxVarTwo\\\PXQSATX{}_1, \PXQSATX{}_2, \PXQSATX{}_3, \PXQSATY{}
}} \ &
1 - \PXQSATObjVariable
+
2 \PXQSATAuxVarPen{}_1
-
\sum_{i = 1}^{n_1 - 1} \PXQSATTruthAssignment{}_{1 i} / 2^{i}
+
6(\PXQSATY{} - 1/2) / 2^{n_1}
\tag{\EqtagHardnessOfAttainDecoupled{3}{1}{(\PXQSATQBF)}}
\label{eq:HardnessOfAttainabilityDecoupled}
\\
\text{s.t.} \
&
6(\PXQSATY{} - 1/2) \ge - \PXQSATTruthAssignment{}_{1 \PXQSATNVariables{}_1},
\notag
\\
&
6(\PXQSATY{} - 1/2) \ge \PXQSATX{}_2 - \PXQSATX{}_1 - 1 + \PXQSATObjVariable,
\notag
\\
&
\ZeroVector \le \PXQSATTruthAssignment{}_1, \PXQSATX{}_1 \le \OneVector, 
\notag
\\
&
(\PXQSATTruthAssignment{}_2, \PXQSATObjVariable, \PXQSATAuxVarOne, \PXQSATAuxVarPen, \PXQSATAuxVarTwo)
\in \OptSolSet\EqrefNewQLPOriginal{3}{2}{(\PXQSATTruthAssignment{}_1, \PXQSATQBF)},
\notag
\\
&
(\PXQSATX{}_2, \PXQSATX{}_3)
\in \OptSolSet\eqref{LabelExampleTwoSecond}.
\notag
\end{align}
Therefore, we have $\OptValFunc\EqrefHardnessOfAttain{3}{1}{(\PXQSATQBF)} = \OptValFunc\eqref{eq:HardnessOfAttainabilityDecoupled} \allowbreak \ge \OptValFunc\EqrefNewQLPSearch{3}{1}{(\PXQSATQBF)}$ and $\OptSolSet\EqrefHardnessOfAttain{3}{1}{(\PXQSATQBF)} \allowbreak = \OptSolSet\eqref{eq:HardnessOfAttainabilityDecoupled}$.

\paragraph{\textbf{Case 1: $\PXQSATQBF$ \PXQSATIsNotTrue{}}}
By Lemma~\ref{lemma:theorem:search-obj-hardness-general}, we have $\OptValFunc\EqrefNewQLPSearch{3}{1}{(\PXQSATQBF)} = 1 / 2^{\PXQSATNVariables{}_1}$.
Let $(\PXQSATTruthAssignment{}_1, \PXQSATTruthAssignment{}_2, \PXQSATObjVariable, \PXQSATAuxVarOne, \PXQSATAuxVarPen, \PXQSATAuxVarTwo, \PXQSATX{}_1, \PXQSATX{}_2, \PXQSATX{}_3, \PXQSATY{})$ be such that
$(\PXQSATTruthAssignment{}_1, \PXQSATTruthAssignment{}_2, \PXQSATObjVariable, \PXQSATAuxVarOne, \PXQSATAuxVarPen, \PXQSATAuxVarTwo)\in \OptSolSet\EqrefNewQLPSearch{3}{1}{(\PXQSATQBF)}$,
$\PXQSATX{} = \ZeroVector$, and $6(\PXQSATY{} - 1/2) = -1$.
Using Lemma~\ref{lemma:theorem:search-obj-hardness-general} again, we have $\PXQSATTruthAssignment{}_1 = \OneVector$ and $\PXQSATObjVariable = 0$.
It follows that this solution is feasible for \eqref{eq:HardnessOfAttainabilityDecoupled}, and its objective value is $1 / 2^{\PXQSATNVariables{}_1}$, implying its optimality.

\paragraph{\textbf{Case 2: $\PXQSATQBF$ \PXQSATIsTrue{} and the last component of $\PXQSATQBFLexMaxSolution(\PXQSATQBF)$ is $0$}}
By Lemma~\ref{lemma:theorem:search-obj-hardness-general}, we have $\OptValFunc\EqrefNewQLPSearch{3}{1}{(\PXQSATQBF)} = - \sum_{i = 1}^{\PXQSATNVariables{}_1} \PXQSATQBFLexMaxSolution(\PXQSATQBF)_i / 2^{i}$.
Let $(\PXQSATTruthAssignment{}_1, \PXQSATTruthAssignment{}_2, \PXQSATObjVariable, \PXQSATAuxVarOne, \PXQSATAuxVarPen, \PXQSATAuxVarTwo, \PXQSATX{}_1, \PXQSATX{}_2, \PXQSATX{}_3, \PXQSATY{})$ be such that
$(\PXQSATTruthAssignment{}_1, \PXQSATTruthAssignment{}_2, \PXQSATObjVariable, \PXQSATAuxVarOne, \PXQSATAuxVarPen, \PXQSATAuxVarTwo)\in \OptSolSet\EqrefNewQLPSearch{3}{1}{(\PXQSATQBF)}$,
$\PXQSATX{} = \ZeroVector$, and $6(\PXQSATY{} - 1/2) = 0$.
Using Lemma~\ref{lemma:theorem:search-obj-hardness-general} again, we have $\PXQSATTruthAssignment{}_1 = \PXQSATQBFLexMaxSolution(\PXQSATQBF)$ and $\PXQSATObjVariable = 1$.
It follows that this solution is feasible for \eqref{eq:HardnessOfAttainabilityDecoupled}, and its objective value is $- \sum_{i = 1}^{\PXQSATNVariables{}_1} \PXQSATQBFLexMaxSolution(\PXQSATQBF)_i / 2^{i}$, implying its optimality.

\paragraph{\textbf{Case 3: $\PXQSATQBF$ \PXQSATIsTrue{} and the last component of $\PXQSATQBFLexMaxSolution(\PXQSATQBF)$ is $1$}}
By Lemma~\ref{lemma:theorem:search-obj-hardness-general}, we have $\OptValFunc\EqrefNewQLPSearch{3}{1}{(\PXQSATQBF)} = - \sum_{i = 1}^{\PXQSATNVariables{}_1} \PXQSATQBFLexMaxSolution(\PXQSATQBF)_i / 2^{i}$.
Let $\SequenceIndex$ be any positive integer and let $(\PXQSATTruthAssignment{}_1, \PXQSATTruthAssignment{}_2, \PXQSATObjVariable, \PXQSATAuxVarOne, \PXQSATAuxVarPen, \PXQSATAuxVarTwo, \PXQSATX{}_1, \PXQSATX{}_2, \PXQSATX{}_3, \PXQSATY{})$ be such that
$(\PXQSATTruthAssignment{}_1, \PXQSATTruthAssignment{}_2, \PXQSATObjVariable, \PXQSATAuxVarOne, \PXQSATAuxVarPen, \PXQSATAuxVarTwo)\in \OptSolSet\EqrefNewQLPSearch{3}{1}{(\PXQSATQBF)}$,
$\PXQSATX{}_1 = 1 - 1/\SequenceIndex$, $\PXQSATX{}_2 = \PXQSATX{}_3 = 0$, and $6(\PXQSATY{} - 1/2) = -(1 - 1/\SequenceIndex)$.
Using Lemma~\ref{lemma:theorem:search-obj-hardness-general} again, we have $\PXQSATTruthAssignment{}_1 = \PXQSATQBFLexMaxSolution(\PXQSATQBF)$ and $\PXQSATObjVariable = 1$.
It follows that this solution is feasible for \eqref{eq:HardnessOfAttainabilityDecoupled}, and its objective value is $- \sum_{i = 1}^{\PXQSATNVariables{}_1 - 1} \PXQSATQBFLexMaxSolution(\PXQSATQBF)_i / 2^{i} - (1 - 1/\SequenceIndex) / 2^{i}$.
Since $\SequenceIndex$ was an arbitrary positive integer, $\OptValFunc\eqref{eq:HardnessOfAttainabilityDecoupled} = - \sum_{i = 1}^{\PXQSATNVariables{}_1} \PXQSATQBFLexMaxSolution(\PXQSATQBF)_i / 2^{i}$.

However, in the last case, \eqref{eq:HardnessOfAttainabilityDecoupled} does not have an optimal solution.
Let $(\PXQSATTruthAssignment{}_1, \PXQSATTruthAssignment{}_2, \PXQSATObjVariable, \PXQSATAuxVarOne, \PXQSATAuxVarPen, \PXQSATAuxVarTwo, \PXQSATX{}_1, \PXQSATX{}_2, \PXQSATX{}_3, \PXQSATY{})$ be any feasible solution for \eqref{eq:HardnessOfAttainabilityDecoupled}.
If $\PXQSATTruthAssignment{}_1$ is not binary, we have $\PXQSATAuxVarPen{}_1 = 1$ (Lemma~\ref{lemma:ValueOfQLP}), implying that it is suboptimal.
If $\PXQSATTruthAssignment{}_1$ is binary but $\PXQSATQBF(\PXQSATTruthAssignment{}_1)$ \PXQSATIsNotTrue{}, we have $\PXQSATObjVariable = 0$, again implying its suboptimality.
If $\PXQSATTruthAssignment{}_1$ is binary, $\PXQSATQBF(\PXQSATTruthAssignment{}_1)$ \PXQSATIsTrue{} but $\PXQSATTruthAssignment{}_1 \ne \PXQSATQBFLexMaxSolution(\PXQSATQBF)$, since $\PXQSATObjVariable = 1$ and $\PXQSATAuxVarPen{}_1 = 0$, its objective value can be bounded from below as
\begin{align*}
1 - \PXQSATObjVariable
+
2 \PXQSATAuxVarPen{}_1
-
\sum_{i = 1}^{n_1 - 1} \PXQSATTruthAssignment{}_{1 i} / 2^{i}
+
6(\PXQSATY{} - 1/2) / 2^{n_1}
&\ge
- \sum_{i = 1}^{\PXQSATNVariables{}_1} \PXQSATTruthAssignment{}_{1 i} / 2^{i}
\\
&>
- \sum_{i = 1}^{\PXQSATNVariables{}_1} \PXQSATQBFLexMaxSolution(\PXQSATQBF)_i / 2^{i}
\\
&=
\OptValFunc\eqref{eq:HardnessOfAttainabilityDecoupled}.
\end{align*}
If $\PXQSATTruthAssignment{}_1$ is binary and $\PXQSATTruthAssignment{}_1 = \PXQSATQBFLexMaxSolution(\PXQSATQBF)$, we have $\PXQSATObjVariable = 1$, $\PXQSATAuxVarPen{}_1 = 0$ and $\PXQSATX{}_2 - \PXQSATX{}_1 > -1$, thus
\begin{align*}
1 - \PXQSATObjVariable
+
2 \PXQSATAuxVarPen{}_1
-
\sum_{i = 1}^{n_1 - 1} \PXQSATTruthAssignment{}_{1 i} / 2^{i}
+
6(\PXQSATY{} - 1/2) / 2^{n_1}
&\ge
- \sum_{i = 1}^{\PXQSATNVariables{}_1 - 1} \PXQSATTruthAssignment{}_{1 i} / 2^{i}
+
(\PXQSATX{}_2 - \PXQSATX{}_1) / 2^{n_1}
\\
&>
- \sum_{i = 1}^{\PXQSATNVariables{}_1} \PXQSATQBFLexMaxSolution(\PXQSATQBF)_i / 2^{i}
\\
&=
\OptValFunc\eqref{eq:HardnessOfAttainabilityDecoupled}.
\end{align*}
Thus, there is no feasible solution that attains the optimal objective value.

Instance \EqrefHardnessOfAttain{3}{1}{(\PXQSATQBF)} has fractional coefficients of exponentially small magnitude, but one can obtain an equivalent rational \DecisionProblemModifierBoxedPoly{\KLPInstances{3}} instance
as in the proof of Theorem~\ref{theorem:search-obj-hardness-bilevel}.
The resulting instance can be constructed in logarithmic space, implying $\ClassDeltaP{3}$-hardness of \DecisionProblemModifierCompBoxedPoly{\DecisionProblemAttain{3}}.
This implies \DecisionProblemModifierBoxedPoly{\DecisionProblemAttain{3}} is $\ClassDeltaP{3}$-hard, for $\ClassDeltaP{3}$ is closed under complementation.

The proof for assertion~(ii) is similar.
\end{proof}

\subsection{Feasibility Problems}\label{sec:HardnessProofs:FeasibilityProblems}

The goal of this section is to establish the following hardness results in $k$-level LP.

\begin{theorem}
\label{theorem:klp-hardness}
The following statements hold.
\begin{enumerate}
\item
For $k \ge 2$, the decision problem \DecisionProblemModifierBoxedPoly{\DecisionProblemFeas{k}} is \ClassSigmaP{k - 1}-hard.
\item
For $k \ge 3$, the decision problem \DecisionProblemModifierNoLinkPoly{\DecisionProblemFeas{k}} is \ClassPiP{k - 2}-hard.
\item
For $k \ge 5$, the decision problem \DecisionProblemModifierNoLinkBoxedPoly{\DecisionProblemFeas{k}} is \ClassSigmaP{k - 1}-hard.
\end{enumerate}
\end{theorem}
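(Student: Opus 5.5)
For assertion~(i), I would reduce from \DecisionProblemVal{k} restricted to conditions~\ref{Condition1}--\ref{Condition3}, which is \ClassSigmaP{k-1}-hard by the theorem of \cite{Jeroslow1985,Sugishita2026}. More concretely, I would start from Jeroslow's instance \EqrefNewQLPOriginal{k}{1}{(\PXQSATQBF)}, which has optimal value $0$ if $\PXQSATQBF$ is true and $1$ otherwise, and take threshold $t=0$. I would add the first-level linking constraint $\sum_i c_{1i}^\top x_i \le 0$. The resulting instance is feasible if and only if the original has a feasible solution of value at most $0$. That is exactly the \DecisionProblemVal{k} question, so the reduction is the identity plus one constraint. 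The bounds of condition~\ref{Condition2} and the integrality of the data in condition~\ref{Condition3} are preserved, so \DecisionProblemModifierBoxedPoly{\DecisionProblemFeas{k}} is \ClassSigmaP{k-1}-hard.

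For assertion~(ii), I would exploit the homogeneity of Lemma~\ref{lemma:homogeneous-klp-without-linking-constraints} to turn a negative value of a $(k-1)$-level instance into unboundedness of the second player's problem. Let $(A,b,c)$ be a \DecisionProblemModifierNoLinkPoly{\KLPInstances{(k-1)}} instance obtained from Jeroslow's construction for $\PXQSATQBF \in \PXQSATQBFSet{k-2}$. I would shift it so that its value is $-1$ if $\PXQSATQBF$ is true and $0$ otherwise, and so that the value is attained in the latter case by Lemma~\ref{lemma:ValueOfQLP}. The new $k$-level instance has a dummy first player. The second player controls a scalar $s$ together with the old first player's variables, and the subsequent levels are the old ones with right-hand side $s b$. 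The constraint $s \ge 1$ is placed (non-linking) in the last player's problem via Proposition~\ref{prop:constraint-forwarding}.

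By Lemma~\ref{lemma:homogeneous-klp-without-linking-constraints}, for each fixed $s \ge 1$ the second player's attainable values are $s$ times those of the original instance. Consider first the case where $\PXQSATQBF$ is true, so the original value is $-1$. Then the second player's problem is unbounded, has no optimal solution, and the $k$-level instance is infeasible. If instead $\PXQSATQBF$ is false, the infimum $0$ is attained, for instance at $s=1$, so the instance is feasible. Thus feasibility decides the complement of \DecisionProblemQThreeSAT{(k-2)}, giving \ClassPiP{k-2}-hardness. The coefficients stay polynomial because $s b$ is linear in $(s,x)$ with integer data.

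For assertion~(iii), the idea is that without linking constraints and with bounded variables, infeasibility can only arise from non-attainment in some subproblem. The first non-attaining no-linking bounded gadget is the 4-level example \EqrefExampleOne{4}{1}{}. I would let the first player choose $\PXQSATTruthAssignment_1$ for $\PXQSATQBF \in \PXQSATQBFSet{k-1}$. Levels $2,\dots,k$ would then run a combination of a Jeroslow-type evaluation of $\PXQSATQBF(\PXQSATTruthAssignment_1)$ with a copy of the \EqrefExampleOne{4}{1}{} gadget, following the pattern used to prove Theorem~\ref{theorem:hardness:attainability}(ii). The gadget should be arranged so that the second player's problem has an optimal solution precisely when $\PXQSATTruthAssignment_1$ is binary and $\PXQSATQBF(\PXQSATTruthAssignment_1)$ is true.

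To achieve this, the objective of the gadget's top player would be perturbed by $\theta(\PXQSATObjVariable)$ and by the penalty $\PXQSATAuxVarPen_1$. These couple to the gadget's variable $x_1$ through inequalities that are inactive for the lower levels, as the factor $6$ was used in the attainability proof. When the formula fails or $\PXQSATTruthAssignment_1$ is fractional, the supremum at $x_1=1$ is shifted to be unreachable, and the second player's problem loses attainment. This requires the gadget to occupy levels $2$ through $5$, which explains $k \ge 5$.

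I expect the main obstacle to be this coupling. It must make attainment switch on and off correctly using only lower-level cost terms and constraints that are not linking, since everything must be moved into the last player via Proposition~\ref{prop:constraint-forwarding}. It must also keep all variables in $[0,1]$ with polynomial integer coefficients, and it must ensure the penalty mechanism for fractional $\PXQSATTruthAssignment_1$ (Lemma~\ref{lemma:ValueOfQLP}(iii)) also destroys attainment rather than merely worsening the value. I would first try to feed $\theta(\PXQSATObjVariable)+2\PXQSATAuxVarPen_1$ additively into the gadget's second-level objective with a small weight and verify the case analysis as in Cases~1--3 of Theorem~\ref{theorem:hardness:attainability}.
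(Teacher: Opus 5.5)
Your arguments for (i) and (ii) follow the paper's own proofs. In (i) you add ``objective $\le 0$'' as a first-level linking constraint to Jeroslow's instance. In (ii) you let a follower choose a scale $s\ge 1$ for the right-hand side, so that, by Lemma~\ref{lemma:homogeneous-klp-without-linking-constraints}, a negative value becomes unboundedness; you also make explicit the attainment needed in the false case. Part (iii), however, has a genuine gap.

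In (iii) you identify the right target (the second player's problem is attainable iff $s_1$ is binary and $H(s_1)$ is true) and why $k\ge 5$ is needed, but you give no construction. The mechanism you sketch is transplanted from Theorem~\ref{theorem:hardness:attainability}, where the player whose attainment is tested sits \emph{above} the whole Jeroslow evaluation and can read off $\theta(t)$. That does not carry over here. $H(s_1)=\forall\mu_2\exists\mu_3\forall\mu_4F$ starts with a universal block, and $s_2$ must be chosen by the second player itself, the very player whose attainability is supposed to encode the answer. At that level $t$ only reports the truth of the remainder for the second player's \emph{own} choice of $s_2$. So there is no quantity $\theta(t)$ indicating $H(s_1)$ to perturb by. Your proposal never says how the second player's choice of $s_2$ is driven, or why the existence of a falsifying $s_2$ should destroy attainment.

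Your concrete first attempt also fails. The gadget term $x_3-x_1$ is decoupled from the Jeroslow variables, and its infimum $-1$ is never attained. Whenever it enters additively next to terms not depending on $x_1$, every feasible point can be strictly improved by changing $x_1$ alone. The second player's problem is then never attained, whatever $\theta(t)$ and $q_1$ are. Attainment can only be switched by a max-type coupling through a last-level slack $y$, whose lower bounds never bind the other followers (the trick you allude to with the factor $6$).

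The missing idea is to make the universal player's success exactly what lets a competing term in such a max reach the gadget's unattainable infimum. The paper places two decoupled copies of levels $2$--$5$ under the first player, each with its own Jeroslow evaluation and its own gadget.

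\emph{First copy.} The slack satisfies $6(y-1/2)\ge -1$ and $6(y-1/2)\ge x_3-x_1+q_1-1$. The infimum of $6(y-1/2)$ is then $-1$, and it is attained iff $q_1=0$ is available, i.e.\ iff $s_1$ is binary.

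\emph{Second copy.} The second player acts as the $\forall\mu_2$ player. It minimizes the maximum of $x_3-x_1\in(-1,0]$ and its own Jeroslow term. That term is shifted to equal $-1$ exactly when the player picks a binary $s_2$ (so $q_2=0$) falsifying $\exists\mu_3\forall\mu_4F(s_1,s_2,\cdot)$, and to be $\ge 0$ otherwise. If a falsifying $s_2$ exists, the infimum $-1$ is unattained. Otherwise the value $0$ is attained at $x_1=0$.

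With a zero first-level objective, the instance is feasible iff some $s_1$ makes both copies attain, i.e.\ iff $H$ is true.
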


\begin{proof}[Proof of Theorem~\ref{theorem:klp-hardness}]
\begin{enumerate}
\item
We only show $k = 2$ since the proof for the general case is analogous.
Consider a \DecisionProblemModifierBoxedPoly{\KLPInstances{2}} instance obtained by adding a linking constraint ``objective $\le 0$'' to \EqrefSATBLP{2}{1}{(\PXSATBooleanFormula)}.
The resulting instance is feasible if and only if $\PXSATBooleanFormula$ is satisfiable, implying the claim.

\item
Let $k \ge 2$ and $\PXQSATQBF \in \PXQSATQBFSet{k-1}$.
Let $(A, b, c)$ be a \DecisionProblemModifierNoLinkPoly{\KLPInstances{k}} instance whose optimal objective value is $-1$ if $\PXQSATQBF$ \PXQSATIsTrue{} and $0$ otherwise.
Such an instance can be obtained by modifying \EqrefNewQLPOriginal{k}{1}{(\PXQSATQBF)}.
Consider
\begin{equation}
\label{eq:feasibility-k-lp-without-linking}
\inf_{x', s'} \left\{ 0 :
\begin{pmatrix}
x' \\ s'
\end{pmatrix}
\in
\Argmin_{x, s \ge 1} \left\{
\sum_{j = 1}^{k} c^{\top} x_{j}
:
\begin{pmatrix}
x_1 \\ \vdots \\ x_{k}
\end{pmatrix}
\in \FeasSolSet\EqrefGeneralKLP{k}{1}{(A, s b, c)} \right\}
\right\}.
\end{equation}
Instance~\eqref{eq:feasibility-k-lp-without-linking} is a rational \DecisionProblemModifierNoLinkPoly{\KLPInstances{(k+1)}} instance.
In light of Lemma~\ref{lemma:homogeneous-klp-without-linking-constraints}, it is feasible if and only if $\PXQSATQBF \not\in \PXQSATQBFSet{k-1}$.
Thus, \DecisionProblemModifierNoLinkPoly{\DecisionProblemFeas{k}} is \ClassPiP{k - 2}-hard for $k \ge 3$.

\item
We prove for the case $k=5$.
Extension to $k \ge 6$ is straightforward.

Let $\PXQSATQBF \in \PXQSATQBFSet{4}$.
Consider the following instance:
\begin{align}
\inf_{\substack{
\PXQSATTruthAssignment{}_1,
\PXQSATTruthAssignment{}_2, \PXQSATTruthAssignment{}_3, \PXQSATTruthAssignment{}_4, \PXQSATObjVariable, \PXQSATAuxVarOne, \PXQSATAuxVarPen, \PXQSATAuxVarTwo\\\PXQSATX{}_1, \PXQSATX{}_2, \PXQSATX{}_3, \PXQSATX{}_4, \PXQSATY{},\\
\PXQSATTruthAssignment{}_2', \PXQSATTruthAssignment{}_3', \PXQSATTruthAssignment{}_4', \PXQSATObjVariable', \PXQSATAuxVarOne', \PXQSATAuxVarPen', \PXQSATAuxVarTwo'\\\PXQSATX{}_1', \PXQSATX{}_2', \PXQSATX{}_3', \PXQSATX{}_4', \PXQSATY{}'
}}
\left\{
0 :
\begin{array}{l}
\ZeroVector \le \PXQSATTruthAssignment{}_1 \le \OneVector,
\\
(\PXQSATTruthAssignment{}_2, \PXQSATTruthAssignment{}_3, \PXQSATTruthAssignment{}_4, \PXQSATObjVariable, \PXQSATAuxVarOne, \PXQSATAuxVarPen, \PXQSATAuxVarTwo, \PXQSATX{}_1, \PXQSATX{}_2, \PXQSATX{}_3, \PXQSATX{}_4, \PXQSATY{})
\\
\qquad\qquad\qquad\qquad\qquad\qquad
\in \OptSolSet\eqref{eq:HardFeasDecoupled},
\\
(\PXQSATTruthAssignment{}_2', \PXQSATTruthAssignment{}_3', \PXQSATTruthAssignment{}_4', \PXQSATObjVariable', \PXQSATAuxVarOne', \PXQSATAuxVarPen', \PXQSATAuxVarTwo', \PXQSATX{}_1', \PXQSATX{}_2', \PXQSATX{}_3', \PXQSATX{}_4', \PXQSATY{}')
\\
\qquad\qquad\qquad\qquad\qquad\qquad
\in \OptSolSet\eqref{eq:HardFeasDecoupledTwo}
\end{array}
\right\},
\tag{\EqtagHardFeas{5}{1}{(\PXQSATQBF)}}
\label{eq:HardFeas}
\end{align}
where
\begin{align}
\inf_{\substack{\PXQSATTruthAssignment{}_2, \PXQSATTruthAssignment{}_3, \PXQSATTruthAssignment{}_4, \PXQSATObjVariable, \PXQSATAuxVarOne, \PXQSATAuxVarPen, \PXQSATAuxVarTwo\\\PXQSATX{}_1, \PXQSATX{}_2, \PXQSATX{}_3, \PXQSATX{}_4, \PXQSATY{}}}
\ &
6 (\PXQSATY{} - 1/2)
\label{eq:HardFeasDecoupled}
\tag{\EqtagHardFeasDecoupled{5}{2}{(\PXQSATTruthAssignment{}_1, \PXQSATQBF)}}
\\
\text{s.t.} \ &
6 (\PXQSATY{} - 1/2) \ge -1,
\notag
\\
&
6 (\PXQSATY{} - 1/2) \ge \PXQSATX{}_3-\PXQSATX{}_1 + \PXQSATAuxVarPen{}_1 - 1,
\notag
\\
&
(\PXQSATTruthAssignment{}_3, \PXQSATTruthAssignment{}_4, \PXQSATObjVariable, \PXQSATAuxVarOne, \PXQSATAuxVarPen, \PXQSATAuxVarTwo)
\in \OptSolSet\EqrefNewQLPOriginal{5}{3}{(\PXQSATTruthAssignment{}_1, \PXQSATTruthAssignment{}_2, \PXQSATQBF)},
\notag
\\
&
(\PXQSATX{}_2, \PXQSATX{}_3, \PXQSATX{}_4)
\in \OptSolSet\eqref{eq:LabelExampleOneSecondPlayersProblem},
\notag
\end{align}
and
\begin{align}
\inf_{\substack{\PXQSATTruthAssignment{}_2, \PXQSATTruthAssignment{}_3, \PXQSATTruthAssignment{}_4, \PXQSATObjVariable, \PXQSATAuxVarOne, \PXQSATAuxVarPen, \PXQSATAuxVarTwo\\\PXQSATX{}_1, \PXQSATX{}_2, \PXQSATX{}_3, \PXQSATX{}_4, \PXQSATY{}}}
\ &
6 (\PXQSATY{} - 1/2)
\label{eq:HardFeasDecoupledTwo}
\tag{\EqtagHardFeasDecoupledTwo{5}{2}{(\PXQSATTruthAssignment{}_1, \PXQSATQBF)}}
\\
\text{s.t.} \ &
6 (\PXQSATY{} - 1/2) \ge \PXQSATObjVariable + 2 \PXQSATAuxVarPen{}_2 - 1,
\notag
\\
&
6 (\PXQSATY{} - 1/2) \ge \PXQSATX{}_3-\PXQSATX{}_1,
\notag
\\
&
(\PXQSATTruthAssignment{}_3, \PXQSATTruthAssignment{}_4, \PXQSATObjVariable, \PXQSATAuxVarOne, \PXQSATAuxVarPen, \PXQSATAuxVarTwo)
\in \OptSolSet\EqrefNewQLPOriginal{5}{3}{(\PXQSATTruthAssignment{}_1, \PXQSATTruthAssignment{}_2, \PXQSATQBF)},
\notag
\\
&
(\PXQSATX{}_2, \PXQSATX{}_3, \PXQSATX{}_4)
\in \OptSolSet\eqref{eq:LabelExampleOneSecondPlayersProblem}.
\notag
\end{align}
As in the proof of Theorem~\ref{theorem:hardness:attainability}, we can rewrite \EqrefHardFeas{5}{2}{(\PXQSATQBF)} as a rational \DecisionProblemModifierNoLinkBoxedPoly{\KLPInstances{5}} instance.
Similar arguments to the proof of Theorem~\ref{theorem:hardness:attainability} can show:
\begin{itemize}
\item
If $\PXQSATTruthAssignment{}_1$ is not binary, $\OptSolSet\eqref{eq:HardFeasDecoupled}$ is empty;
\item
If $\PXQSATTruthAssignment{}_1 \in \{0, 1\}^{n_1}$ and $\PXQSATQBF(\PXQSATTruthAssignment{}_1)$ \PXQSATIsNotTrue{}, $\OptSolSet\eqref{eq:HardFeasDecoupledTwo}$ is empty;
\item
If $\PXQSATTruthAssignment{}_1 \in \{0, 1\}^{n_1}$ and $\PXQSATQBF(\PXQSATTruthAssignment{}_1)$ \PXQSATIsTrue{}, $\OptSolSet\eqref{eq:HardFeasDecoupled}$ and $\OptSolSet\eqref{eq:HardFeasDecoupledTwo}$ are nonempty.
\end{itemize}

Therefore, \eqref{eq:HardFeas} is feasible if and only if $\PXQSATQBF$ \PXQSATIsTrue{}.
Thus, \DecisionProblemQThreeSAT{4} $\le_l$ \DecisionProblemModifierNoLinkBoxedPoly{\DecisionProblemFeas{5}}, implying its \ClassSigmaP{4}-hardness.
\MyQED
\end{enumerate}
\end{proof}

\section{Extensions and Limitations}\label{sec:extensions_and_limitations}

In this section, we discuss several limitations of our results, as well as possible extensions.

First, we showed that deciding unboundedness of a $4$-level LP instance is \ClassPiP{2}-hard and belongs to \ClassSigmaP{3}.
Whether it is complete for either of these complexity classes remains open. A closer inspection of the proof of Theorem~\ref{theorem:membership} suggests that extending Lemma~\ref{lemma:EquivalentConditionsForFeasibility} to \KLPInstances{4} would imply \ClassPiP{2}-completeness. Such an extension, in turn, would follow from a generalization of the sensitivity analysis developed in Section~\ref{sec:sensitivity-of-bilevel-linear-program-without-linking-constraints}, in particular, Theorem~\ref{theorem:lipschitz-continuity-of-blp-value-function}, to trilevel LP. However, establishing these properties appears to require a deeper understanding of the structure and properties of trilevel LPs.

Second, this paper focuses on the search problem of computing the optimal objective value. A closely related problem is that of computing an optimal solution. A natural approach is as follows: first compute the optimal objective value. Then, iteratively refine the solution by adding a constraint enforcing that the objective value equals the computed optimum, and redefining the objective to minimize a selected component of $x_1$ (e.g., the first component). By resolving the modified problem, one can determine the value of that component in some optimal solution (or, in the case where the optimum is not attained, after applying a suitable perturbation). Repeating this procedure for each component, fixing values one by one, yields a complete optimal solution.

This algorithm terminates in finitely many steps and correctly computes an optimal solution, when one exists. Moreover, it runs in polynomial time if the number $\NVariables_1$ of the first player's variables is fixed. However, it is unclear whether the algorithm runs in time polynomial in  $\NVariables_1$ or in the encoding size of the input. 
In particular, each iteration (i.e., fixing a variable) introduces constraints with increasingly large coefficients, potentially causing superpolynomial growth in the encoding size.


Third, our computational complexity results depend on how the $k$-level problem is formulated. 
Following \MyCitet{Jeroslow}{Jeroslow1985}, in our formulation, each player selects an optimal solution whenever one exists.
If, for example, the second player's optimum is unattainable, the corresponding first player's decision is considered infeasible.
Another common formulation is based on $\epsilon$-optimal solution sets, where followers (i.e., the second through $k$-th players) are indifferent among feasible solutions whose suboptimality is below a prescribed threshold. 
Under this setting, deciding attainability under conditions~\ref{Condition1}--\ref{Condition3} becomes polynomial-time solvable. 
A more comprehensive study of alternative formulations remains an interesting direction for future research.

Finally, on the positive side, our arguments extend naturally to multilevel mixed-binary LP.
A $k$-level mixed-binary LP can be transformed into a $(k+1)$-level LP by introducing an additional player who evaluates the fractionality of variables and linking constraints enforcing binary constraints~\MyCite{Sugishita2026}.
For example, this shows that the search problem for a $k$-level mixed-binary LP is in \ClassFDeltaP{k+1}.
Hardness follows by modifying the argument in Section~\ref{sec:hardnessproofs:searchproblems:kgethree}, together with the pure-binary instance of \MyCitet{Jeroslow}{Jeroslow1985}.
A summary of these results is provided in Table~\ref{tab:mixedbinary}.
Since the proofs are either straightforward or closely parallel those for multilevel LP, they are omitted.
These results strengthen that of \MyCitet{Rodrigues \EtAl{}}{RodriguesEtAl2024}, who show that deciding the unboundedness of a $k$-level mixed-binary LP with linking constraints is \ClassSigmaP{k}-hard.
As in multilevel LP, the computational complexity of \DecisionProblemFeas{k} exhibits a jump from $k = 2$ to $k \ge 3$ for mixed-binary multilevel LPs without linking constraints, assuming the polynomial hierarchy does not collapse.
We note that special care is required when studying the pure-binary case.
It is straightforward to show that the search problem for a $k$-level pure-binary LP is \ClassFDeltaP{k+1}-complete.
However, when the input instances are restricted to have coefficients of polynomial magnitude (condition~\ref{Condition3}), the search problem belongs to \ClassFThetaP{k+1} (also denoted as $\text{FP}^{\Sigma^p_{k}[\log n]}$), since the number of iterations required by binary search is logarithmically bounded.

\begin{table}
\centering
\caption{Computational complexity of decision and search problems for mixed-binary multilevel LPs. All results remain valid even when the input instances are restricted to have coefficients of polynomial magnitude.}
\label{tab:mixedbinary}
\footnotesize
\setlength\extrarowheight{0.2em}
\setlength{\tabcolsep}{2pt}
\begin{NiceTabular}{ccccccc}
\CodeBefore
\Body
\toprule
& & & \multicolumn{2}{c}{With Linking Constraints} & \multicolumn{2}{c}{Without Linking Constraints} \\
\cmidrule(lr){4-5}
\cmidrule(lr){6-7}
& \multicolumn{1}{c}{$k$} & & \multicolumn{1}{c}{Unbounded $x$} & \multicolumn{1}{c}{Bounded $x$} & \multicolumn{1}{c}{Unbounded $x$} & \multicolumn{1}{c}{Bounded $x$} \\
\midrule
\DecisionProblemVal{k}
& $\ge 2$ & 
\cite{Sugishita2026}
& \ClassSigmaP{k}-complete & \ClassSigmaP{k}-complete & \ClassSigmaP{k}-complete & \ClassSigmaP{k}-complete \\
\midrule
\multirow[m]{2}{*}{\DecisionProblemUnb{k}}
& $2$ & 
& 
\ClassSigmaP{2}-complete
& - & 
\ClassNP{}-hard
& - \\
& $\ge 3$ && \ClassSigmaP{k}-complete & - & \ClassSigmaP{k}-complete & - \\
\midrule
\multirow[m]{2}{*}{\DecisionProblemFeas{k}}
& $2$ && \ClassSigmaP{2}-complete & \ClassSigmaP{2}-complete & 
\ClassNP{}-complete
& 
\ClassNP{}-complete
\\
& $\ge 3$ && \ClassSigmaP{k}-complete & \ClassSigmaP{k}-complete & \ClassSigmaP{k}-complete & \ClassSigmaP{k}-complete \\
\midrule
\multirow[m]{1}{*}{\DecisionProblemAttain{k}}
& $\ge2$ & & $\ClassDeltaP{k + 1}$-complete & $\ClassDeltaP{k + 1}$-complete & $\ClassDeltaP{k + 1}$-complete & $\ClassDeltaP{k + 1}$-complete \\
\midrule
\SearchProblemObj{k}
& $\ge 2$ & & \ClassFDeltaP{k + 1}-complete & \ClassFDeltaP{k + 1}-complete & \ClassFDeltaP{k + 1}-complete & \ClassFDeltaP{k + 1}-complete \\
\bottomrule
\CodeAfter
\end{NiceTabular}
\end{table}

\section{Conclusions}\label{sec:Conclusions}

In this paper, we have presented a comprehensive analysis of the computational complexity of multilevel LP, focusing on the fundamental problems of feasibility, existence of optimal solutions, and computation of optimal objective values. Our results provide a unified view of how these problems are situated within the polynomial hierarchy.

A central insight of our study is that the computational complexity of multilevel LP is highly sensitive to both the number of levels~$k$ and the structural properties of the model, such as the presence of linking constraints and unbounded variables.
In particular, we established that the feasibility problem is $\Sigma^p_{k-1}$-complete for all $k \ge 2$ in the general case, while exhibiting polynomial-time solvability under restrictive assumptions when $k$ is small.
The sharp transition in complexity (assuming the polynomial hierarchy does not collapse) between $k=4$ and $k=5$ in the absence of linking constraints and unbounded variables highlights a threshold phenomenon that, to the best of our knowledge, has not been previously identified in multilevel optimization.
We observed a similar phenomenon in the attainability decision problem for multilevel LP.

In addition, we showed that computing the optimal objective value is $\mathrm{F}\Delta^p_k$-complete for all $k \ge 2$, even under simplifying assumptions. This result suggests that, from a computational perspective, the search version of multilevel LP remains intractable across all $k \ge 2$, and does not admit the same types of tractability results observed for certain decision problems in restricted settings.

Beyond these complexity classifications, our results yield several structural implications. 
Notably, they establish limitations on polynomial-time transformations between formulations with and without linking constraints, unless widely believed complexity-theoretic assumptions fail. 
At the same time, they identify regimes (in particular, for sufficiently large~$k$) where such transformations become possible for feasibility.
However, they do not preserve optimal objective values, as they concern only the associated decision problems.
These findings contribute to a more nuanced understanding of the modeling trade-offs inherent in multilevel LP.

\appendix

\AppendixHeader

\bibliographystyle{siamplain}
\bibliography{references}

\end{document}